\numberwithin{equation}{section}
\numberwithin{figure}{section}
\newtheorem {theorem}{Theorem}[section]
\newtheorem {proposition}[theorem]{Proposition}
\newtheorem {lemma}[theorem]{Lemma}
\newtheorem {corollary}[theorem]{Corollary}
\theoremstyle{definition}
\newtheorem {remark}[theorem]{Remark}
\newcommand{\Vol}{\operatorname{Vol}}
\def\ba{\begin{array}}
\def\ea{\end{array}}
\def\bea{\begin{eqnarray} \label}
\def\eea{\end{eqnarray}}
\def\be{\begin{equation} \label}
\def\ee{\end{equation}}
\def\bit{\begin{itemize}}
\def\eit{\end{itemize}}
\def\ben{\begin{enumerate}}
\def\een{\end{enumerate}}
\def\E{\mathbb{E}}
\def\N{\mathbb{N}}
\def\P{\mathbb{P}}
\def\R{\mathbb{R}}
\def\RRd1{\mathbb{R}^{d+1}}
\def\SSd{\mathbb{S}^d}
\def\bS{\mathbb{S}}
\def\cA{\mathcal{A}}
\def\cC{\mathcal{C}}
\def\cF{\mathcal{F}}
\def\cK{\mathcal{K}}
\def\cM{\mathcal{M}}
\def\cN{\mathcal{N}}
\def\cP{\mathcal{P}}
\def\cS{\mathcal{S}}
\def\dint{\textup{d}}
\newcommand{\eee}{{\rm e}}
\newcommand{\ind}{\mathbbm{1}}
\newcommand{\eps}{\varepsilon}
\newcommand{\pos}{\mathop{\mathrm{pos}}\nolimits}
\newcommand{\aff}{\mathop{\mathrm{aff}}\nolimits}
\newcommand{\conv}{\mathop{\mathrm{conv}}\nolimits}
\newcommand{\sconv}{\mathop{\mathrm{sconv}}\nolimits}
\newcommand{\dd}{{\rm d}}
\newcommand{\bsl}{\backslash}
\DeclareMathOperator{\relint}{relint}
\newcommand{\dist}{\mathop{\mathrm{dist}}\nolimits}
\newcommand{\Sd}{{\mathbb S}^{d-1}}
\begin{document}

\title{\bfseries Cones generated by random points on half-spheres\\ and convex hulls of Poisson point processes}

\author{Zakhar Kabluchko, Alexander Marynych, Daniel Temesvari and Christoph Th\"ale}

\date{}

\maketitle

\begin{abstract}
Let $U_1,U_2,\ldots$ be random points sampled uniformly and independently from the $d$-dim\-en\-sional upper half-sphere.  We show that, as $n\to\infty$, the $f$-vector of the $(d+1)$-dimensional convex cone $C_n$ generated by $U_1,\ldots,U_n$  weakly converges to a certain limiting random vector, without any normalization. We also show convergence of all moments of the $f$-vector of $C_n$ and identify the limiting constants for the expectations. We prove that the expected Grassmann angles of $C_n$ can be expressed through the expected $f$-vector. This yields convergence of expected Grassmann angles and conic intrinsic volumes and answers thereby a question of B\'ar\'any, Hug, Reitzner and Schneider [Random points in halfspheres, \textit{Rand. Struct. Alg.}, 2017]. Our approach is based on the observation that the random cone $C_n$ weakly converges, after a suitable rescaling, to a random cone whose intersection with the tangent hyperplane of the half-sphere at its north pole is the convex hull of the Poisson point process with power-law intensity function proportional to $\|x\|^{-(d+\gamma)}$, where $\gamma=1$. We compute the expected number of facets, the expected intrinsic volumes and the expected $T$-functional of this random convex hull for arbitrary $\gamma>0$.
\noindent
\bigskip
\\
{\bf Keywords}. Blaschke--Petkantschin formula, conic intrinsic volume, convex cone, convex hull, $f$-vector, random polytope, Poisson point process, spherical integral geometry.\\
{\bf MSC 2010}. Primary: 52A22, 60D05; Secondary: 52A55, 52B11, 60F05.
\end{abstract}

\tableofcontents

\section{Introduction}

The study of random convex hulls has a long tradition in convex and stochastic geometry; see Chapter 8 in~\cite{SW08} as well as~\cite{hug_survey,MCR10,schneider_polytopes} for overviews. Motivated by Sylvester's four-point problem, the modern development started with the works of R\'enyi and Sulanke~\cite{renyi_sulanke1,renyi_sulanke2} on random polygons in the plane that are generated as convex hulls of random points uniformly distributed in a fixed convex set. Random polytopes of this type in general space dimensions $d\geq 1$ were studied intensively, for example by B\'ar\'any, Reitzner, Sch\"utt~\cite{Barany89,ReitznerCombinatorialStructure,SchuettRandomPolytopes}, to name just a few. One of the functionals that attracted particular interest was the so-called \textit{$f$-vector}, that is, the vector whose $k$-th entry is the number of $k$-dimensional faces, $k\in\{0,1,\ldots d-1\}$. In particular, if a random polytope $K_n$ is generated by $n\geq d+1$ independent random points that are uniformly distributed in a convex body $K\subset\R^d$ with (sufficiently) smooth boundary, the expected number $\E f_k(K_n)$ of $k$-dimensional faces of $K_n$ asymptotically behaves like
$$
\E f_k(K_n) \sim c_{k,d}\Omega(K)n^{\frac{d-1}{d+1}},
$$
as $n\to\infty$. Here, $c_{k,d}\in(0,\infty)$ is a constant that only depends on $k$ and $d$, and $\Omega(K)$ is the affine surface area of $K$; see~\cite{ReitznerCombinatorialStructure}. On the other hand, if $K$ itself is a polytope, the expected number of $k$-dimensional faces of $K_n$ grows like
$$
\E f_k(K_n) \sim c_{k,d}'T(K)(\log n)^{d-1},
$$
as $n\to\infty$, with a different constant $c'_{k,d}\in(0,\infty)$ and with $T(K)$ being the number of towers of the polytope $K$; see again~\cite{ReitznerCombinatorialStructure}.

Recently, B\'ar\'any, Hug, Reitzner and Schneider~\cite{barany_etal} investigated the $f$-vector, the spherical volume and some other quantities for the spherical convex hull of $n$ uniformly distributed random points on the $d$-dimensional upper half-sphere. Among other results, they showed that the expected number of facets (i.e.\ $(d-1)$-dimensional faces) and the expected number of vertices and edges of such spherical random polytopes tend to finite constants, as $n\to\infty$. This surprising result is the starting point for our work in which we consider the $(d+1)$-dimensional random convex cone generated by such random convex hulls; see Figure~\ref{fig}. Our first main result (Theorem~\ref{theo:conv_of_polytopes}) is a weak limit theorem for the sections of these random cones with the tangent hyperplane of the half-sphere at its north pole. We shall identify the limiting random polytope as the convex hull of a Poisson point process in the tangent hyperplane with a power-law intensity function. This in turn leads to limit theorems for the whole $f$-vector (Theorem~\ref{theo:conv_of_faces} and Theorem \ref{theo:ConvergenceOfExpectations}) and the volume (Theorem~\ref{theo:conv_of_angle}) of the spherical convex hull on a half-sphere, which complements the findings in~\cite{barany_etal}. In addition, our weak limit theorem allows us to describe the expectation asymptotics of the conic intrinsic volumes (in fact, all three versions of them) of the induced random cone. This solves in an extended form a conjecture posed by B\'ar\'any, Hug, Reitzner and Schneider; see Section 9 in \cite{barany_etal}.  We also study separately the expected so-called $T$-functional of the convex hull of a general class of Poisson point processes in $\R^d$ with a power-law intensity function $\|x\|^{-(d+\gamma)}$; see Theorem~\ref{theo:T_expect}. Here, $\gamma>0$ is a parameter and $\|x\|$ is the Euclidean norm of $x$. In particular, we compute explicitly the expected volume (and, more generally, expected intrinsic volumes) and the expected number of facets of this random polytope, thus generalizing a two-dimensional result of~\citet{Davis+Mulrow+Resnick:1987}.

\medbreak

The paper is structured as follows. In Section~\ref{subsec:HalfSphere} we first rephrase the relevant results from~\cite{barany_etal} and introduce the random convex cones for which various limit theorems are presented in Sections~\ref{subsec:WeakLimiTheorems} and~\ref{subsec:conic_intrinsic}. Convex hulls of Poisson point processes with a power-law intensity function are the content of Section~\ref{subsec:ConvexHullPPP}. In order to keep the paper reasonably self-contained we have collected some background material needed in our arguments in Section~\ref{sec:BackgroundMaterial}. The proofs of our main results are contained in Sections~\ref{sec:ProofsWeakLimitTheorems}, \ref{proofs:conic} and~\ref{sec:ProofsPPP}, while Section~\ref{sec:AuxLemmas} collects some auxiliary lemmas.

\section{Main results}

\subsection{Convex hulls on the half-sphere}\label{subsec:HalfSphere}

We fix a dimension $d\geq 1$ and let $U_1,U_2,\ldots$ be independent random points distributed according to the uniform distribution on the $d$-dimensional \textit{upper half-sphere}
$$
\mathbb S_+^{d} := \{(x_0,x_1,\ldots,x_d) \in \R^{d+1} \colon x_0^2+x_1^2+\ldots+x_d^2=1,\,x_0\geq 0\}.
$$
We are interested in the random convex cone in $\R^{d+1}$ defined as the \textit{positive hull} of $U_1,\ldots,U_n$, $n\geq d+1$, that is
$$
C_n = \pos \{U_1,\ldots,U_n\} := \{\alpha_1 U_1+\ldots + \alpha_n U_n \colon \alpha_1,\ldots, \alpha_n\geq 0\};
$$
see Figure~\ref{fig}. As already discussed in the previous section, the random cone, or, more precisely, the random spherical polytope $C_n \cap \mathbb S^{d}_+$, has been studied by~\citet{barany_etal}. Some of their results concern the expected $f$-vector of $C_n$, that is, the expected number $\E f_k(C_n)$ of $k$-dimensional faces of $C_n$, $k\in\{1,\ldots,d\}$. The $f$-vector of the cone $C_n$ is related to the $f$-vector of the spherical polytope $C_n \cap \mathbb S^{d}_+$ by $f_{k}(C_n) = f_{k-1} (C_n \cap \mathbb S^{d}_+)$. For our purposes, it is more convenient to work with cones rather than with spherical polytopes.  By~\cite[Theorem 3.1]{barany_etal} the expected number of facets $\E f_{d}(C_n)$ of $C_n$  is explicitly given by
\begin{equation}\label{eq:barany_fd}
\E f_{d} (C_n) = \frac {2 \omega_d}{\omega_{d+1}} \binom nd \int_0^\pi \left(1-\frac \alpha \pi\right)^{n-d} \sin^{d-1}\alpha \,\dd \alpha.
\end{equation}
Moreover, it has been shown in~\cite[Theorem~3.1]{barany_etal} that
\begin{equation}\label{eq:barany_fd_asympt}
\lim_{n\to\infty} \E f_{d} (C_n) = 2^{-d}d! \kappa_d^2.
\end{equation}
Here and below, $\kappa_d$ denotes the volume of the $d$-dimensional unit ball, whereas $\omega_d$ is the $(d-1)$-dimensional Hausdorff measure (surface area) of the unit sphere $\mathbb S^{d-1}\subset \R^d$, that is,
$$
\kappa_d = \frac{\pi^{d/2}}{\Gamma(\frac d2 + 1)}
\qquad\text{and}\qquad
\omega_d = d \kappa_d =\frac{2\pi^{\frac{d}{2}}}{\Gamma\left(\frac{d}{2}\right)}.
$$
Regarding the expected number of one-dimensional faces of $C_n$ (or, equivalently, vertices of $C_n \cap \mathbb S^d_+$), \cite[Theorem 7.1]{barany_etal} says that
\begin{equation}\label{eq:barany_f1}
\lim_{n\to\infty} \E f_1 (C_n) = C(d) \pi^{d+1} \left(\frac 2 {\omega_{d+1}}\right)^{d+1} \omega_d
\end{equation}
for a certain constant $C(d)$ given in form of a multiple integral; see~\cite[Equation~(22)]{barany_etal}.  Let us also mention that cones generated by random points with uniform distribution on the whole sphere $\bS^d$ were studied by \cite{cover_efron} and~\cite{HS15}.

\subsection{Weak convergence of the random cone and its consequences}\label{subsec:WeakLimiTheorems}

\subsubsection{The weak convergence theorem}\label{subsec:weak_conv_thm}

\begin{figure}[t]
\begin{center}
\includegraphics[width=0.6\textwidth]{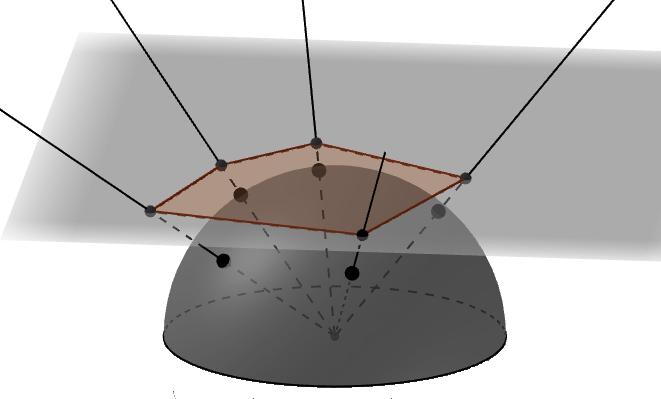}
\end{center}
\caption{Illustration of random points on the half-sphere $\mathbb{S}_+^d$, the cone $C_n\subset\R^{d+1}$ and the intersection $C_n\cap H_1$ for $d=2$ and $n=5$.}
\label{fig}
\end{figure}

In what follows, we shall present a weak limit theorem for the random cone $C_n$. It is clear that, for large $n$, the cone $C_n$ is close to the half-space $\{x_0>0\}$, so that in order to obtain a non-trivial limit for $C_n$ we need an appropriate rescaling. This is achieved by the linear operator $T_n:\R^{d+1} \to \R^{d+1}$ defined by
$$
T_n(x_0,x_1,\ldots,x_d) := (n x_0, x_1,\ldots, x_d).
$$
Let $H_1$ be the hyperplane $\{x_0=1\}$ in $\R^{d+1}$. Note that $H_1$ is tangent to the half-sphere $\mathbb S^d_+$ at its north pole.  Let $e_0$ be the unit vector $(1,0,\ldots,0)\in\R^{d+1}$ pointing to the north pole. We shall prove that the random convex polytope $(T_n C_n \cap H_1) - e_0$, which can be viewed as the ``horizontal'' section of the cone $T_nC_n$, converges in distribution on the space of compact convex subsets of $H_1-e_0$ that we identify with $\R^d$; see Section~\ref{sec:BackgroundMaterial} below for some background material on this notion of convergence.

To describe the limit, take some $\gamma > 0$, $c > 0$,  and let $\Pi_{d,\gamma}(c)$ be a Poisson point process on $\R^d \bsl \{0\}$ whose intensity measure is absolutely continuous with respect to the Lebesgue measure and whose density function is given by
\begin{equation}\label{eq:density}
x\mapsto \frac c {\omega_{d+\gamma}} \frac{1}{\|x\|^{d+\gamma}},\quad x\in \R^d \bsl \{0\},
\end{equation}
where $\|x\|$ is the Euclidean norm of $x$; see Figure~\ref{fig_PPP}. Again, we refer to Section~\ref{sec:BackgroundMaterial} for background material concerning Poisson point processes. Note that the number of points of $\Pi_{d,\gamma}(c)$ outside any ball centered at the origin having strictly positive radius is almost surely finite (because the intensity is integrable near $\infty$), while the number of points inside any such ball is infinite with probability one (because the integral of the intensity over such balls diverges).  We denote by $\conv \Pi_{d,\gamma}(c)$ the convex hull of all points of $\Pi_{d,\gamma}(c)$. Even though $\Pi_{d,\gamma}(c)$ almost surely consists of infinitely many points, the random convex set $\conv \Pi_{d,\gamma}(c)$ turns out to be almost surely a polytope; see Corollary~\ref{cor:polytope} below. The next theorem identifies the weak limit of the rescaled random polytopes $(T_nC_n\cap H_1)-e_0$ in terms of a Poisson point process of the type just discussed.

\begin{theorem}\label{theo:conv_of_polytopes}
As $n\to\infty$, the random polytopes $(T_n C_n \cap H_1) - e_0$  converge in distribution to $\conv \Pi_{d,1}(2)$ on the space of compact convex subsets of $\R^d$ endowed with the Hausdorff metric.
\end{theorem}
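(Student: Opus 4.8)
The plan is to reduce the statement to a convergence of point processes and then to a continuity property of the convex-hull map. First I would compute the horizontal section explicitly. Writing each generator as $U_i=(\cos\theta_i,\sin\theta_i\,\omega_i)$ with $\theta_i\in[0,\pi/2]$ and $\omega_i\in\Sd$, the ray $\pos\{T_nU_i\}$ meets $H_1$ in the point $(1,n^{-1}\tan\theta_i\,\omega_i)$, and since $T_n$ is linear one obtains the identity $(T_nC_n\cap H_1)-e_0=\conv\{P_1,\dots,P_n\}$ with $P_i:=n^{-1}\tan\theta_i\,\omega_i\in\R^d$. Thus it suffices to analyse the binomial point process $\beta_n:=\{P_1,\dots,P_n\}$. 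A change of variables from the uniform law on $\mathbb S^{d}_+$ (whose $(\theta,\omega)$-density is $\tfrac{2}{\omega_{d+1}}\sin^{d-1}\theta$) shows that each $P_i$ has Lebesgue density $x\mapsto\tfrac{2}{\omega_{d+1}}\,n^d(1+n^2\|x\|^2)^{-(d+1)/2}$, so the intensity measure of $\beta_n$ has density $x\mapsto\tfrac{2}{\omega_{d+1}}\,n^{d+1}(1+n^2\|x\|^2)^{-(d+1)/2}$, which converges pointwise on $\R^d\bsl\{0\}$ to $\tfrac{2}{\omega_{d+1}}\|x\|^{-(d+1)}$, exactly the density \eqref{eq:density} with $\gamma=1$ and $c=2$.

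Next I would upgrade this to vague convergence of intensity measures on the locally compact space $\R^d\bsl\{0\}$: on $\{\|x\|\ge\delta\}$ the densities above are dominated by the integrable function $\tfrac{2}{\omega_{d+1}}\|x\|^{-(d+1)}$, so dominated convergence gives $n\,\P(P_1\in\cdot)\to\mu$ vaguely, where $\mu$ is the intensity of $\Pi_{d,1}(2)$ (a Radon measure away from the origin, though not near it). By the classical criterion for convergence of binomial processes to a Poisson process, it follows that $\beta_n$ converges in distribution to $\Pi_{d,1}(2)$ in the space of locally finite counting measures on $\R^d\bsl\{0\}$ equipped with the vague topology.

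The remaining, and most delicate, task is to pass from this to Hausdorff convergence of the convex hulls. The convex-hull map is not vaguely continuous, for two reasons: the limit $\Pi_{d,1}(2)$ has infinitely many points accumulating at the origin, and points could a priori escape to infinity. I would resolve both by truncation. For $0<\delta<R$ set $\conv_{\delta,R}(\eta):=\conv(\eta\cap\{\delta\le\|x\|\le R\})$; restricted to the compact annulus this is a continuous functional at every configuration charging neither bounding sphere, so by the continuous mapping theorem $\conv_{\delta,R}(\beta_n)$ converges in distribution to $\conv_{\delta,R}(\Pi_{d,1}(2))$. Since $\Pi_{d,1}(2)$ has almost surely finitely many points outside any ball and, its intensity being rotationally symmetric and infinite at $0$, contains the origin in the interior of its convex hull, one has $\conv_{\delta,R}(\Pi_{d,1}(2))=\conv\Pi_{d,1}(2)$ a.s.\ as soon as $R$ exceeds the largest norm and $\delta$ lies below the inradius $\rho:=\dist(0,\partial\conv\Pi_{d,1}(2))>0$; letting $\delta\to0$, $R\to\infty$ therefore gives $\conv_{\delta,R}(\Pi_{d,1}(2))\to\conv\Pi_{d,1}(2)$ a.s. Finally I would control the discrepancy between $\conv_{\delta,R}(\beta_n)$ and the full hull $\conv(\beta_n)$: a point of $\beta_n$ beyond radius $R$ occurs with probability at most $n\,\P(\|P_1\|>R)\to\tfrac{2\omega_d}{\omega_{d+1}R}$, while the inner points (of norm $<\delta$) do not alter the hull on the event $B(0,\delta)\subseteq\conv_{\delta,R}(\beta_n)$, whose probability tends to one because the limit hull contains $B(0,\rho)$. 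Both error probabilities vanish in the iterated limit, so the standard approximation lemma for weak convergence (Theorem~3.2 in Billingsley, \emph{Convergence of Probability Measures}) yields $\conv(\beta_n)\Rightarrow\conv\Pi_{d,1}(2)$, which is the assertion.

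The main obstacle is precisely this last step, since the vague topology sees neither the origin nor infinity: one must argue separately that the infinitely many inner points are absorbed by the hull (using that the origin is an interior point of the limit) and that no mass escapes to infinity (using the uniform tail bound on the intensities). I expect the interior-point property and the uniform-in-$n$ tightness at infinity to demand the most care, whereas the density computation and the vague convergence of intensities are routine.
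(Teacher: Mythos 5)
Your proposal is correct in substance and shares the paper's overall architecture: the same gnomonic-projection identity $(T_nC_n\cap H_1)-e_0=\conv\{n^{-1}\mathcal{P}(U_i)\colon i=1,\ldots,n\}$, and the same point-process limit $\sum_{i}\delta_{n^{-1}\mathcal{P}(U_i)}\Rightarrow\Pi_{d,1}(2)$ (your ``classical criterion'' for binomial-to-Poisson convergence is exactly Proposition~3.21 of Resnick, which the paper invokes after Lemma~\ref{lem:lemma_reg_var}). Where you genuinely diverge is the passage from point-process convergence to Hausdorff convergence of the hulls. The paper applies the Skorokhod representation theorem to obtain almost sure vague convergence and then invokes a purely deterministic continuity statement, Lemma~\ref{lem:basic_convergence}, formulated on the one-point compactification $\R^d\cup\{\infty\}\setminus\{0\}$: under the half-space condition (a) the origin is interior to the limit hull, so both the limit hull and, eventually, the prelimit hulls are determined by the finitely many atoms outside a small ball, and those atoms converge. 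Your route stays on $\R^d\setminus\{0\}$, applies the continuous mapping theorem to the truncated hulls $\conv_{\delta,R}$, and controls the two truncation errors in probability --- the union bound $n\,\P(\|P_1\|>R)\to 2\omega_d/(\omega_{d+1}R)$ at infinity, and the event $B(0,\delta)\subseteq\conv_{\delta,R}(\beta_n)$ near the origin --- before gluing with Billingsley's approximation theorem. Both arguments ultimately rest on the same two facts (the origin is a.s.\ interior to $\conv\Pi_{d,1}(2)$; the intensity has an integrable tail), so the difference is one of packaging: your version avoids the compactification and the Skorokhod step, but it yields only Hausdorff convergence, whereas the paper's deterministic lemma simultaneously delivers convergence of the $f$-vectors, which is then reused for Theorems~\ref{theo:conv_of_faces} and~\ref{theo:conv_of_angle}. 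Two details in your sketch need buttoning up, both routine: the interior-origin claim should be justified by the argument that a.s.\ every open half-space with $0$ on its boundary contains points of the process (this is hypothesis (a) of Lemma~\ref{lem:basic_convergence}, packaged as Corollary~\ref{cor:polytope}), and the lower bound on $\P(B(0,\delta)\subseteq\conv_{\delta,R}(\beta_n))$ requires the portmanteau inequality for the \emph{open} set $\{K\in\cK^d\colon \overline{B(0,\delta)}\subset\operatorname{int}K\}$ rather than the closed inclusion event as written.
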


Let us briefly explain the idea behind Theorem~\ref{theo:conv_of_polytopes}. Define the map $\mathcal{P}:\mathbb{S}^{d}_{+}\cap \{x_0>0\}\to \mathbb{R}^{d}$ by the equality
\begin{align}\label{eq:DefMappingP}
\mathcal{P}(x_0,x_1,\ldots,x_d)=\bigg({x_1\over x_0},\ldots,{x_d\over x_0}\bigg).
\end{align}
The rays in directions $U_1,\ldots,U_n$ intersect $H_1$ at the points $(1,\mathcal P(U_1)),\ldots,(1,\mathcal P(U_n))$. Therefore, the polytope $C_n\cap H_1-e_0$ is the convex hull of $\mathcal P(U_1),\ldots,\mathcal P(U_n)$. The next proposition describes the density according to which these points are distributed. The result is a consequence of \cite[Proposition 4.2]{BesauWerner} and, in a more general set-up, has been proved in the argument of \cite[Theorem 7]{bonnet_etal}.

\begin{proposition}\label{prop:cauchy}
Let $(\xi_0,\ldots,\xi_d)$ be a random vector distributed uniformly on the half-sphere $\mathbb S^d_+$. Then, the vector $\mathcal P(\xi_0,\xi_1,\ldots,\xi_d) := (\xi_1/\xi_0,\ldots,\xi_d/\xi_0)$ has the following generalized Cauchy density
$$
x\mapsto {2\over\omega_{d+1}}\frac{1}{(1+\|x\|^{2})^{\frac{d+1}2}}, \qquad x\in\R^d.
$$
\end{proposition}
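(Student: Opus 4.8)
The plan is to recognise $\mathcal P$ as the gnomonic (central) projection of the open upper half-sphere onto its tangent hyperplane at the north pole, and then to obtain the density of $\mathcal P(\xi)$ by a change-of-variables argument. Since the equator $\{x_0=0\}$ carries no $d$-dimensional surface measure, the uniform distribution on $\mathbb S^d_+$ is concentrated on the open half-sphere $\mathbb S^d_+\cap\{x_0>0\}$, on which $\mathcal P$ is a diffeomorphism onto all of $\R^d$ with inverse
\[
\Phi(x):=\mathcal P^{-1}(x)=\frac{(1,x_1,\ldots,x_d)}{\sqrt{1+\|x\|^2}},\qquad x\in\R^d .
\]
The uniform density on $\mathbb S^d_+$ with respect to the $d$-dimensional Hausdorff (surface) measure is the constant $2/\omega_{d+1}$, because $\mathbb S^d\subset\R^{d+1}$ has surface area $\omega_{d+1}$ and the half-sphere carries exactly half of it. The problem thus reduces to computing the Jacobian factor $J(x)$ by which $\Phi$ transforms Lebesgue measure on $\R^d$ into surface measure on the sphere; the sought density is then $(2/\omega_{d+1})\,J(x)$.

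The main computation is therefore $J(x)=\sqrt{\det g(x)}$, where $g_{ij}(x)=\langle\partial_i\Phi(x),\partial_j\Phi(x)\rangle$ is the Riemannian metric pulled back by $\Phi$. Writing $\rho=\sqrt{1+\|x\|^2}$ and differentiating $\Phi=\rho^{-1}(1,x)$, I obtain
\[
\partial_i\Phi=-\rho^{-3}x_i\,(1,x)+\rho^{-1}(0,e_i),
\]
where $e_i$ is the $i$-th standard basis vector of $\R^d$. A short inner-product calculation, using $\langle(1,x),(1,x)\rangle=\rho^2$, yields the rank-one perturbation of the identity
\[
g_{ij}=\rho^{-2}\bigl(\delta_{ij}-\rho^{-2}x_ix_j\bigr).
\]
By the matrix determinant lemma, $\det(I-\rho^{-2}xx^\top)=1-\rho^{-2}\|x\|^2=\rho^{-2}$, since $\|x\|^2=\rho^2-1$. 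Hence $\det g=\rho^{-2d}\cdot\rho^{-2}=\rho^{-2(d+1)}$ and $J(x)=\rho^{-(d+1)}=(1+\|x\|^2)^{-(d+1)/2}$. Multiplying by the constant density $2/\omega_{d+1}$ gives precisely the claimed generalized Cauchy density.

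The only genuine obstacle is the metric/Jacobian computation, and even that is routine once one exploits that $g$ is a rank-one update of a multiple of the identity, so that no hard analysis is required; the normalization $\int_{\R^d}(2/\omega_{d+1})(1+\|x\|^2)^{-(d+1)/2}\,\dd x=1$ is then automatic, being inherited from the total mass of the uniform law under the measure-preserving bijection $\Phi$. Alternatively, one could bypass the differential-geometric bookkeeping altogether by invoking \cite[Proposition 4.2]{BesauWerner} or the argument of \cite[Theorem 7]{bonnet_etal}, but the direct Jacobian computation has the advantage of being fully self-contained.
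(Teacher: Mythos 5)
Your proof is correct. Note, however, that the paper does not actually prove Proposition~\ref{prop:cauchy} at all: it simply cites \cite[Proposition 4.2]{BesauWerner} and the argument of \cite[Theorem 7]{bonnet_etal}. So your argument is a genuinely different (and self-contained) route. What you do: identify $\mathcal P$ as the gnomonic projection, write down its inverse $\Phi(x)=(1,x)/\sqrt{1+\|x\|^2}$, and compute the pulled-back metric $g_{ij}=\rho^{-2}(\delta_{ij}-\rho^{-2}x_ix_j)$ with $\rho=\sqrt{1+\|x\|^2}$; the rank-one structure and the matrix determinant lemma give $\det g=\rho^{-2(d+1)}$, hence the Jacobian $(1+\|x\|^2)^{-(d+1)/2}$, and multiplying by the constant uniform density $2/\omega_{d+1}$ yields the claim. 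I checked the details: the exclusion of the equator is legitimate (it is a $\sigma$-null set), the derivative $\partial_i\Phi=-\rho^{-3}x_i(1,x)+\rho^{-1}(0,e_i)$ and the resulting inner products are right, and $\det(I-\rho^{-2}xx^\top)=1-\rho^{-2}\|x\|^2=\rho^{-2}$ is correct, so the area formula gives exactly the stated density. What each approach buys: the paper's citation is shorter and places the result in the broader context of beta$^{\prime}$-distributions and general projections of spheres onto tangent hyperplanes, which the authors exploit later (e.g.\ when invoking results from \cite{beta_polytopes}); your computation costs half a page but makes the proposition independent of external references, and as you note it delivers the normalization for free since $\Phi$ transports the uniform probability measure. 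Either is acceptable; yours would be a reasonable substitute proof in the paper.
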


Note that this density belongs to the class of beta$^{\prime}$-distributions. Convex hulls of samples from these distributions were studied in~\cite{beta_polytopes}. In particular, the formula for the number of facets of this convex hull obtained in~\cite[Proposition~3.16]{beta_polytopes} contains~\eqref{eq:barany_fd} as a special case. Let us turn to the large $n$ asymptotics. Since the above density is regularly varying at $\infty$, see Lemma~\ref{lem:lemma_reg_var} in Section~\ref{sec:AuxLemmas}, standard methods from extreme-value theory imply that the point process formed by the points $\mathcal P(U_1)/n, \ldots, \mathcal P(U_n)/n$ converges weakly to the Poisson point process $\Pi_{d,1}(2)$ in the space of locally-finite integer measures on  $\R^d \bsl \{0\}$ endowed with the vague topology. Using the continuous mapping theorem, we shall argue that the convex hull of $\mathcal P(U_1)/n, \ldots, \mathcal P(U_n)/n$ converges weakly to the convex hull of the Poisson point process, thus proving Theorem~\ref{theo:conv_of_polytopes}.

\begin{figure}[t]
\begin{center}
\includegraphics[width=0.45\textwidth]{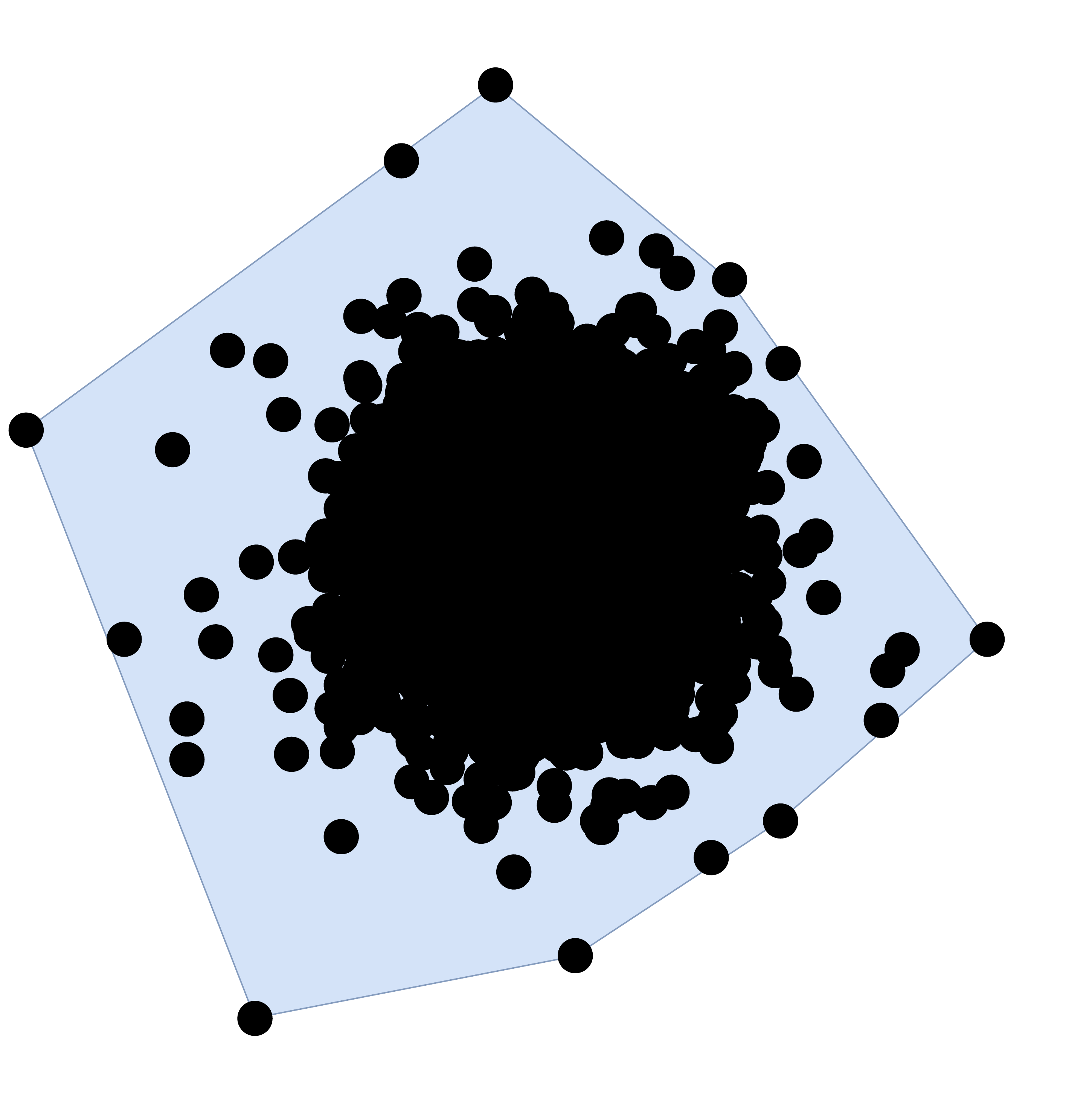}
\includegraphics[width=0.45\textwidth]{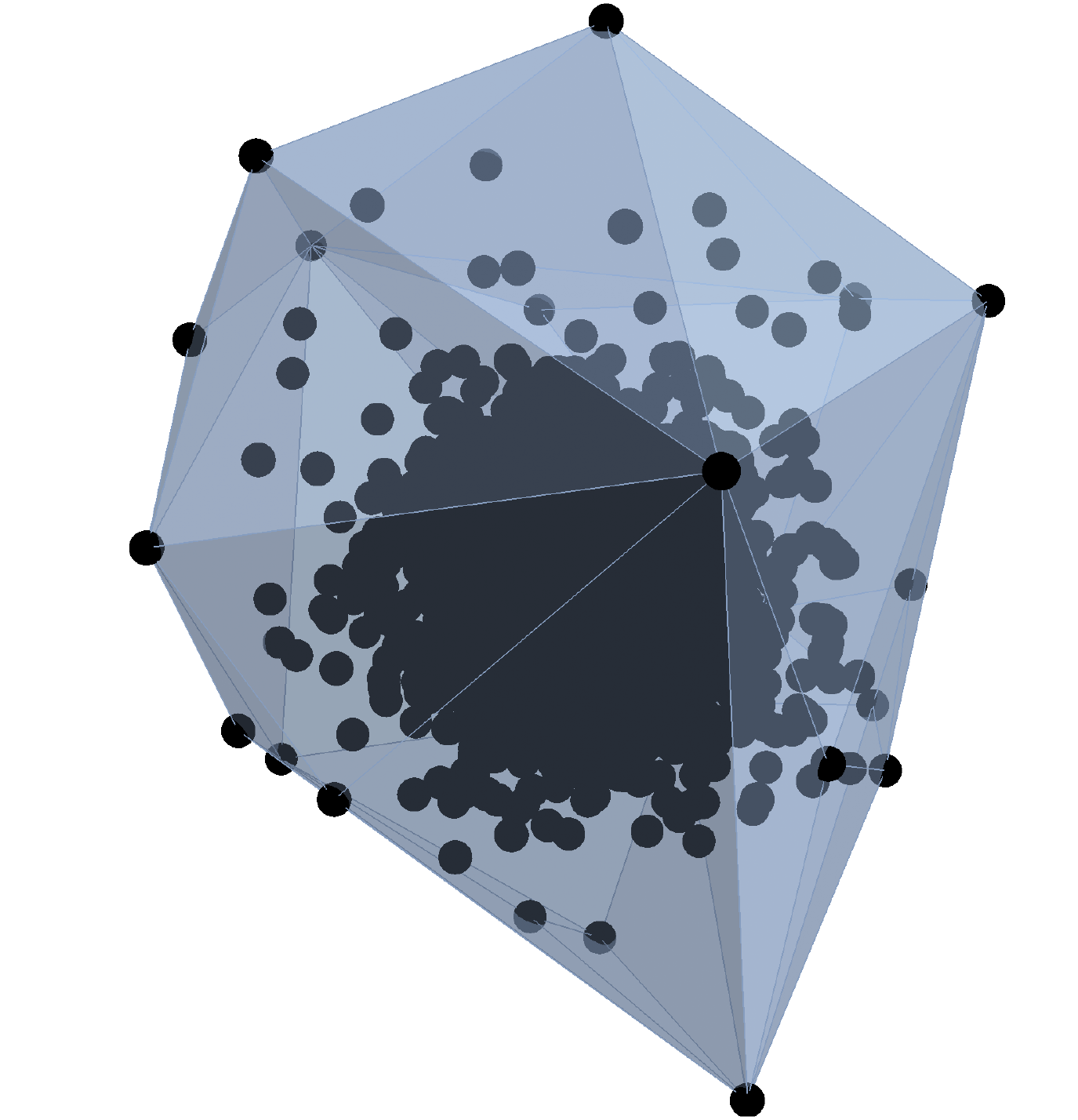}
\end{center}
\caption{Convex hull of the Poisson point process with intensity $\|x\|^{-7}$ in $d=2$ (left) and $d=3$ (right) dimensions.}
\label{fig_PPP}
\end{figure}

\subsubsection{Convergence of the \texorpdfstring{$f$}{f}-vector}

With the help of the continuous mapping theorem we shall now derive a number of consequences of Theorem~\ref{theo:conv_of_polytopes}.  For a Euclidean or spherical $d$-dimensional polytope $P$, we denote by $f_k(P)$ the number of $k$-dimensional faces of $P$, where $k\in\{0,\ldots,d-1\}$.  The collection ${\bf f}(P):=(f_0(P), \ldots, f_{d-1}(P))$ is the $f$-vector of $P$.  From Theorem~\ref{theo:conv_of_polytopes} we shall derive the following result on the distributional convergence of the $f$-vector of the random spherical polytope $C_n \cap \mathbb S^d_+$. We remind the reader that $f_{k}(C_n \cap \mathbb S^d_+)= f_{k+1}(C_n)$.

\begin{theorem}\label{theo:conv_of_faces}
As $n\to\infty$, we have that
$$
{\bf f}(C_n \cap \mathbb S^d_+) \overset{{\rm d}}{\longrightarrow} {\bf f}(\conv \Pi_{d,1}(2)),
$$
where $\overset{{\rm d}}{\longrightarrow}$ denotes convergence in distribution.
\end{theorem}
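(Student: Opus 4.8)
The plan is to deduce the statement from the point-process convergence underlying Theorem~\ref{theo:conv_of_polytopes}, working at the level of the points rather than of the sets. First I would record the combinatorial identities that reduce everything to a single Euclidean convex hull. Since $T_n$ is a linear isomorphism it preserves the face lattice of the cone, whence $f_{k+1}(C_n)=f_{k+1}(T_nC_n)$; since $T_nC_n$ is pointed and $H_1$ is transversal to it, intersecting with $H_1$ gives a dimension-shifting bijection of faces, so $f_{k+1}(T_nC_n)=f_k(T_nC_n\cap H_1)$; and translation by $-e_0$ is harmless. Combined with the relation $f_k(C_n\cap\mathbb{S}^d_+)=f_{k+1}(C_n)$, this yields $\mathbf{f}(C_n\cap\mathbb{S}^d_+)=\mathbf{f}(\conv\mathcal N_n)$, where $\mathcal N_n:=\sum_{i=1}^n\delta_{\mathcal P(U_i)/n}$ is the empirical process whose convex hull is $(T_nC_n\cap H_1)-e_0$. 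The discussion following Proposition~\ref{prop:cauchy} already gives $\mathcal N_n\to\Pi:=\Pi_{d,1}(2)$ in distribution in the vague topology on locally finite measures on $\R^d\bsl\{0\}$, so it suffices to prove $\mathbf{f}(\conv\mathcal N_n)\overset{\mathrm d}{\longrightarrow}\mathbf{f}(\conv\Pi)$. The \emph{main obstacle} is that the map ``configuration $\mapsto f$-vector of its hull'' is not continuous for the Hausdorff or vague topology (a polytope is a Hausdorff limit of polytopes with arbitrarily many vertices), so Theorem~\ref{theo:conv_of_polytopes} alone does not transport the integer-valued $\mathbf{f}$.

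The remedy is that the limit is almost surely in general position. I would isolate a full-probability event on which: (i) $\conv\Pi$ is a polytope (Corollary~\ref{cor:polytope}) with finitely many vertices; (ii) $\Pi$ is in general position, no $d+1$ of its points lying on a common hyperplane, so $\conv\Pi$ is simplicial and, crucially, its combinatorial type is \emph{locally constant} under perturbations of the finitely many extreme points; (iii) the origin lies in $\operatorname{int}\conv\Pi$, so $B(0,\rho)\subset\operatorname{int}\conv\Pi$ for some $\rho>0$; and (iv) the vertices satisfy $0<r_0\le\|v\|\le R_0<\infty$. Properties (i)–(ii) hold almost surely because the intensity~\eqref{eq:density} is absolutely continuous, so every fixed hyperplane is $\Pi$-null; property (iii) holds because in each open cone with apex at the origin the intensity has infinite total mass, so the points of $\Pi$ cannot be confined to any half-space.

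To transport the convergence I would pass to a coupling on which $\mathcal N_n\to\Pi$ vaguely almost surely, and show that on the good event $\mathbf{f}(\conv\mathcal N_n)=\mathbf{f}(\conv\Pi)$ for all large $n$ (integer sequences converge iff eventually constant). A routine first-moment estimate, $\E[\mathcal N_n(\{\|x\|>R\})]=n\,\P(\|\mathcal P(U_1)\|>nR)\to\Pi(\{\|x\|>R\})$ by the regular variation of the Cauchy density of Proposition~\ref{prop:cauchy}, with the right-hand side tending to $0$ as $R\to\infty$, confines with probability approaching one all points of $\mathcal N_n$ to a fixed large ball, ruling out escape to infinity (the boundedness in Theorem~\ref{theo:conv_of_polytopes} gives the same). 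Fixing $r\in(0,\min(r_0,\rho))$ with $\Pi(\{\|x\|=r\})=0$, I split the points into an inner part in $B(0,r)$ and an outer part $\mathcal N_n^{\mathrm{out}}:=\mathcal N_n\cap\{\|x\|>r\}$. On a fixed relatively compact annulus with $\Pi$-null boundary, vague convergence of simple point measures forces, for large $n$, a bijection between $\mathcal N_n^{\mathrm{out}}$ and $\Pi\cap\{\|x\|>r\}$ along which points converge. As $\conv(\Pi\cap\{\|x\|>r\})=\conv\Pi$ (all vertices have norm $>r$) and the limit is in general position, local constancy of the combinatorial type gives $\mathbf{f}(\conv\mathcal N_n^{\mathrm{out}})=\mathbf{f}(\conv\Pi)$ for large $n$, and in particular $\conv\mathcal N_n^{\mathrm{out}}\to\conv\Pi$ in the Hausdorff metric.

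It remains to absorb the inner points. Because $B(0,\rho)\subset\operatorname{int}\conv\Pi$ and $\conv\mathcal N_n^{\mathrm{out}}\to\conv\Pi$ in Hausdorff distance, eventually $\overline{B(0,r)}\subset\operatorname{int}\conv\mathcal N_n^{\mathrm{out}}$, so every inner point lies in the interior of $\conv\mathcal N_n^{\mathrm{out}}$ and is not a vertex of $\conv\mathcal N_n$. Hence $\conv\mathcal N_n=\conv\mathcal N_n^{\mathrm{out}}$ and $\mathbf{f}(\conv\mathcal N_n)=\mathbf{f}(\conv\Pi)$ for all large $n$, giving almost-sure and therefore distributional convergence; by the first paragraph this is exactly $\mathbf{f}(C_n\cap\mathbb{S}^d_+)\overset{\mathrm d}{\longrightarrow}\mathbf{f}(\conv\Pi_{d,1}(2))$. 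The crux is the interplay of the last two steps: general position makes $\mathbf{f}$ locally constant, while the separation of scales—no points at infinity, inner points swallowed by the interior—ensures that the finitely many relevant extreme points converge as an honest finite configuration, which is precisely what the bare Hausdorff convergence of Theorem~\ref{theo:conv_of_polytopes} cannot by itself provide.
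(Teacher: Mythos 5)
Your proposal is, in substance, the paper's own proof: the paper packages the deterministic core of your third and fourth paragraphs as Lemma~\ref{lem:basic_convergence} (if configurations converge vaguely and the limit charges every half-space through $0$, has atoms in general position and no mass at infinity, then the finitely many outer atoms converge as a finite configuration, the inner atoms are absorbed because the hull contains a ball around the origin, and hence both the hulls and their $f$-vectors converge), and then proves Theorem~\ref{theo:conv_of_faces} exactly as you do: the identity $f_{k-1}(C_n\cap\mathbb{S}^d_+)=f_k(C_n)=f_{k-1}(\conv\{n^{-1}\mathcal{P}(U_i)\})$, the point-process convergence \eqref{eq:resnick_conv_of_pp}, a Skorokhod coupling, the deterministic lemma applied pointwise, and back to the original space. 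Your observation that Hausdorff convergence of the hulls alone cannot transport the $f$-vector is precisely the reason the paper proves $f$-vector convergence at the level of point configurations.

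The one step that does not hold as written is your control at infinity. Your coupling gives $\mathcal{N}_n\to\Pi$ vaguely a.s.\ on $\R^d\setminus\{0\}$, where compact sets are bounded away from $0$ \emph{and} from $\infty$; such convergence is silent about points of $\mathcal{N}_n$ far from the origin, and your claimed bijection between all of $\mathcal{N}_n^{\mathrm{out}}$ and $\Pi\cap\{\|x\|>r\}$ (hence the a.s.\ eventual equality of $f$-vectors) requires exactly that no such stray points exist. Your first-moment estimate only yields $\P\bigl(\mathcal{N}_n(\{\|x\|>R\})\geq 1\bigr)\lesssim 1/R$ uniformly in large $n$; this is an in-probability bound, not summable in $n$, so there is no Borel--Cantelli upgrade to ``a.s.\ eventually confined to $B_R$'', and the parenthetical appeal to Theorem~\ref{theo:conv_of_polytopes} does not help either, since a.s.\ Hausdorff convergence of the hulls is not available on the coupling you built (it is equivalent to the very confinement in question). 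Two standard repairs exist. The paper's: work from the start in $\cN_{\R^d\cup\{\infty\}\setminus\{0\}}$, which is the setting of Lemma~\ref{lem:basic_convergence} and of the Resnick result behind \eqref{eq:resnick_conv_of_pp}; there $\{\|x\|>r\}\cup\{\infty\}$ is relatively compact, so a.s.\ vague convergence together with $\Pi(\{\infty\})=0$ forces the outer points of $\mathcal{N}_n$ to be exactly as many as those of $\Pi$ and to converge to them, with no escape to infinity. Alternatively, a truncation: run your a.s.\ argument for the restrictions $\mathcal{N}_n|_{B_R}$ and $\Pi|_{B_R}$ (valid a.s.\ since the sphere $\{\|x\|=R\}$ is $\Pi$-null), conclude $\mathbf{f}(\conv(\mathcal{N}_n|_{B_R}))\overset{{\rm d}}{\longrightarrow}\mathbf{f}(\conv(\Pi|_{B_R}))$ for each fixed $R$, and then remove the truncation using your $O(1/R)$ bounds on both processes, letting $n\to\infty$ and then $R\to\infty$. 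With either fix, your argument is complete and coincides with the paper's.
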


We shall argue also that the \textit{expected} $f$-vector of the spherical random polytope $C_n \cap \mathbb S^d_+$ converges to that of $\conv \Pi_{d,1}(2)$. Even more generally, we shall prove the convergence of moments of all orders. This generalizes the results from \cite{barany_etal} discussed above and answers -- in an extended form -- a question raised in~\cite[Section~9]{barany_etal}. Let us write $\aff\{x_1,\ldots,x_k\}$ for the affine hull of the points $x_1,\ldots,x_k$.

\begin{theorem}\label{theo:ConvergenceOfExpectations}
For every $k\in \{1,\ldots,d\}$ and every $m\in\N$ we have
$$
\lim_{n\to\infty} \E f_k^m (C_n) = \lim_{n\to\infty} \E f_{k-1}^m(C_n \cap \mathbb S^d_+) =  \E f_{k-1}^m (\conv \Pi_{d,1}(2)).
$$
For $m=1$ the limits of the expectations are
$$
\lim_{n\to\infty} \E f_k (C_n) = \lim_{n\to\infty} \E f_{k-1}(C_n \cap \mathbb S^d_+) = \E f_{k-1} (\conv \Pi_{d,1}(2)) = \frac {2}{k!} B_{k,d},
$$
where  $B_{1,d},\ldots,B_{d,d}$ are constants given by
\begin{align}\label{eq:DefBk}
B_{k,d} = {1\over 2}\Big({2\over\omega_{d+1}}\Big)^{k}\int_{(\R^d)^{k}}\P(\conv \Pi_{d,1}(2)\cap\aff\{x_1,\ldots,x_{k}\} = \varnothing)\prod_{i=1}^{k}{\dd x_i\over\|x_i\|^{d+1}}<\infty.
\end{align}
\end{theorem}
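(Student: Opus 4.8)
The plan is to upgrade the distributional convergence of Theorem~\ref{theo:conv_of_faces} to convergence of moments by a uniform integrability argument, and then to evaluate the limiting first moment explicitly through the multivariate Mecke equation. Since $f_k(C_n)=f_{k-1}(C_n\cap\mathbb S^d_+)$, the middle equality in the statement is a tautology, so everything reduces to proving $\E f_{k-1}^m(C_n\cap\mathbb S^d_+)\to\E f_{k-1}^m(\conv\Pi_{d,1}(2))$ and, for $m=1$, identifying the limit with $\frac{2}{k!}B_{k,d}$. For the first part I would invoke the standard fact that weak convergence together with uniform integrability of the $m$-th powers yields convergence of the $m$-th moments. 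Uniform integrability follows once I establish that $\sup_n\E f_{k-1}^{\,p}(C_n\cap\mathbb S^d_+)<\infty$ for every $p\in\N$.

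To get these bounds it suffices to control a single functional, the facet number. By Proposition~\ref{prop:cauchy} the points $\mathcal P(U_1),\ldots,\mathcal P(U_n)$ have an absolutely continuous law, hence are almost surely in general position, so $C_n\cap\mathbb S^d_+$ is a.s.\ simplicial. For a simplicial $d$-polytope every $(k-1)$-face lies in a facet and each facet is a $(d-1)$-simplex with exactly $\binom{d}{k}$ faces of dimension $k-1$, whence $f_{k-1}(C_n\cap\mathbb S^d_+)\le\binom{d}{k}f_{d-1}(C_n\cap\mathbb S^d_+)=\binom{d}{k}f_d(C_n)$. Thus it remains to prove $\sup_n\E f_d(C_n)^p<\infty$ for all $p$.

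This uniform bound on the moments of the facet number is the step I expect to be the main obstacle. I would write $f_d(C_n)=\sum_{|I|=d}\ind\{F_I\}$, where $F_I$ is the event that the $d$ rays indexed by $I$ span a facet of $C_n$, i.e.\ the linear hyperplane spanned by $\{U_i:i\in I\}$ has all remaining points on one side, and then pass to factorial moments, grouping the sum over $m$-tuples $(I_1,\ldots,I_m)$ of $d$-subsets according to the incidence pattern of $I_1\cup\cdots\cup I_m$. After the projective change of variables of Proposition~\ref{prop:cauchy} and the rescaling $x\mapsto x/n$, each contribution becomes an integral over $\ell=|I_1\cup\cdots\cup I_m|$ points against the beta$'$-density; exploiting its regular variation at infinity (Lemma~\ref{lem:lemma_reg_var}) one checks that every incidence pattern contributes a quantity bounded uniformly in $n$, the leading (pairwise disjoint) pattern converging to the corresponding expression for $\Pi_{d,1}(2)$. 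The careful bookkeeping over all overlap patterns and the uniform-in-$n$ domination of the associated Blaschke--Petkantschin integrals are where the technical difficulty concentrates; this simultaneously delivers finiteness of the limit and the required uniform integrability.

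Finally I would evaluate $\E f_{k-1}(\conv\Pi_{d,1}(2))$. The key is the combinatorial characterization that $k$ points $x_1,\ldots,x_k$ in general position span a $(k-1)$-face of $\conv(\Pi_{d,1}(2)\cup\{x_1,\ldots,x_k\})$ if and only if $\aff\{x_1,\ldots,x_k\}\cap\conv\Pi_{d,1}(2)=\varnothing$: projecting onto the orthogonal complement of this flat converts the disjointness into a point-versus-convex-body separation, which is precisely the existence of a supporting hyperplane through the flat, using that $\conv\Pi_{d,1}(2)$ is a.s.\ a compact polytope with the origin in its interior by Corollary~\ref{cor:polytope}. Writing $f_{k-1}(\conv\Pi_{d,1}(2))=\frac1{k!}\sum_{(x_1,\ldots,x_k)\in\Pi^k_{\ne}}\ind\{x_1,\ldots,x_k\text{ span a }(k-1)\text{-face}\}$ and applying the multivariate Mecke equation with intensity measure $\mu(\dd x)=\frac{2}{\omega_{d+1}}\|x\|^{-(d+1)}\dd x$ yields
$$
\E f_{k-1}(\conv\Pi_{d,1}(2))=\frac1{k!}\Big(\frac{2}{\omega_{d+1}}\Big)^{k}\int_{(\R^d)^k}\P\big(\conv\Pi_{d,1}(2)\cap\aff\{x_1,\ldots,x_k\}=\varnothing\big)\prod_{i=1}^{k}\frac{\dd x_i}{\|x_i\|^{d+1}},
$$
which equals $\frac{2}{k!}B_{k,d}$ by \eqref{eq:DefBk}; the finiteness $B_{k,d}<\infty$ is then inherited from the uniform moment bound established above.
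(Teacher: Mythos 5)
Your overall architecture is the same as the paper's: upgrade Theorem~\ref{theo:conv_of_faces} to moment convergence via uniform integrability, reduce uniform integrability to $\sup_n$-bounds on all moments of a single combinatorial functional, and identify the limiting expectation through the multivariate Mecke equation~\eqref{eq:MeckeEquation} together with the characterization that $\conv\{x_1,\ldots,x_k\}$ is a $(k-1)$-face if and only if $\aff\{x_1,\ldots,x_k\}$ misses the hull of the remaining points. Your reduction step is a legitimate variant: you bound $f_{k-1}$ by the facet number using almost-sure simpliciality, whereas the paper bounds $f_k(P)\le\binom{f_0(P)}{k+1}\le f_0^{k+1}(P)$ and works with the vertex number instead; both reductions are polynomially equivalent and either one suffices. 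The closing Mecke computation and the resulting formula $\frac{2}{k!}B_{k,d}$ are correct.

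However, there is a genuine gap at precisely the point you flag as ``the main obstacle'': the uniform-in-$n$ moment bound is asserted, not proved. Your plan --- factorial moments of the facet count, grouping by incidence patterns, then claiming ``every incidence pattern contributes a quantity bounded uniformly in $n$'' --- hides the actual difficulty. After rescaling by $n$, the relevant one-point intensity is $p_n(x)=\frac{2n^{d+1}}{\omega_{d+1}(1+n^2\|x\|^2)^{(d+1)/2}}$, whose limit $\frac{2}{\omega_{d+1}}\|x\|^{-(d+1)}$ is \emph{not integrable at the origin}; the total mass of $p_n$ near $0$ grows like $n$. Consequently the integrals attached to each incidence pattern are not dominated by any fixed integrable function: the only thing that tames them is the avoidance probability, i.e.\ the probability that a flat passing close to the origin still misses the rescaled hull $K_n/n$. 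Making that quantitative is exactly the content the proposal lacks. The paper supplies it by a separate chain of lemmas (Lemmas~\ref{lem:OriginInConvexCombination}--\ref{lem:NonabsorbtionCauchyPolytope_sphere}): an orthant-counting argument yielding the exponential estimate
$$
\P\Big(\tfrac{K_n}{n}\not\supset B_r(0)\Big)\le c_1\exp\Big(-\frac{1}{c_2 r + c_3 n^{-1}}\Big),
$$
which is then combined with the conditional-independence identity
$n^k\,\P(Y_1,\ldots,Y_k\notin K_n)=\E\big[\big(n\,\P(Y_1\notin K_n\,|\,K_n)\big)^k\big]$
and an explicit polar-coordinate estimate (via the random inradius $\theta_n=\min_{x\in\partial K_n}\|x\|$) to get the required $O(n^{-k})$ decay. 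Without this anti-concentration input --- or an equivalent quantitative bound on how unlikely it is that the rescaled hull fails to contain a fixed ball around the origin --- your ``uniform domination of the associated Blaschke--Petkantschin integrals'' cannot be carried out, and the uniform integrability (hence the whole theorem, including finiteness of $B_{k,d}$) remains unestablished.
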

\begin{remark}
We shall prove in Section~\ref{subsec:proof_exp_T_funct} that
\begin{equation}\label{eq:ConstantBd}
B_{d,d}=(2\pi)^{d-1}\Gamma\Big({d+1\over 2}\Big)^2.
\end{equation}
Together with Theorem~\ref{theo:ConvergenceOfExpectations} and Legendre's duplication formula this recovers Equation~(4) of~\citet{barany_etal} who proved that  $\lim_{n\to\infty} \E f_d(C_n)= 2^{-d} d! \kappa_d^2$. In Proposition~\ref{prop:B_2_d}, we shall compute the value of $B_{2,d}$, yielding the formula
$$
\lim_{n\to\infty} \E f_2(C_n) = B_{2,d} = \frac 12 \binom{d+1}{3} \pi^2.
$$
\end{remark}

\subsubsection{Convergence of the solid angle}

The next theorem deals with the solid angle of $C_n$. Let $\bar \sigma$ be the $d$-dimensional spherical Lebesgue measure on the unit sphere $\mathbb{S}^d\subset \R^{d+1}$ normalized such that $\bar \sigma(\mathbb{S}^d)=1$. The \textit{solid angle} $\alpha(C_n)$ of the convex cone $C_n$ is defined by
$$
\alpha(C_n):= \bar \sigma(C_n\cap \mathbb{S}^d).
$$
Clearly, we have that $\alpha(C_n)$ almost surely converges to $1/2$, as $n\to\infty$. Theorem 7.1 in~\cite{barany_etal} provides a more delicate asymptotic result, namely
\begin{align}\label{eq:ConstangCd2}
\E \left(\frac 12 - \alpha(C_n)\right) =
C(d) \pi^{d+1} \left(\frac 2 {\omega_{d+1}}\right)^{d+1} \frac{\omega_d}{\omega_{d+1}} \frac 1 n  + O(n^{-2}),
\end{align}
as $n\to\infty$, where $C(d)$ is the same constant as in~\eqref{eq:barany_f1}. The next theorem is a distributional counterpart to this formula.

\begin{theorem}\label{theo:conv_of_angle}
As $n\to\infty$, we have that
$$
n\left(\frac 12  - \alpha(C_n) \right) \overset{{\rm d}}{\longrightarrow}  \frac{1}{\omega_{d+1}} \int_{\R^d \backslash \conv\Pi_{d,1}(2)}  \frac{\dd x}{\|x\|^{d+1}}.
$$
\end{theorem}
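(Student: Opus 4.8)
The plan is to rewrite the solid-angle defect $\tfrac12-\alpha(C_n)$ as an integral over $\R^d$ and then pass to the limit after the rescaling underlying Theorem~\ref{theo:conv_of_polytopes}. Let $\Phi=\mathcal{P}^{-1}$ be the central projection sending $y\in\R^d$ to the point $(1,y)/\sqrt{1+\|y\|^2}$ of the open upper hemisphere, and write $\sigma=\omega_{d+1}\bar\sigma$ for the unnormalised spherical Lebesgue measure. Proposition~\ref{prop:cauchy} is equivalent to the change-of-variables identity $\sigma(\Phi(A))=\int_A(1+\|y\|^2)^{-(d+1)/2}\,\dd y$ for Borel $A\subseteq\R^d$. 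Setting $K_n:=(C_n\cap H_1)-e_0=\conv\{\mathcal{P}(U_1),\ldots,\mathcal{P}(U_n)\}$, a point of the open upper hemisphere lies in $C_n$ if and only if its image under $\mathcal{P}$ lies in $K_n$; since the equator is $\sigma$-null and $\bar\sigma(\mathbb{S}^d_+)=\tfrac12$, this gives
$$
\tfrac12 - \alpha(C_n) = \bar\sigma\big(\mathbb{S}^d_+\setminus C_n\big) = \frac{1}{\omega_{d+1}}\int_{\R^d\setminus K_n}\frac{\dd y}{(1+\|y\|^2)^{(d+1)/2}}.
$$

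Next I would substitute $y=nz$. Writing $K_n^{(n)}:=\tfrac1n K_n=\conv\{\mathcal{P}(U_1)/n,\ldots,\mathcal{P}(U_n)/n\}$, which is exactly $(T_nC_n\cap H_1)-e_0$, this yields
$$
n\Big(\tfrac12 - \alpha(C_n)\Big) = \frac{1}{\omega_{d+1}}\int_{\R^d\setminus K_n^{(n)}} h_n(z)\,\dd z, \qquad h_n(z):=\frac{n^{d+1}}{(1+n^2\|z\|^2)^{(d+1)/2}}.
$$
By Theorem~\ref{theo:conv_of_polytopes}, $K_n^{(n)}\overset{{\rm d}}{\longrightarrow}K_\infty:=\conv\Pi_{d,1}(2)$ in the Hausdorff metric, while $h_n(z)\to\|z\|^{-(d+1)}$ for every $z\neq0$. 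Thus the claimed limit is the value at $K_\infty$ of the functional $K\mapsto\frac{1}{\omega_{d+1}}\int_{\R^d\setminus K}\|z\|^{-(d+1)}\,\dd z$, and it remains to pass to the limit through a \emph{random} integration domain and a simultaneously varying, singular integrand.

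I would carry this out via the Skorokhod representation theorem, realising $K_n^{(n)}\to K_\infty$ almost surely in the Hausdorff metric on a common probability space. By Corollary~\ref{cor:polytope}, $K_\infty$ is almost surely a bounded polytope; moreover, since $\Pi_{d,1}(2)$ is isotropic and places points in every small cone with apex at the origin, $0$ lies almost surely in the interior of $K_\infty$, so fix $\rho>0$ with $\{\|z\|\le2\rho\}\subseteq K_\infty$. Comparing support functions, $d_H(K_n^{(n)},K_\infty)\le\rho$ forces $h_{K_n^{(n)}}\ge h_{K_\infty}-\rho\ge\rho$ in every direction, whence $\{\|z\|\le\rho\}\subseteq K_n^{(n)}$ for all large $n$ and the integration region lies in $\{\|z\|>\rho\}$ eventually. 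On this region the elementary bound $(1+n^2\|z\|^2)^{(d+1)/2}\ge n^{d+1}\|z\|^{d+1}$ gives $h_n(z)\le\|z\|^{-(d+1)}$, and $\|z\|^{-(d+1)}\ind_{\{\|z\|>\rho\}}$ is integrable. Since Hausdorff convergence yields $\ind_{\{z\notin K_n^{(n)}\}}\to\ind_{\{z\notin K_\infty\}}$ at every $z\notin\partial K_\infty$, that is for Lebesgue-a.e.\ $z$, dominated convergence gives, almost surely,
$$
\int_{\R^d\setminus K_n^{(n)}} h_n(z)\,\dd z \longrightarrow \int_{\R^d\setminus K_\infty}\frac{\dd z}{\|z\|^{d+1}},
$$
the right-hand side being finite because $\R^d\setminus K_\infty\subseteq\{\|z\|>2\rho\}$. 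Almost-sure convergence implies convergence in distribution, which is the assertion.

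The main obstacle is precisely this last step: the integration domain is random and the integrand $h_n$ is both $n$-dependent and singular at the origin (indeed $h_n(0)=n^{d+1}\to\infty$), so no continuous-mapping argument applies verbatim and no global integrable envelope exists. The resolution is the observation that $0$ is an \emph{interior} point of the limit $K_\infty$: through the support-function comparison this forces a fixed ball $\{\|z\|\le\rho\}$ into $K_n^{(n)}$ for all large $n$, which simultaneously excises the singularity of $h_n$ from the domain and supplies the uniform integrable dominating function $\|z\|^{-(d+1)}$ on the complementary region, making the dominated convergence argument legitimate.
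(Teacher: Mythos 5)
Your proposal is correct and follows essentially the same route as the paper: the same integral representation $n\bigl(\tfrac12-\alpha(C_n)\bigr)=\frac{1}{\omega_{d+1}}\int_{\R^d\setminus K_n^{(n)}}h_n(z)\,\dd z$ (the paper derives it probabilistically via an independent uniform point $U$, you via a change of variables, which is the same computation), followed by Skorokhod representation, pointwise convergence of the indicators off $\partial K_\infty$, and dominated convergence with the dominating function $\ind_{\{\|z\|>\rho\}}\|z\|^{-(d+1)}$ made available because $0$ is an interior point of the limit polytope (Corollary~\ref{cor:polytope}). Your support-function argument for the eventual inclusion $B_\rho(0)\subseteq K_n^{(n)}$ is a slightly more explicit version of the paper's use of the distance $r(\omega)$ from $0$ to $\partial L_0'$, but the proofs are otherwise identical in structure.
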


\subsection{Conic intrinsic volumes}\label{subsec:conic_intrinsic}
Next we consider the so-called conic intrinsic volumes of $C_n$ or, equivalently, the spherical intrinsic volumes of $C_n\cap\mathbb{S}_+^d$. In contrast to the classical intrinsic volumes in $\R^d$ there exist several notions of conic intrinsic volumes in the literature; see \cite{AmelunxenLotzDCG17,ALMT14} and, for equivalent formulations in the spherical setting, \cite{glasauer_phd} and~\cite[Section 6.5]{SW08}. If $C\subset\R^{d+1}$ is a polyhedral convex cone and $x\in\R^{d+1}$ we let $\Pi_C(x)$ be the metric projection of $x$ onto $C$, that is $\Pi_C(x)$ is the uniquely determined point $y\in C$ for which the squared Euclidean distance $\|x-y\|^2$ is minimal. If $g$ is a standard Gaussian random vector in $\R^{d+1}$ and $F\subseteq C$ is a face of $C$ with relative interior denoted by $\relint(F)$, we put $v_F:=\P(\Pi_C(g)\in\relint(F))$ and
$$
v_k(C) := \sum_{F\in\cF_k(C)}v_F,\qquad k\in\{0,1,\ldots,d+1\},
$$
where $\mathcal F_k(C)$ is the set of all $k$-dimensional faces of $C$. For convenience also define $v_k(C):=0$ for $k>d+1$. This is the $k$th \textit{conic intrinsic volume} of $C$. We notice that the conic intrinsic volumes of the upper halfspace $H_{\rm up}:=\{x=(x_0,\ldots,x_{d})\in\R^{d+1}:x_{0}\geq 0\}$ are given by $v_k(H_{\rm up})=0$ if $k\in\{0,1,\ldots,d-1\}$ and $v_d(H_{\rm up})=v_{d+1}(H_{\rm up})=1/2$. If $C$ is a $k$-dimensional linear subspace, then $v_{k}(C)=1$, while all other conic intrinsic volumes vanish. Henceforth, we shall always exclude the case when $C$ is linear subspace (since formulae~\eqref{eq:gauss_bonnet} and~\eqref{eq:h_def} below are not valid in this case).  One important property of the conic intrinsic volumes is the \textit{Gauss--Bonnet formula} \cite[p.~28, Equation~(5.3)]{ALMT14}
\begin{equation}\label{eq:gauss_bonnet}
v_0(C)+ v_2(C) + \ldots = v_1(C)+ v_3(C) +\ldots = \frac 12.
\end{equation}

Next let us define the so-called \textit{Grassmann angles} of a polyhedral convex cone $C$.
If $G(d+1,d+1-k)$ with $k\in\{0,1,\ldots,d+1\}$ is the Grassmannian of all $(d+1-k)$-dimensional linear subspaces with the unique Haar probability measure $\nu_{d+1-k}$ (see Section~\ref{sec:BackgroundMaterial} below) and $L\in G(d+1,d+1-k)$ is a random subspace distributed according to $\nu_{d+1-k}$, we can define
\begin{equation}\label{eq:h_def}
h_{k+1}(C) := {1\over 2}\P(C\cap L\neq\{0\}),\qquad k\in\{0,1,\ldots,d\}.
\end{equation}
This is the $(k+1)$st Grassmann angle of $C$ that has been introduced by Gr\"unbaum \cite{GruenbaumGA}.   In particular, the $(d+1)$st Grassmann angle $h_{d+1}(C)$ coincides with the solid angle $\alpha(C)$ studied above. Note also that all Grassmann angles $h_1,\ldots,h_{d+1}$ of the upper halfspace $H_{\rm up}$ are equal to $1/2$.   The \textit{conic Crofton formula}~\cite[Equation (2.10)]{AmelunxenLotzDCG17} states that the conic intrinsic volumes and the Grassmann angles are related by
\begin{equation}\label{eq:ConicalIntVolGrassmannAngle}
h_{k+1}(C) = \sum_{\substack{i\geq 1\\ i\text{ odd}}}v_{k+i}(C).
\end{equation}
In the terminology of~\cite{ALMT14}, the above sums (which are in fact finite) are called the \textit{half-tail functionals}.
For every cone $C$ we have $h_1(C)=1/2$ and we put $h_0(C) = 1/2$, $h_{d+2}(C)=h_{d+3}(C)=\ldots = 0$ in order to be consistent with~\eqref{eq:gauss_bonnet}.

Finally, we may consider the \textit{conic mean projection volumes} defined for $k\in\{0,1,\ldots,d\}$ by
$$
w_{k+1}(C) := \frac 1{\kappa_{k+1}}\int_{G(d+1,k+1)}\Vol_{k+1}(P_L(C)\cap\mathbb{B}^{d+1})\,\nu_{k+1}(\dd L),
$$
where $\Vol_{k+1}$ stands for the Lebesgue measure in $L\in G(d+1,k+1)$, $P_L$ for the orthogonal projection onto $L$ and $\mathbb{B}^{d+1}$ for the $(d+1)$-dimensional unit ball. The conic mean projection volumes are related to the conic intrinsic volumes via what may be called the \textit{conic Kubota formula}
\begin{align}\label{eq:ConicalMeanProjectionIntVol}
w_{k+1}(C) = \sum_{i=k+1}^{d+1} v_i(C) = h_{k+1}(C) + h_{k+2}(C),
\end{align}
see Lemma \ref{lem:ConicalMeanProjection}. Thus, the conic mean projection volumes coincide with the \textit{tail functionals} in the language of~\cite{ALMT14}. For the half-space $H_{\rm up}$ we have $w_{1}(H_{\rm up}) = \ldots = w_{d}(H_{\rm up}) = 1$ and $w_{d+1}(H_{\rm up}) = 1/2$.

The next result relates the expected Grassmann angles of the random cone $C_n$ to its expected $f$-vector.
\begin{theorem}\label{theo:buchta}
For all $k\in\{1,\ldots,d\}$ we have
\begin{align*}
2 \binom {n+d+1-k}{d+1-k} \Big(\frac 12 - \E h_{k+1}(C_n)\Big) = \E  f_{d+1-k} (C_{n+d+1-k}).
\end{align*}
\end{theorem}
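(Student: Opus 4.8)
The plan is to interpret both sides probabilistically and to match them by an integral-geometric duality. Writing $j := d+1-k \in \{1,\ldots,d\}$, the left-hand side simplifies using~\eqref{eq:h_def}: since $L$ is $\nu_{j}$-distributed on $G(d+1,j)$ and independent of $U_1,\ldots,U_n$, one has
\[
2\Big(\tfrac12 - \E h_{k+1}(C_n)\Big) = 1 - \P(C_n \cap L \neq \{0\}) = \P(C_n \cap L = \{0\}),
\]
so the claim reduces to $\binom{n+j}{j}\,\P(C_n \cap L = \{0\}) = \E f_j(C_{n+j})$. For the right-hand side I would use that, almost surely, $U_1,\ldots,U_{n+j}$ are in general position, so that $C_{n+j}$ is simplicial and every $j$-face is the positive hull of exactly $j$ generators; by exchangeability of the $U_i$ this gives $\E f_j(C_{n+j}) = \binom{n+j}{j}\,\P(\pos\{U_1,\ldots,U_j\}\text{ is a }j\text{-face of }C_{n+j})$. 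It then suffices to prove that $\P(C_n \cap L = \{0\})$ and $\P(\pos\{U_1,\ldots,U_j\}\text{ is a }j\text{-face of }C_{n+j})$ coincide.

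The first key step is a distributional identity: the subspace $\mathrm{lin}\{U_1,\ldots,U_j\}$ spanned by $j$ uniform points on $\mathbb S^d_+$ is again $\nu_j$-distributed on $G(d+1,j)$. I would prove this by noting that this subspace depends only on the $j$ random lines $\mathrm{span}(U_1),\ldots,\mathrm{span}(U_j)$, and that the uniform distribution on the half-sphere pushes forward, under $x \mapsto \mathrm{span}(x)$, to the same rotation-invariant measure on the space of lines as the uniform distribution on the whole sphere $\mathbb S^d$, because the open upper hemisphere is a fundamental domain for the antipodal map. Hence these lines are i.i.d.\ and $O(d+1)$-invariant, their span is $O(d+1)$-invariant in distribution, and therefore equals $\nu_j$. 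Consequently I may replace the abstract Haar subspace $L$ on the left by $L_0 := \mathrm{lin}\{U_1,\ldots,U_j\}$, while the points $U_{j+1},\ldots,U_{n+j}$ play the role of the $n$ generators of $C_n$ and are independent of $L_0$, which is exactly the configuration appearing on the right.

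The second key step is a separation argument identifying the two events. After relabelling I need that $C_n \cap L_0 = \{0\}$, for a cone $C_n = \pos\{U_1,\ldots,U_n\}$ independent of $L_0$, is equivalent to the existence of $\ell \in L_0^\perp\setminus\{0\}$ with $\ell(U_i) < 0$ for all $i$; the latter is precisely the condition that $\pos\{U_1,\ldots,U_j\}$ is a $j$-face, realised by the supporting hyperplane $\{\ell=0\}$. This equivalence is Gordan's theorem applied to the projected vectors $\pi(U_i)$ in the quotient $\R^{d+1}/L_0 \cong \R^{d+1-j}$: exactly one of the alternatives ``$0$ is a nontrivial nonnegative combination of the $\pi(U_i)$'' (equivalently $C_n \cap L_0 \neq \{0\}$, using that points on the open upper half-sphere admit no nontrivial vanishing nonnegative combination because of their positive $x_0$-coordinate) and ``$\langle \pi(U_i), y\rangle > 0$ for all $i$'' (equivalently the separation condition, identifying $y \in (\R^{d+1}/L_0)^* = L_0^\perp$) holds. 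Combining the two steps yields the equality of probabilities and hence the theorem.

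I expect the main obstacle to be the rigorous justification of the distributional identity for $\mathrm{lin}\{U_1,\ldots,U_j\}$ together with the independence bookkeeping: making sure that, after identifying the spanned subspace with the Haar subspace $L$, the remaining $n$ points indeed form an independent copy of the configuration generating $C_n$, and that all almost-sure general-position assumptions (linear independence of $U_1,\ldots,U_j$, strictness in the separation, simpliciality of $C_{n+j}$) hold simultaneously, so that the boundary cases in Gordan's dichotomy are negligible.
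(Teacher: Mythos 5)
Your proof is correct, and it shares with the paper's proof the one decisive probabilistic trick: realizing the Haar-distributed subspace $L\in G(d+1,j)$, $j:=d+1-k$, as $\mathrm{lin}\{V_1,\ldots,V_j\}$ of $j$ additional i.i.d.\ uniform points on $\mathbb{S}^d_+$ (the paper asserts this generation of $L$ without proof; your fundamental-domain argument for the antipodal map plus uniqueness of the invariant measure is a valid justification), combined with exchangeability and a.s.\ general position to write $\E f_j(C_{n+j})=\binom{n+j}{j}\,\P\big(\pos\{U_1,\ldots,U_j\}\in\cF_j(C_{n+j})\big)$. Where you genuinely diverge is in how the two probabilities are matched. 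The paper transports everything to the tangent hyperplane via the gnomonic projection $\cP$ and expresses both $\tfrac12-\E h_{k+1}(C_n)$ and $\binom{n+j}{j}^{-1}\E f_j(C_{n+j})$ as one and the same explicit integral over $(\R^d)^j$ against the Cauchy density, using the affine characterization that $\conv\{x_1,\ldots,x_j\}$ is a face of the convex hull iff $\aff\{x_1,\ldots,x_j\}$ misses the convex hull of the remaining points. You instead stay in the conic picture in $\R^{d+1}$ and prove the almost sure identity of events $\{C_n\cap L_0=\{0\}\}=\{\pos\{U_1,\ldots,U_j\}\in\cF_j(C_{n+j})\}$ directly, via Gordan's alternative in the quotient $\R^{d+1}/L_0$, exploiting that all points have strictly positive $x_0$-coordinate (which makes the cone pointed and rules out trivial vanishing nonnegative combinations). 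Your route is more elementary and arguably more transparent: no projection map, no densities, no integral representations, and the theorem appears as an exact equality of events rather than as two integrals that happen to coincide. What the paper's route buys in exchange is the integral formula \eqref{eq:E_f_k_explicit}, which is not merely a device for this theorem: it is what connects the limits in Theorem~\ref{thm:ConicalIntrinsicVolumes} and Theorem~\ref{thm:WeakConvergenceGrassmannAngles} to the constants $B_{k,d}$ of \eqref{eq:DefBk}, so your proof, while cleaner here, would not by itself feed into those later arguments. The bookkeeping issues you flag (linear independence of the $j$ spanning points, strictness of the separation, simpliciality of $C_{n+j}$) are indeed all null-event considerations and pose no obstacle.
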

The above formula should be compared to the well-known Efron identity~\cite{efron} that states that for random points $Q_1,Q_2,\ldots$ sampled uniformly and independently from a convex body $K\subset \R^d$ and all $n\geq d+1$ we have
$$
\frac{\E \text{Vol}_d \conv\{Q_1,\ldots,Q_n\}}{\text{Vol}_d(K)} = 1 - \frac{\E f_0(\conv\{Q_1,\ldots,Q_{n+1}\})}{n+1}.
$$
Buchta~\cite{buchta} obtained an analogue of this identity for higher moments of the volume, but no identity relating the expected $f$-vector of random polytopes to their intrinsic volumes is known in the Euclidean case, to the best of our knowledge (however, we refer to \cite{HoermannHugReitznerThaele,SchneiderWeightedFaces} for results in this direction for the zero cells of Poisson hyperplane tessellations).

Our next result identifies asymptotically the expected conic intrinsic volumes, the Grassmann angles and the conic mean projection volumes of the random cones $C_n$. Note that this completely settles in an extended form the conjecture of B\'ar\'any et al.\ stated in \cite[Section 9]{barany_etal}.

\begin{theorem}\label{thm:ConicalIntrinsicVolumes}
For every $k\in\{0,1,\ldots,d\}$ we have
\begin{equation}\label{eq:conic_asympt1}
\lim_{n\to\infty}n^{d+1-k}\Big({1\over 2}-\E h_{k+1}(C_n)\Big) = B_{d+1-k,d},
\end{equation}
where $B_{1,d},\ldots, B_{d,d}$ are given by~\eqref{eq:DefBk}, and $B_{d+1,d}=0$.
Moreover, for all $\ell,r\in\{0,1,\ldots,d-1\}$ we have
\begin{align}
& \lim_{n\to\infty}n^{d-\ell}\,\E v_\ell(C_n) = B_{d-\ell,d}, \label{eq:conic_asympt2}\\
& \lim_{n\to\infty}n^{d-r} \big(1 - \E w_{r+1}(C_n)\big) = B_{d-r,d}. \label{eq:conic_asympt3}
\end{align}

\end{theorem}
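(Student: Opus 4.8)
The plan is to reduce all three limits to the single relation~\eqref{eq:conic_asympt1} for the Grassmann angles, since~\eqref{eq:conic_asympt2} and~\eqref{eq:conic_asympt3} then follow purely algebraically from the linear relations~\eqref{eq:ConicalIntVolGrassmannAngle} and~\eqref{eq:ConicalMeanProjectionIntVol} among the three families $v$, $h$, $w$. For~\eqref{eq:conic_asympt1} itself, the idea is to combine the exact Buchta-type identity of Theorem~\ref{theo:buchta} with the expectation asymptotics of Theorem~\ref{theo:ConvergenceOfExpectations}: the former converts $\tfrac12-\E h_{k+1}(C_n)$ into a normalised expected face number, and the latter supplies its limit.

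Concretely, for~\eqref{eq:conic_asympt1} I would first dispose of $k=0$, where $h_1(C_n)=\tfrac12$ identically so the left-hand side is exactly $0=B_{d+1,d}$. For $k\in\{1,\ldots,d\}$ I would rewrite Theorem~\ref{theo:buchta} as
$$
n^{d+1-k}\Big(\tfrac12-\E h_{k+1}(C_n)\Big) = \frac{n^{d+1-k}}{2\binom{n+d+1-k}{d+1-k}}\,\E f_{d+1-k}(C_{n+d+1-k}).
$$
Writing $m:=d+1-k\in\{1,\ldots,d\}$, the elementary asymptotics $\binom{n+m}{m}=\frac{(n+m)\cdots(n+1)}{m!}\sim n^m/m!$ combined with $\E f_{m}(C_{n+m})\to\frac{2}{m!}B_{m,d}$ from Theorem~\ref{theo:ConvergenceOfExpectations} (legitimate since the index $n+m\to\infty$ and $m\in\{1,\ldots,d\}$ is in range) makes the factorials and the powers of $n$ cancel, leaving the limit $B_{m,d}=B_{d+1-k,d}$.

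For~\eqref{eq:conic_asympt2} I would invert the conic Crofton formula~\eqref{eq:ConicalIntVolGrassmannAngle}: subtracting $h_{\ell+2}(C)=v_{\ell+2}(C)+v_{\ell+4}(C)+\cdots$ from $h_\ell(C)=v_\ell(C)+v_{\ell+2}(C)+\cdots$ telescopes to $v_\ell(C)=h_\ell(C)-h_{\ell+2}(C)$ for all $\ell\geq 0$ (the case $\ell=0$ is covered because the Gauss--Bonnet identity~\eqref{eq:gauss_bonnet} together with the convention $h_0=\tfrac12$ gives $h_0=v_0+v_2+\cdots$). Hence
$$
n^{d-\ell}\,\E v_\ell(C_n)=n^{d-\ell}\Big(\tfrac12-\E h_{\ell+2}(C_n)\Big)-n^{d-\ell}\Big(\tfrac12-\E h_\ell(C_n)\Big),
$$
where the first term tends to $B_{d-\ell,d}$ by~\eqref{eq:conic_asympt1} (with $k+1=\ell+2$), while the second equals $n^{-2}\cdot n^{d+2-\ell}(\tfrac12-\E h_\ell(C_n))\to 0$ (again by~\eqref{eq:conic_asympt1}, the bracket being $O(n^{-(d+2-\ell)})$, and identically zero when $\ell\in\{0,1\}$). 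This yields the limit $B_{d-\ell,d}$. Statement~\eqref{eq:conic_asympt3} is handled identically via the conic Kubota formula~\eqref{eq:ConicalMeanProjectionIntVol}, $w_{r+1}(C)=h_{r+1}(C)+h_{r+2}(C)$, so that $n^{d-r}(1-\E w_{r+1}(C_n))=n^{d-r}(\tfrac12-\E h_{r+1}(C_n))+n^{d-r}(\tfrac12-\E h_{r+2}(C_n))$, where the $h_{r+1}$-term is of order $n^{-1}$ and vanishes while the $h_{r+2}$-term converges to $B_{d-r,d}$.

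Since Theorems~\ref{theo:buchta} and~\ref{theo:ConvergenceOfExpectations} are already established, no genuinely hard analytic step remains: the substance lies entirely in those two inputs. The only real care required is the index bookkeeping through the shifts $k\mapsto d+1-k$, $\ell\mapsto \ell+2$, $r\mapsto r+1,r+2$, the boundary conventions $h_0=h_1=\tfrac12$, $B_{d+1,d}=0$ and $v_k=h_k=0$ for out-of-range $k$, and the routine verification at each stage that the subtracted or added term is of strictly lower order in $n$ and therefore does not disturb the leading constant.
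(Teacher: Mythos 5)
Your proposal is correct and follows essentially the same route as the paper: the paper likewise proves~\eqref{eq:conic_asympt1} by combining Theorem~\ref{theo:buchta} with Theorem~\ref{theo:ConvergenceOfExpectations} (treating $k=0$ trivially via $h_1(C_n)=\tfrac12$), and then deduces~\eqref{eq:conic_asympt2} and~\eqref{eq:conic_asympt3} from $v_\ell=h_\ell-h_{\ell+2}$ and $w_{r+1}=h_{r+1}+h_{r+2}$ with exactly your lower-order-term argument. If anything, you are more explicit than the paper about the binomial asymptotics and the boundary cases $\ell\in\{0,1\}$ (and your face index $f_{d+1-k}$ corrects a typo, $f_{d-1-k}$, in the paper's displayed formula).
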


\begin{remark}
Note that $v_d(C_n) = h_d(C_n) \to 1/2$ and $v_{d+1}(C_n) = h_{d+1}(C_n) \to 1/2$, as $n\to\infty$, hence we have restricted ourselves to the conic intrinsic volumes $v_l(C_n)$ of orders $\ell \in \{0,\ldots,d-1\}$ in~\eqref{eq:conic_asympt2}. Similarly, $w_{d+1}(C_n) = h_{d+1}(C_n)$, hence we omitted the case $r=d$ in~\eqref{eq:conic_asympt3}.
\end{remark}

\begin{proposition}\label{prop:B_2_d}
For all $d\geq 2$ we have
$$
B_{2,d} = \frac 12 \binom {d+1}{3} \pi^2.
$$
\end{proposition}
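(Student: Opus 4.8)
The plan is to evaluate the integral defining $B_{2,d}$ in~\eqref{eq:DefBk} for $k=2$ by reducing the codimension by one. For $k=2$ the affine hull $\aff\{x_1,x_2\}$ is a line $\ell$; equivalently, by Theorem~\ref{theo:ConvergenceOfExpectations}, $B_{2,d}=\E f_1(\conv\Pi_{d,1}(2))$ is the expected number of edges of the limit polytope. First I would apply the affine Blaschke--Petkantschin formula for $1$-flats, parametrising $\ell$ by its direction $u\in\mathbb{S}^{d-1}$ and offset $p\in u^\perp\cong\R^{d-1}$ and writing $x_i=p+t_iu$. This converts $\int_{(\R^d)^2}$ into $\tfrac{\omega_d}{2}\int\nu_1(\dd u)\int_{u^\perp}\dd p\int_{\R^2}(\cdots)\,|t_1-t_2|^{d-1}\,\dd t_1\dd t_2$ and turns each $\|x_i\|^{d+1}$ into $(\|p\|^2+t_i^2)^{(d+1)/2}$, since $p\perp u$.

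The structural input is a projection property of power-law processes: the orthogonal projection of $\Pi_{d,1}(2)$ onto $u^\perp$ is again a process of the same type, namely $\Pi_{d-1,1}(2)$, with both the exponent $\gamma=1$ and the constant $2$ preserved (this is checked by computing the one-dimensional marginal $\int_\R(s^2+\|y\|^2)^{-(d+1)/2}\,\dd s$). Since a line misses a convex body precisely when its offset misses the body's projection, $\P(\ell\cap\conv\Pi_{d,1}(2)=\varnothing)=\P(p\notin\conv\Pi_{d-1,1}(2))=:g(\|p\|)$, which depends only on $\|p\|$ by isotropy. After scaling $t_i=\|p\|\tau_i$ the inner integral equals $\|p\|^{-(d+1)}J$, and the whole expression factorises as
$$ B_{2,d}=\frac{\omega_d}{\omega_{d+1}^2}\,J\,E_2,\qquad J=\int_{\R^2}\frac{|\tau_1-\tau_2|^{d-1}\,\dd\tau_1\dd\tau_2}{(1+\tau_1^2)^{\frac{d+1}2}(1+\tau_2^2)^{\frac{d+1}2}},\qquad E_2=\int_{\R^{d-1}}\frac{g(\|p\|)}{\|p\|^{d+1}}\,\dd p. $$

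The factor $J$ is elementary: the substitution $\tau_i=\tan\theta_i$ gives $J=\int_{-\pi/2}^{\pi/2}\!\int_{-\pi/2}^{\pi/2}|\sin(\theta_1-\theta_2)|^{d-1}\,\dd\theta_1\dd\theta_2$, and the change of variables $u=\theta_1-\theta_2$ reduces this to $2\int_0^\pi(\pi-u)\sin^{d-1}u\,\dd u=\pi\int_0^\pi\sin^{d-1}u\,\dd u=\pi^{3/2}\,\Gamma(\tfrac d2)/\Gamma(\tfrac{d+1}2)$. The factor $E_2=\E\int_{\R^{d-1}\setminus\conv\Pi_{d-1,1}(2)}\|x\|^{-(d+1)}\,\dd x$ is the expected weighted volume of the complement of the hull; passing to polar coordinates it equals $\tfrac{\omega_{d-1}}2\,\E[\rho^{-2}]$, where $\rho$ is the radial function of $\conv\Pi_{d-1,1}(2)$ in a fixed direction. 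This is an instance of the expected $T$-functional of a power-law Poisson hull and is supplied by Theorem~\ref{theo:T_expect} (the same mechanism that yields $B_{d,d}$); it evaluates to $E_2=\binom{d+1}{3}\pi^2\kappa_{d-1}$.

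Substituting these values and using $\kappa_{d-1}=\pi^{(d-1)/2}/\Gamma(\tfrac{d+1}2)$ together with $\omega_d=2\pi^{d/2}/\Gamma(\tfrac d2)$ and $\omega_{d+1}=2\pi^{(d+1)/2}/\Gamma(\tfrac{d+1}2)$, all Gamma factors cancel and the power of $\pi$ collapses, leaving $B_{2,d}=\omega_d\,J\,E_2/\omega_{d+1}^2=\tfrac12\binom{d+1}{3}\pi^2$. I expect the main obstacle to be the evaluation of $E_2$: the weight exponent $\|x\|^{-(d+1)}$ is the ``$\gamma=2$'' power in dimension $d-1$, whereas the hull comes from a $\gamma=1$ process, so $E_2$ is a genuine $T$-functional rather than the plain volume, and matching it to the form of Theorem~\ref{theo:T_expect} is the delicate point. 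The Blaschke--Petkantschin bookkeeping, the projection identity, and the evaluation of $J$ are enabling but routine, and the case $d=2$ (where $\ell$ is a hyperplane, so $B_{2,2}=B_{d,d}\big|_{d=2}=\tfrac{\pi^2}2$) provides a useful consistency check.
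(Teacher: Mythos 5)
Your reduction is correct as far as it goes: the Blaschke--Petkantschin step for $1$-flats with $b_{d,1}=\omega_d/2$, the use of Lemma~\ref{lem:projection} to identify the projection of $\Pi_{d,1}(2)$ onto $u^\perp$ with $\Pi_{d-1,1}(2)$, the scaling $t_i=\|p\|\tau_i$, and the evaluation $J=\pi^{3/2}\Gamma(\tfrac d2)/\Gamma(\tfrac{d+1}2)$ are all fine, and so is the factorisation $B_{2,d}=\tfrac{\omega_d}{\omega_{d+1}^2}\,J\,E_2$. The genuine gap is the assertion that $E_2$ ``is supplied by Theorem~\ref{theo:T_expect}''. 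It is not: decomposing the complement of the hull into the cones over its facets gives
\begin{equation*}
E_2 \;=\; \E\sum_{F\in\mathcal F_{d-2}(\conv\Pi_{d-1,1}(2))}\frac{\dist(F)}{2}\int_F\frac{\mathcal H^{d-2}(\dd y)}{\|y\|^{d+1}},
\end{equation*}
and the weighted integral over $F$ depends on the position and shape of the facet, not only on $\dist(F)$ and $\Vol_{d-2}(F)$, so no choice of $(a,b)$ in $T^{d-1,d-2}_{a,b}$ produces this functional. Equivalently, by Fubini, $E_2=\omega_{d-1}\int_0^\infty p_{d-1,1}(r)\,r^{-3}\,\dd r$ with the non-absorption probability $p_{d-1,1}$ of \eqref{eq:AbsobptionProbability} taken in dimension $d-1$. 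Theorem~\ref{theo:T_expect} is computable precisely because the facet case projects everything to dimension one, where $p_{1,\gamma}$ is the explicit exponential \eqref{eq:Absorp1Dim}; for $d\geq 3$ the function $p_{d-1,1}$ is not known (this is exactly why the paper declares the expected number of $k$-faces for general $k$ an open problem in the remark after Corollary~\ref{cor:hyperfaces_expect}). In fact, your own identity gives $E_2=\frac{\omega_{d+1}^2}{\omega_d J}B_{2,d}=2\kappa_{d-1}B_{2,d}$, so the claimed value $E_2=\binom{d+1}{3}\pi^2\kappa_{d-1}$ is \emph{equivalent} to the proposition you are trying to prove: you have reduced the statement to an equivalent one and then asserted it. The only dimension in which your argument closes is $d=2$, where the residual problem lives in dimension one and $E_2=2\int_0^\infty e^{-1/(\pi r)}r^{-3}\,\dd r=2\pi^2$.

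The paper avoids this computation entirely by an indirect matching argument: by Theorem~5.1 of \cite{barany_etal}, $\E S(C_n\cap\mathbb S^d)=\omega_d\bigl(1-\binom{d+1}{3}\pi^2n^{-2}+O(n^{-3})\bigr)$, while the identity $S(C_n\cap\mathbb S^d)=2\omega_d v_d(C_n)=2\omega_d h_d(C_n)$ combined with Theorem~\ref{thm:ConicalIntrinsicVolumes} (case $k=d-1$) yields $\E S(C_n\cap\mathbb S^d)=\omega_d\bigl(1-2B_{2,d}n^{-2}+o(n^{-2})\bigr)$; comparing the coefficients of $n^{-2}$ gives $B_{2,d}=\tfrac12\binom{d+1}{3}\pi^2$. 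If you wish to complete your direct route instead, you must actually evaluate the weighted facet functional displayed above (a Sylvester-type integral for beta$'$ points with mismatched exponents), which is strictly harder than anything Theorem~\ref{theo:T_expect} delivers; read the other way, your reduction is a legitimate \emph{corollary} of the proposition, computing $E_2$ from the known $B_{2,d}$.
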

\begin{proof}
For the expected surface area (i.e.\ $(d-1)$-dimensional Hausdorff measure) of the spherical polytope $C_n \cap \bS^{d}$, \citet{barany_etal} showed in their Theorem~5.1 that
$$
\E S(C_n \cap \bS^{d}) = \omega_d \left(1-\binom{d+1}{3}\pi^2 n^{-2} + O(n^{-3})\right),
$$
where $S(K)$ denotes the surface area of the spherical polytope $K$. On the other hand, the relation $2\omega_d h_d(C_n) = 2 \omega_d v_d(C_n) = S(C_n \cap \bS^{d})$ and Theorem~\ref{thm:ConicalIntrinsicVolumes} with $k=d-1$ yield
$$
\E S(C_n \cap \bS^{d}) = \omega_d \left(1 - 2 B_{2,d} n^{-2} + o(n^{-2})\right).
$$
Comparing both asymptotic relations, we obtain the required formula for $B_{2,d}$.
\end{proof}

Let us consider the special case $d=2$, where $B_{2,2} =\frac 12 \pi^2$ and hence
\begin{multline*}
\lim_{n\to\infty} \E f_0(C_n\cap \bS_+^2) = \lim_{n\to\infty} \E f_1(C_n\cap \bS_+^2)
=
\E f_0(\conv \Pi_{2,1}(c)) = \E f_1(\conv \Pi_{2,1}(c)) = \frac 12 \pi^2,
\end{multline*}
with $c>0$ being arbitrary.
For $d=3$, the identities $B_{3,3}=4\pi^2$ and $B_{2,3}=2\pi^2$ (following from~\eqref{eq:ConstantBd} and Proposition~\ref{prop:B_2_d}) combined with the Euler relation $f_0-f_1+f_2=2$ yield
\begin{multline*}
\lim_{n\to\infty} (\E f_0(C_n\cap \bS_+^3), \E f_1(C_n\cap \bS_+^3), \E f_2(C_n\cap \bS_+^3))
\\
=
(\E f_0(\conv \Pi_{3,1}(c)), \E f_1(\conv \Pi_{3,1}(c)), \E f_2(\conv \Pi_{3,1}(c)))
=
\left(2+\frac 23 \pi^2, 2\pi^2, \frac 4 3 \pi^2\right).
\end{multline*}

As a complement to Theorem \ref{thm:ConicalIntrinsicVolumes} we have the following weak limit theorem. This extends Theorem \ref{theo:conv_of_angle} to all Grassmann angles, which appears as the special case $k=d$.

\begin{theorem}\label{thm:WeakConvergenceGrassmannAngles}
For all  $k\in\{0,1,\ldots,d\}$ we have that
\begin{align*}
n^{d+1-k}\Big({1\over 2}- h_{k+1}(C_n)\Big) \overset{{\rm d}}{\longrightarrow} {1\over 2}\Big({2\over\omega_{d+1}}\Big)^{d+1-k} &\int_{(\R^d)^{d+1-k}}\ind_{\{\conv \Pi_{d,1}(2)\cap\aff\{x_1,\ldots,x_{d+1-k}\} = \varnothing\}}\\
&\qquad\qquad\qquad\times\prod_{i=1}^{d+1-k}{\dd x_i\over\|x_i\|^{d+1}}\,,
\end{align*}
as $n\to\infty$.
\end{theorem}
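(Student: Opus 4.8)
The plan is to reduce the statement to the weak convergence Theorem~\ref{theo:conv_of_polytopes} by first producing an \emph{exact} integral representation of $\tfrac12-h_{k+1}(C_n)$ as a functional of the planar section $P_n:=\conv\{\mathcal P(U_1),\ldots,\mathcal P(U_n)\}=(C_n\cap H_1)-e_0$. Write $m:=d+1-k$. By~\eqref{eq:h_def} we have $\tfrac12-h_{k+1}(C_n)=\tfrac12\,\P_L(C_n\cap L=\{0\})$, where $L\in G(d+1,m)$ is $\nu_m$-distributed and the probability is conditional on $C_n$. I would realise $L=\mathrm{span}\{V_1,\ldots,V_m\}$ with $V_1,\ldots,V_m$ i.i.d.\ uniform on $\mathbb{S}^d$, independent of the $U_i$; the span of $m$ such points is almost surely $m$-dimensional and rotation invariant, hence $\nu_m$-distributed. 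Since $(U_i)_0>0$ almost surely one checks $C_n\cap\{x_0=0\}=\{0\}$, so any non-zero point of $C_n\cap L$ has positive first coordinate; normalising it into $H_1$ and translating by $-e_0$ gives
$$
\{C_n\cap L=\{0\}\}=\{P_n\cap(L\cap H_1-e_0)=\varnothing\},
$$
and $L\cap H_1-e_0=\aff\{\mathcal P(V_1),\ldots,\mathcal P(V_m)\}$ because the ray through $V_i$ meets $H_1$ at $(1,\mathcal P(V_i))$. By Proposition~\ref{prop:cauchy} the independent points $\mathcal P(V_i)$ carry the Cauchy density $\tfrac{2}{\omega_{d+1}}(1+\|x\|^2)^{-(d+1)/2}$, so
$$
\tfrac12-h_{k+1}(C_n)=\tfrac12\int_{(\R^d)^m}\ind_{\{P_n\cap\aff\{y_1,\ldots,y_m\}=\varnothing\}}\prod_{i=1}^m\frac{2}{\omega_{d+1}}\frac{\dd y_i}{(1+\|y_i\|^2)^{(d+1)/2}}.
$$
Integrated also over the $U_i$, this is exactly the identity behind Theorem~\ref{theo:buchta} and the expectation asymptotics~\eqref{eq:conic_asympt1}; here I keep the randomness of $P_n$ inside.

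Next I would rescale. The substitution $y_i=nx_i$ turns the event into $\{\tfrac1nP_n\cap\aff\{x_1,\ldots,x_m\}=\varnothing\}$, and multiplying by $n^m$ yields
$$
n^m\Big(\tfrac12-h_{k+1}(C_n)\Big)=\tfrac12\int_{(\R^d)^m}\ind_{\{\frac1nP_n\cap\aff\{x_1,\ldots,x_m\}=\varnothing\}}\prod_{i=1}^m\frac{2}{\omega_{d+1}}\frac{n^{d+1}\,\dd x_i}{(1+n^2\|x_i\|^2)^{(d+1)/2}}.
$$
For each fixed $x_i\neq0$ the density converges to $\tfrac{2}{\omega_{d+1}}\|x_i\|^{-(d+1)}$, the intensity of $\Pi_{d,1}(2)$, and by the computation in Section~\ref{subsec:weak_conv_thm} one has $\tfrac1nP_n=(T_nC_n\cap H_1)-e_0\overset{{\rm d}}{\longrightarrow}\conv\Pi_{d,1}(2)$ by Theorem~\ref{theo:conv_of_polytopes}. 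Thus the right-hand side ought to converge to the claimed limit, a continuous functional of $\conv\Pi_{d,1}(2)$.

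To make this rigorous I would invoke the Skorokhod representation theorem to realise $\tfrac1nP_n\to K_\infty:=\conv\Pi_{d,1}(2)$ almost surely in the Hausdorff metric on a common probability space, and then establish the pathwise convergence of the integral by dominated convergence. Two points need care. First, for Lebesgue-almost every $(x_1,\ldots,x_m)$ the flat $\aff\{x_1,\ldots,x_m\}$ either misses $K_\infty$ or meets $\relint K_\infty$, and in both cases $\ind_{\{\frac1nP_n\cap\aff\{\cdots\}=\varnothing\}}\to\ind_{\{K_\infty\cap\aff\{\cdots\}=\varnothing\}}$; the exceptional tangent configurations form a Lebesgue-null set because $K_\infty$ is almost surely a full-dimensional polytope (Corollary~\ref{cor:polytope}). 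Second, since $(1+n^2\|x\|^2)^{(d+1)/2}\geq n^{d+1}\|x\|^{d+1}$, each rescaled density is bounded by $\tfrac{2}{\omega_{d+1}}\|x\|^{-(d+1)}$ uniformly in $n$; and because $0\in\relint K_\infty$ almost surely, there is a random $\rho>0$ with $\overline{B(0,\rho)}\subset\tfrac1nP_n$ for all large $n$, so the constraint forces every $x_i\notin B(0,\rho)$. Hence for large $n$ the integrand is dominated by $\prod_i\tfrac{2}{\omega_{d+1}}\|x_i\|^{-(d+1)}\ind_{\{\|x_i\|\geq\rho\}}$, which is integrable. Dominated convergence gives the pathwise limit, and transferring back along the Skorokhod coupling yields convergence in distribution to the stated functional.

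The main obstacle is the second point: controlling the integral uniformly near the origin. The limiting density $\|x\|^{-(d+1)}$ is \emph{not} integrable at $0$, so the whole argument hinges on the geometric observation that $\tfrac1nP_n\cap\aff\{x_1,\ldots,x_m\}=\varnothing$ keeps all the $x_i$ outside a fixed neighbourhood of the origin once $\tfrac1nP_n$ contains such a neighbourhood; combined with the uniform bound $\tfrac{2}{\omega_{d+1}}\|x\|^{-(d+1)}$ on the rescaled Cauchy densities, this is what produces an integrable dominating function. The a.e.-continuity of the indicator at the limit (excluding tangent flats) also requires the polytope structure of $\conv\Pi_{d,1}(2)$, but it is comparatively routine.
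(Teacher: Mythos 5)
Your proof is correct and follows essentially the same route as the paper: the paper likewise realises the random subspace as the span of $d+1-k$ independent uniform points, applies the gnomonic map $\cP$ to obtain exactly your integral representation $n^{d+1-k}\bigl(\tfrac12-h_{k+1}(C_n)\bigr)=\tfrac12\int\ind_{\{\frac1nP_n\cap\aff\{x_1,\ldots,x_{d+1-k}\}=\varnothing\}}\prod_ip_n(x_i)\,\dd x_i$, and then concludes by Skorokhod representation and dominated convergence as in the proof of Theorem~\ref{theo:conv_of_angle}. In fact your write-up supplies two details that the paper's sketch glosses over, namely the Lebesgue-null character of the tangent-flat configurations and the explicit integrable dominating function forced by $\overline{B(0,\rho)}\subset\tfrac1nP_n$.
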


\subsection{Convex hull of Poisson point process with power-law intensity}\label{subsec:ConvexHullPPP}
We are now going to state explicit formulae for the expected values of some functionals of the random polytopes $\conv \Pi_{d,\gamma}(c)$ introduced in Section~\ref{subsec:weak_conv_thm}.

\subsubsection{Expectation of the \texorpdfstring{$T$}{T}-functional}
The results are most conveniently expressed via the \textit{$T$-functional} introduced by Wieacker~\cite{wieacker}. For a polytope $P\subset \R^d$, $k\in\{0,1,\ldots,d-1\}$ and for $a,b\geq 0$ it is defined by
$$
T_{a,b}^{d,k}(P) = \sum_{F \in \mathcal{F}_{k}(P)} \dist^a (F) \Vol_k^b(F),
$$
where $\dist(F)$ is the distance from the origin to the affine hull of the $k$-face $F$. The next theorem provides an explicit formula for the expected $T$-functional with $k=d-1$ of the random polytopes $\conv \Pi_{d,\gamma}(c)$.

\begin{theorem}\label{theo:T_expect}
For every $\gamma>0$, $c>0$ and all $a,b\geq 0$ such that $(\gamma-b)d + b - a>0$ and $\gamma-b>0$, we have that
\begin{multline*}
\E T_{a,b}^{d,d-1}(\conv\Pi_{d,\gamma}(c))
=
\frac{c^{d} \omega_d }{\gamma\,d!\,\omega_{\gamma+1}^d}
 \left(\frac{c}{\gamma \omega_{\gamma+1}}\right)^{\frac{a-b+ (b-\gamma)d}{\gamma}} \Gamma \left(\frac{(\gamma-b)d + b - a}{\gamma}\right)
 \\
\times\frac 1 {((d-1)!)^{b}}
\frac{\Gamma\left(\frac{\gamma-b}{2}d + \frac{b+1}{2} \right)}{\Gamma\left(\frac{\gamma-b}{2}d \right)} \left( \frac{\Gamma\left(\frac{\gamma-b}{2} \right)}{\Gamma\left(\frac{\gamma+1}{2} \right)} \right)^{d}\; \prod_{i=1}^{d-1} \frac{\Gamma\left( \frac{i+b+1}{2} \right)}{\Gamma\left( \frac{i}{2} \right)}.
\end{multline*}
If $(\gamma-b)d + b - a\leq 0$ or $\gamma-b\leq 0$, then the expectation equals $+\infty$.
\end{theorem}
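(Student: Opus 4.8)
The plan is to compute the expected $T$-functional directly via the multivariate Mecke equation for the process $\Pi_{d,\gamma}(c)$, exploiting that almost surely the origin lies in the interior of $\conv\Pi_{d,\gamma}(c)$ and that, in general position, each $(d-1)$-face is spanned by exactly $d$ points of the process. A $d$-tuple $x_1,\dots,x_d$ spans a facet precisely when all remaining points lie on the side of the hyperplane $H=\aff\{x_1,\dots,x_d\}$ containing the origin; since $\Pi_{d,\gamma}(c)$ has infinitely many points accumulating at $0$, this forces the opposite open half-space $H^+$ (the one not containing the origin) to be empty. Writing $\mu$ for the intensity measure, Mecke's formula turns $\E T^{d,d-1}_{a,b}(\conv\Pi_{d,\gamma}(c))$ into
\begin{equation*}
\frac1{d!}\int_{(\R^d)^d}\dist^a(H)\,\Vol_{d-1}^b(\conv\{x_1,\dots,x_d\})\,e^{-\mu(H^+)}\prod_{i=1}^d \frac{c}{\omega_{d+\gamma}}\frac{\dd x_i}{\|x_i\|^{d+\gamma}},
\end{equation*}
where the factor $e^{-\mu(H^+)}$ is the Poisson void probability $\P(\Pi_{d,\gamma}(c)\cap H^+=\varnothing)$.

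First I would evaluate $\mu(H^+)$ for a half-space at distance $t=\dist(H)>0$. By rotation invariance and the scaling $x\mapsto x/t$ of the density, a short polar-coordinate computation reduces $\mu(H^+)$ to a spherical moment that, via the beta integral, collapses to exactly $\mu(H^+)=\frac{c}{\gamma\omega_{\gamma+1}}\,t^{-\gamma}$. Next I would apply the affine Blaschke--Petkantschin formula to the $d$ points, replacing $\prod\dd x_i$ by an integration over the affine Grassmannian $A(d,d-1)$ of the hyperplane $H$ together with $(d-1)$-dimensional Lebesgue integrations of the $x_i$ inside $H$, at the cost of an extra factor $\Vol_{d-1}(\conv\{x_1,\dots,x_d\})$ and a universal constant. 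Parametrising $H=H(u,t)$ by $(u,t)\in\Sd\times(0,\infty)$ and writing $x_i=tu+y_i$ with $y_i\in u^\perp\cong\R^{d-1}$ gives $\|x_i\|^2=t^2+\|y_i\|^2$, while $\dist(H)=t$ and the in-hyperplane volume are unchanged, so the integrand factorises into a function of $t$ (the $u$-integration contributing only $\omega_d$) times an integral over $(y_1,\dots,y_d)$.

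The scaling $y_i=t z_i$ pulls out all the $t$-dependence: the $z$-integral becomes a $t$-free constant
\begin{equation*}
J:=\int_{(\R^{d-1})^d}\Vol_{d-1}^{b+1}(\conv\{z_1,\dots,z_d\})\prod_{i=1}^d\frac{\dd z_i}{(1+\|z_i\|^2)^{(d+\gamma)/2}},
\end{equation*}
and the remaining $t$-integral is $\int_0^\infty t^{a+(d-1)b-1-d\gamma}\,e^{-\frac{c}{\gamma\omega_{\gamma+1}}t^{-\gamma}}\,\dd t$. The substitution $s=\frac{c}{\gamma\omega_{\gamma+1}}t^{-\gamma}$ evaluates this to $\frac1\gamma\big(\frac{c}{\gamma\omega_{\gamma+1}}\big)^{\frac{a-b+(b-\gamma)d}{\gamma}}\Gamma\!\big(\frac{(\gamma-b)d+b-a}{\gamma}\big)$, and convergence at $t=\infty$ requires precisely $(\gamma-b)d+b-a>0$; otherwise the $t$-integral, and hence the expectation, is $+\infty$. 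Since $(1+\|z\|^2)^{-(d+\gamma)/2}$ is an unnormalised beta$'$ density on $\R^{d-1}$, the constant $J$ equals a normalising constant to the power $d$ times the $(b+1)$st moment of the volume of a beta$'$ random simplex; inspecting its behaviour as one vertex escapes to infinity shows $J<\infty$ iff $\gamma-b>0$, which is exactly the second stated condition.

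The main obstacle is the explicit evaluation of $J$, the moment of the beta$'$-simplex volume. I would obtain it from the known closed form for volume moments of beta$'$ simplices (in the spirit of~\cite{beta_polytopes}), equivalently via a Gaussian/Gamma representation of the beta$'$ vectors followed by a Blaschke--Petkantschin reduction inside $\R^{d-1}$; this produces the factor $\frac1{((d-1)!)^b}$ together with the Gamma quotients $\frac{\Gamma(\frac{\gamma-b}{2}d+\frac{b+1}{2})}{\Gamma(\frac{\gamma-b}{2}d)}\big(\frac{\Gamma(\frac{\gamma-b}{2})}{\Gamma(\frac{\gamma+1}{2})}\big)^{d}\prod_{i=1}^{d-1}\frac{\Gamma(\frac{i+b+1}{2})}{\Gamma(\frac i2)}$. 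The final bookkeeping step is to collect the Blaschke--Petkantschin constant, the factor $\big(\frac{c}{\omega_{d+\gamma}}\big)^d$ from the density, the $u$-integration $\omega_d$, and the beta$'$ normalisation; here the $\Gamma(\frac{d+\gamma}{2})$ from the normalisation cancels against $\omega_{d+\gamma}^d$, leaving the prefactor $\frac{c^d\omega_d}{\gamma\,d!\,\omega_{\gamma+1}^d}$ and matching the claimed expression exactly.
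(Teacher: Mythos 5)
Your proposal is correct and follows essentially the same route as the paper's proof: the multivariate Mecke equation, identification of the facet probability with the void probability $e^{-\frac{c}{\gamma\omega_{\gamma+1}}\dist^{-\gamma}(H)}$ of the half-space away from the origin (the paper obtains exactly this quantity via its projection stability Lemma~\ref{lem:projection} as the one-dimensional non-absorption probability $p_{1,\gamma}$), then the affine Blaschke--Petkantschin formula, the scaling argument extracting the $t$-dependence, and Miles' formula for the $(b+1)$st volume moment of a beta$^{\prime}$ simplex. The two finiteness conditions also appear in your argument precisely where they do in the paper's: $(\gamma-b)d+b-a>0$ from the $t$-integral at infinity, and $\gamma-b>0$ from the finiteness of the simplex-moment integral.
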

Inserting special values for the parameters $a$ and $b$ leads to some interesting consequences.

\subsubsection{Expected number of faces}
Taking $a=b=0$, and observing that almost surely
$$
T_{0,0}^{d,d-1}(\conv\Pi_{d,\gamma}(c))=f_{d-1}(\conv\Pi_{d,\gamma}(c)),
$$
we obtain after simplification the following result for the mean number of facets of $\conv\Pi_{d,\gamma}(c)$.

\begin{corollary}\label{cor:hyperfaces_expect}
For every $\gamma>0$ and $c>0$, we have that
$$
\E f_{d-1}(\conv\Pi_{d,\gamma}(c))=\frac 2d \gamma^{d-1}\pi^{\frac{d-1}{2}} \frac{\Gamma(\frac{\gamma d+1}{2})}{\Gamma(\frac{\gamma d}{ 2})}\left(\frac{\Gamma(\frac{\gamma}{2})}{\Gamma(\frac{\gamma+1}{2})}\right)^d,
$$
independently of the parameter $c>0$.
\end{corollary}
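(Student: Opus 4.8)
The plan is to read the corollary off directly from Theorem~\ref{theo:T_expect} by specialising the parameters to $a=b=0$. The first step is the almost-sure identity $T_{0,0}^{d,d-1}(\conv\Pi_{d,\gamma}(c))=f_{d-1}(\conv\Pi_{d,\gamma}(c))$. Indeed, with probability one every facet $F\in\cF_{d-1}(\conv\Pi_{d,\gamma}(c))$ is genuinely $(d-1)$-dimensional, so $\Vol_{d-1}(F)>0$, and its affine hull (a hyperplane spanned by $d$ of the sampled points) does not pass through the origin, so $\dist(F)>0$; hence $\dist^0(F)\,\Vol_{d-1}^0(F)=1$ for each facet and the defining sum counts exactly the facets. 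Taking expectations therefore reduces the task to evaluating the right-hand side of Theorem~\ref{theo:T_expect} at $a=b=0$.

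Next I would check admissibility and simplify. With $a=b=0$ one has $(\gamma-b)d+b-a=\gamma d>0$ and $\gamma-b=\gamma>0$ for every $\gamma>0$, so the finite formula applies and no divergent case occurs. Several factors then collapse: the Gamma factor $\Gamma(((\gamma-b)d+b-a)/\gamma)$ becomes $\Gamma(d)=(d-1)!$, the power $((d-1)!)^{-b}$ becomes $1$, and the exponent $(a-b+(b-\gamma)d)/\gamma$ equals $-d$, so the bracketed term $\left(c/(\gamma\omega_{\gamma+1})\right)^{-d}$ contributes $\gamma^d\omega_{\gamma+1}^d/c^d$. This combines with the prefactor $c^d\omega_d/(\gamma\,d!\,\omega_{\gamma+1}^d)$ so that both $c$ and $\omega_{\gamma+1}$ cancel completely, reducing the non-Gamma part to $\omega_d\gamma^{d-1}(d-1)!/d!=\omega_d\gamma^{d-1}/d$. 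The disappearance of $c$ is a reassuring sanity check, since $\conv\Pi_{d,\gamma}(c)$ is scale-equivalent for different $c$ and the facet count must be scale invariant.

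The one genuinely computational step is the telescoping product. At $b=0$ the product $\prod_{i=1}^{d-1}\Gamma(\tfrac{i+1}{2})/\Gamma(\tfrac{i}{2})$ telescopes to $\Gamma(d/2)/\Gamma(1/2)=\Gamma(d/2)/\sqrt{\pi}$, while the surviving ratio $\Gamma(\tfrac{\gamma d+1}{2})/\Gamma(\tfrac{\gamma d}{2})$ and the power $\left(\Gamma(\tfrac{\gamma}{2})/\Gamma(\tfrac{\gamma+1}{2})\right)^d$ assemble the claimed Gamma-quotient structure. Inserting $\omega_d=2\pi^{d/2}/\Gamma(d/2)$ cancels the $\Gamma(d/2)$ against the one produced by the telescoping product and turns $\pi^{d/2}/\sqrt{\pi}$ into $\pi^{(d-1)/2}$; collecting the factor $\gamma^{d-1}/d$ then yields precisely $\tfrac2d\gamma^{d-1}\pi^{(d-1)/2}\,\Gamma(\tfrac{\gamma d+1}{2})/\Gamma(\tfrac{\gamma d}{2})\,\left(\Gamma(\tfrac{\gamma}{2})/\Gamma(\tfrac{\gamma+1}{2})\right)^d$, as asserted. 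There is no serious obstacle beyond bookkeeping; the only place demanding care is aligning the telescoping product with the $\omega_d$ substitution so that the two $\Gamma(d/2)$ factors cancel cleanly.
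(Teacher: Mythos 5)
Your proof is correct and is exactly the route the paper takes: it obtains the corollary by setting $a=b=0$ in Theorem~\ref{theo:T_expect}, using the almost-sure identity $T_{0,0}^{d,d-1}(\conv\Pi_{d,\gamma}(c))=f_{d-1}(\conv\Pi_{d,\gamma}(c))$, and simplifying. Your bookkeeping (the exponent $-d$ cancelling $c$ and $\omega_{\gamma+1}$, $\Gamma(d)=(d-1)!$, and the telescoping product $\prod_{i=1}^{d-1}\Gamma(\tfrac{i+1}{2})/\Gamma(\tfrac{i}{2})=\Gamma(\tfrac d2)/\sqrt{\pi}$ combining with $\omega_d=2\pi^{d/2}/\Gamma(\tfrac d2)$) checks out in every detail.
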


\begin{remark}
All faces of the polytope $\conv\Pi_{d,\gamma}(c)$ are simplices with probability $1$. The Dehn--Sommerville relation
$$
d f_{d-1}(\conv\Pi_{d,\gamma}(c))= 2 f_{d-2}(\conv\Pi_{d,\gamma}(c))
$$
allows to compute the expected number of $(d-2)$-faces of $\conv\Pi_{d,\gamma}(c)$, but computing the expected number of $k$-faces for general $k$ remains an open problem.
\end{remark}

In particular, for $\gamma=1$ we obtain
$$
\E f_{d-1}(\conv\Pi_{d,1}(c)) = {2\pi^{d-{\frac 12}}\over d}{\Gamma({d+1\over 2})\over\Gamma({d\over 2})} = \pi^{d-{1\over 2}}{\Gamma({d+1\over 2})\over\Gamma(1+{d\over 2})}
$$
for all $c>0$. Using Legendre's duplication formula for the gamma function this can be rewritten as follows:
\begin{align*}
\pi^{d-{1\over 2}}{\Gamma({d+1\over 2})\over\Gamma(1+{d\over 2})} &= \pi^{d-{1\over 2}}{\Gamma({d+1\over 2})\over\Gamma(1+{d\over 2})}{\Gamma(1+{d\over 2})\over\Gamma(1+{d\over 2})} = {d\pi^{d-{1\over 2}}\over 2}{\Gamma({d+1\over 2})\Gamma({d\over 2})\over\Gamma(1+{d\over 2})^2} \\
&= {d\pi^{d-{1\over 2}}\over 2}{\Gamma(d)\sqrt{2\pi}\,2^{-d+{1\over 2}}\over\Gamma(1+{d\over 2})^2} = {2^{-d}\pi^d\,d!\over\Gamma(1+{d\over 2})^2} = 2^{-d}d!\kappa_d^2.
\end{align*}
This coincides with the limit in \eqref{eq:barany_fd_asympt} and is consistent with Theorem~\ref{theo:conv_of_faces}. More generally, for any $a\in[0,d)$ we have the explicit formula
$$
\E T_{a,0}^{d,d-1}(\conv\Pi_{d,1}(c)) = 2^{1-2a}c^a\,\Big({\pi\over 2}\Big)^{d-a}{\Gamma(d-a)\over\Gamma(1+{d\over 2})\Gamma({d\over 2})}.
$$

Another special case in which the formula from Corollary~\ref{cor:hyperfaces_expect} simplifies is $\gamma=2$. After simple transformations we obtain
$$
\E f_{d-1}(\conv\Pi_{d,2}(c)) = \binom {2d}{d}.
$$
In dimension $d=2$ this means that the expected number of edges (or vertices) of the convex hull of the Poisson point process with intensity $\|x\|^{-4}$ in $\R^2$ is $6$, a fact due to Rogers~\cite{rogers}. For $d=3$ we obtain that the expected number of faces of the convex hull of the Poisson point process with intensity $\|x\|^{-5}$ is $20$. Since the faces are simplices a.s., the relation $3f_2 = 2f_1$ holds, which together with the Euler relation $f_0-f_1+f_2 = 2$ yields that the expected number of edges (respectively, vertices) is $30$ (respectively, $12$). To summarize, the expected $f$-vector of $\conv \Pi_{3,2}$ is the same as the $f$-vector of the regular icosahedron.

Finally, observe that in the case $d=2$ and for arbitrary $\gamma>0$, Corollary~\ref{cor:hyperfaces_expect} can be written as
$$
\E f_{1}(\conv\Pi_{2,\gamma}(c)) = \E f_{0}(\conv\Pi_{2,\gamma}(c)) = 4\pi \frac{B\left(\frac 12, \gamma + \frac 12 \right)}{B^2 \left(\frac 12, \frac{\gamma+1}{2}\right)},
$$
where $B$ denotes the Beta function. This formula is due to~\citet[Theorem~4.4]{Davis+Mulrow+Resnick:1987}; see also~\citet{carnal} where a similar formula is derived for convex hulls of i.i.d.\ samples with spherically symmetric regularly varying distributions.

\subsubsection{Expected volume}
Let us compute the expected volume of $\conv \Pi_{d,\gamma}(c)$. Since the origin is a.s.\ in the interior of $\conv\Pi_{d,\gamma}(c)$, we have that
$$
\Vol_d (\conv\Pi_{d,\gamma}(c)) = {1\over d}\,T_{1,1}^{d,d-1}(\conv\Pi_{d,\gamma}(c)),
$$
which together with Theorem~\ref{theo:T_expect} leads to the following result for the mean volume of the convex hull of $\Pi_{d,\gamma}(c)$.

\begin{corollary}\label{cor:volume_expect}
For every $\gamma>1$ and $c>0$ we have that
$$
\E \Vol_d (\conv\Pi_{d,\gamma}(c))  = {c^{d\over \gamma}\over d! 2^{d(1+{1\over \gamma})}\pi^{d\over 2\gamma}}\left({\gamma\over\Gamma({\gamma+1\over 2})}\right)^{d(\gamma-1)\over{\gamma}}{\Gamma(1+d-{d\over\gamma})\Gamma({\gamma-1\over 2})^d\over{\Gamma(1+{d\over 2})}}.
$$
For $0<\gamma \leq 1$ we have $\E \Vol_d (\conv\Pi_{d,\gamma}(c))=+\infty$.
\end{corollary}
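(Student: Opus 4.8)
The plan is to reduce the statement to a direct substitution into Theorem~\ref{theo:T_expect}, followed by a consolidation of gamma factors. The only geometric input is the identity recorded immediately above the corollary: since the origin lies almost surely in the interior of $\conv\Pi_{d,\gamma}(c)$, the polytope decomposes into the pyramids erected over its facets, each facet $F$ together with the origin spanning a pyramid of volume $\frac1d\dist(F)\Vol_{d-1}(F)$. Summing over facets gives $\Vol_d(\conv\Pi_{d,\gamma}(c)) = \frac1d\,T_{1,1}^{d,d-1}(\conv\Pi_{d,\gamma}(c))$ almost surely, so after taking expectations the task is to evaluate $\frac1d\,\E T_{1,1}^{d,d-1}(\conv\Pi_{d,\gamma}(c))$.

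I would then set $a=b=1$ in Theorem~\ref{theo:T_expect}. In this case the two integrability conditions $(\gamma-b)d+b-a>0$ and $\gamma-b>0$ both collapse to $\gamma>1$, since $(\gamma-1)d+1-1=(\gamma-1)d$ and $\gamma-1$ are simultaneously positive precisely when $\gamma>1$. This accounts in one stroke for the finiteness claimed for $\gamma>1$ and for the value $+\infty$ asserted when $0<\gamma\le 1$.

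With $a=b=1$ fixed, the remaining work is gamma-function bookkeeping, and I would invoke the functional equation $\Gamma(x+1)=x\Gamma(x)$ at three places. First, it collapses the ratio $\Gamma(\tfrac{\gamma-1}{2}d+1)/\Gamma(\tfrac{\gamma-1}{2}d)$ to $\tfrac{(\gamma-1)d}{2}$. Second, it telescopes the product $\prod_{i=1}^{d-1}\Gamma(\tfrac{i}{2}+1)/\Gamma(\tfrac{i}{2})$ into $\prod_{i=1}^{d-1}\tfrac{i}{2}=(d-1)!/2^{d-1}$, which cancels the factor $1/((d-1)!)^{b}=1/(d-1)!$. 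Third, it absorbs the leftover factor $\tfrac{(\gamma-1)d}{\gamma}$ into the argument of the remaining gamma, turning $\Gamma(\tfrac{d(\gamma-1)}{\gamma})$ into $\Gamma(1+d-\tfrac{d}{\gamma})$, which is exactly the numerator appearing in the corollary.

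Finally I would collect the monomial factors separately. The exponent of $c$ simplifies to $c^{d}\cdot c^{-d(\gamma-1)/\gamma}=c^{d/\gamma}$; the factor $\gamma$ accumulates to the power $d(\gamma-1)/\gamma$; and $\omega_{\gamma+1}$ appears to the net power $-d/\gamma$. I would then substitute $\omega_{\gamma+1}=2\pi^{(\gamma+1)/2}/\Gamma(\tfrac{\gamma+1}{2})$ and $\omega_d=2\pi^{d/2}/\Gamma(\tfrac d2)$, using $d\,\Gamma(\tfrac d2)=2\,\Gamma(1+\tfrac d2)$ to convert the prefactor $\omega_d/(d\cdot d!)$ into $\pi^{d/2}/(d!\,\Gamma(1+\tfrac d2))$. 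Matching the resulting powers of $2$ and $\pi$ against $2^{-d(1+1/\gamma)}$ and $\pi^{-d/(2\gamma)}$, and grouping $\gamma$ with $\Gamma(\tfrac{\gamma+1}{2})$ into $(\gamma/\Gamma(\tfrac{\gamma+1}{2}))^{d(\gamma-1)/\gamma}$, produces the stated expression. There is no conceptual obstacle here; the only hazard is clerical, namely dropping a power of $2$, $\pi$, or $\gamma$ while consolidating the several gamma factors, so I would organize the computation by tracking the $c$-, $\gamma$-, $2$-, $\pi$-, and $\Gamma$-contributions in separate columns before recombining them.
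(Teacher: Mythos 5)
Your proposal is correct and takes essentially the same route as the paper, which likewise derives the corollary from the a.s.\ identity $\Vol_d(\conv\Pi_{d,\gamma}(c))=\frac1d\,T_{1,1}^{d,d-1}(\conv\Pi_{d,\gamma}(c))$ (valid because $0$ lies a.s.\ in the interior) followed by substituting $a=b=1$ into Theorem~\ref{theo:T_expect}, whose two conditions indeed both collapse to $\gamma>1$ and yield $+\infty$ for $0<\gamma\leq 1$. Your gamma-function bookkeeping checks out in full: the ratio collapses to $\tfrac{(\gamma-1)d}{2}$, the telescoping product $(d-1)!/2^{d-1}$ cancels $1/(d-1)!$, the factor $\tfrac{(\gamma-1)d}{\gamma}$ absorbs into $\Gamma\bigl(1+d-\tfrac d\gamma\bigr)$, and the exponents of $c$, $2$, $\pi$, $\gamma$ and $\Gamma\bigl(\tfrac{\gamma+1}{2}\bigr)$ recombine exactly as stated.
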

We remark that in the special case $\gamma=2$ the formula become particularly simple:
$$
\E \Vol_d (\conv\Pi_{d,2}(c)) = \frac{1}{d!}\left(\frac c 2 \right)^{d/2}.
$$

\subsubsection{Expected intrinsic volumes}
 We compute the expected values of the intrinsic volumes $V_k(\conv\Pi_{d,\gamma}(c))$, $k\in\{0,\ldots,d\}$,  of the random polytopes $\conv\Pi_{d,\gamma}(c)$. We recall from~\cite{SchneiderBook} or~\cite[Eqn.~(6.11) on page 222]{SW08} that the \textit{intrinsic volume} of degree $k\in\{0,1,\ldots,d\}$ of a compact convex set $K\subset\R^d$ is given by
$$
V_k(K) := {d\choose k}{\kappa_d\over\kappa_k\kappa_{d-k}}\int_{G(d,k)}\Vol_k(P_L K)\,\nu_k(\dd L),
$$
where, as above, $G(d,k)$ is the Grassmannian of all $k$-dimensional linear subspaces of $\R^d$ with the unique Haar probability measure $\nu_k$ and $P_L K$ is the orthogonal projection of $K$ onto $L$. For example $V_0(K)=\ind_{\{K\neq\varnothing\}}$, $V_1(K)$ is a constant multiple of the mean width, $2V_{d-1}(K)$ is surface area and $V_d(K)$ is just the volume of $K$.

\begin{proposition}\label{cor:IntrinsicVolumes}
For every $\gamma>1$, $c>0$ and $k\in\{1,\ldots,d\}$ we have that
$$
\E V_k (\conv\Pi_{d,\gamma}(c))  = {d\choose k}{\kappa_d\over\kappa_k\kappa_{d-k}}{c^{k\over \gamma}\over 2^{k(1+{1\over \gamma})}\pi^{k\over 2\gamma}}\left({\gamma\over\Gamma({\gamma+1\over 2})}\right)^{k(\gamma-1)\over{\gamma}}{\Gamma(1+k-{k\over\gamma})\Gamma({\gamma-1\over 2})^k\over{\Gamma(1+{k\over 2})}}.
$$
For $0<\gamma \leq 1$ we have $\E V_k (\conv\Pi_{d,1}(c)) = +\infty $ for all $k\in\{1,\ldots,d\}$.
\end{proposition}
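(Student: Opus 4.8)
The plan is to reduce the statement to the expected volume formula of Corollary~\ref{cor:volume_expect} in dimension $k$, by combining the Kubota-type representation of $V_k$ with a projection-invariance property of the processes $\Pi_{d,\gamma}(c)$. Starting from the definition recalled just before the proposition,
$$
V_k(\conv\Pi_{d,\gamma}(c)) = \binom{d}{k}\frac{\kappa_d}{\kappa_k\kappa_{d-k}}\int_{G(d,k)}\Vol_k\big(P_L\,\conv\Pi_{d,\gamma}(c)\big)\,\nu_k(\dd L),
$$
I would first use that orthogonal projection commutes with taking convex hulls, $P_L\,\conv S=\conv P_L S$, so that $P_L\,\conv\Pi_{d,\gamma}(c)=\conv\big(P_L\Pi_{d,\gamma}(c)\big)$. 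By the mapping theorem for Poisson processes (Section~\ref{sec:BackgroundMaterial}), $P_L\Pi_{d,\gamma}(c)$ is again a Poisson process on $L\cong\R^k$ whose intensity is the image under $P_L$ of the intensity of $\Pi_{d,\gamma}(c)$.

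The decisive step is to identify this image intensity. Writing $x=y+z$ with $y=P_Lx\in L$ and $z\in L^\perp$ and using $\|x\|^2=\|y\|^2+\|z\|^2$, the projected density at $y$ is, after passing to polar coordinates in $L^\perp$ (which has dimension $d-k$) and rescaling $z=\|y\|t$,
$$
\frac{c}{\omega_{d+\gamma}}\int_{L^\perp}\frac{\dd z}{(\|y\|^2+\|z\|^2)^{(d+\gamma)/2}}
= \frac{c\,\omega_{d-k}}{\omega_{d+\gamma}}\Big(\int_0^\infty\frac{t^{d-k-1}\,\dd t}{(1+t^2)^{(d+\gamma)/2}}\Big)\frac{1}{\|y\|^{k+\gamma}}.
$$
The remaining integral equals $\tfrac12 B\big(\tfrac{d-k}{2},\tfrac{k+\gamma}{2}\big)$, and a short computation with $\omega_m=2\pi^{m/2}/\Gamma(m/2)$ shows that all powers of $\pi$ and all gamma factors cancel, giving the clean identity $\tfrac{\omega_{d-k}}{\omega_{d+\gamma}}\cdot\tfrac12 B\big(\tfrac{d-k}{2},\tfrac{k+\gamma}{2}\big)=\tfrac{1}{\omega_{k+\gamma}}$. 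Hence the projected density is exactly $\tfrac{c}{\omega_{k+\gamma}}\|y\|^{-(k+\gamma)}$; that is, $P_L\Pi_{d,\gamma}(c)\overset{d}{=}\Pi_{k,\gamma}(c)$ with the \emph{same} constant $c$ and the \emph{same} parameter $\gamma$, independently of the subspace $L$.

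With this in hand the proof is essentially finished. Since the law of $\conv\big(P_L\Pi_{d,\gamma}(c)\big)$ does not depend on $L$, taking expectations (Tonelli applies, the integrand being nonnegative) collapses the Grassmannian integral to
$$
\E V_k(\conv\Pi_{d,\gamma}(c)) = \binom{d}{k}\frac{\kappa_d}{\kappa_k\kappa_{d-k}}\,\E\Vol_k\big(\conv\Pi_{k,\gamma}(c)\big),
$$
and inserting Corollary~\ref{cor:volume_expect} with $d$ replaced by $k$ yields the asserted closed form. For $0<\gamma\le1$ the same reduction gives $\E V_k=\binom{d}{k}\frac{\kappa_d}{\kappa_k\kappa_{d-k}}\,\E\Vol_k(\conv\Pi_{k,\gamma}(c))=+\infty$ by the divergence part of Corollary~\ref{cor:volume_expect} applied in dimension $k\ge1$.

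The steps that require care are bookkeeping rather than conceptual. The first is verifying the cancellation of constants so that the projected process is $\Pi_{k,\gamma}(c)$ with $c$ \emph{unchanged}; this is the feature that makes the final formula close. The second is checking that $\conv\big(P_L\Pi_{d,\gamma}(c)\big)$ is almost surely a genuine $k$-dimensional polytope of finite volume, so that $\Vol_k$ is well defined and the expectation is meaningful: this follows from Corollary~\ref{cor:polytope} in dimension $k$, since the projected intensity is locally finite on $L\setminus\{0\}$, integrable near infinity, and divergent at the origin, exactly as required for $\Pi_{k,\gamma}(c)$.
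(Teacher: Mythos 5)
Your proposal is correct and follows essentially the same route as the paper: the paper's proof combines its Lemma~\ref{lem:projection} (projection invariance $P_L\Pi_{d,\gamma}(c)\overset{d}{=}\Pi_{k,\gamma}(c)$) with the Kubota-type definition of $V_k$, Fubini, and Corollary~\ref{cor:volume_expect} in dimension $k$, exactly as you do. The only cosmetic difference is that you establish the projection lemma by a single polar-coordinate computation in the $(d-k)$-dimensional complement (your beta-function identity checks out), whereas the paper projects along one dimension at a time and inducts; both computations yield the same clean invariance with $c$ and $\gamma$ unchanged.
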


\subsubsection{Symmetric convex hulls}
The symmetric convex hull $\sconv \Pi$ of a point process $\Pi$ is defined as the convex hull of the points of the form $\pm x$, where $x$ is a point of $\Pi$.  The next theorem evaluates the expected $T$-functional of $\sconv \Pi_{d,\gamma}(c)$.
\begin{theorem}\label{theo:T_expect_symm}
For every $\gamma>0$, $c>0$ and all $a,b\geq 0$ such that $(\gamma-b)d + b - a>0$ and $\gamma-b>0$, we have that
$$
\E T_{a,b}^{d,d-1}(\sconv\Pi_{d,\gamma}(c))
=
\E T_{a,b}^{d,d-1}(\conv\Pi_{d,\gamma}(2c)).
$$
\end{theorem}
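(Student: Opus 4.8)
The plan is to reduce the computation for $\sconv\Pi_{d,\gamma}(c)$ to the one already carried out for $\conv\Pi_{d,\gamma}(2c)$ in the proof of Theorem~\ref{theo:T_expect}, by writing both expectations as one and the same integral over facet-spanning $d$-tuples. Write $\Phi:=\Pi_{d,\gamma}(c)$, so that $\sconv\Phi=\conv(\Phi\cup(-\Phi))$, and let $\mu$ denote the intensity measure of $\Phi$, with density $c\,\omega_{d+\gamma}^{-1}\|x\|^{-(d+\gamma)}$. Since this density depends only on $\|x\|$, the measure $\mu$ is invariant under $x\mapsto-x$; in particular the Poisson process $\Psi:=\Pi_{d,\gamma}(2c)$ has intensity $\mu_\Psi=2\mu$. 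I first record the geometric facts that hold almost surely: the origin lies in the interior of $\conv\Phi\subseteq\sconv\Phi$, every facet of $\sconv\Phi$ is a simplex, and no facet contains an antipodal pair $\{y,-y\}$ (such a pair spans a segment through the origin, whereas a facet hyperplane misses the origin). Consequently every facet is spanned by points $\eps_1x_1,\dots,\eps_dx_d$ with $x_1,\dots,x_d$ distinct points of $\Phi$ and signs $\eps_i\in\{-1,+1\}$.

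The next step is to translate the facet condition into a condition on $\Phi$ alone. Fix $y_1=\eps_1x_1,\dots,y_d=\eps_dx_d$ spanning a hyperplane $H=\{z:\langle z,u\rangle=h\}$ with unit normal $u$ and $h=\dist(H)>0$. The $d$-tuple spans a facet of $\sconv\Phi$ if and only if all remaining points of $\Phi\cup(-\Phi)$ lie in the closed half-space $H^-=\{\langle z,u\rangle\le h\}$ containing the origin. The antipodes $-y_i$ satisfy $\langle-y_i,u\rangle=-h<h$, so they automatically lie in $H^-$; thus the condition reduces to requiring that every point $y\in\Phi$ other than $x_1,\dots,x_d$ satisfies $\{y,-y\}\subset H^-$, i.e.\ lies in the symmetric slab $S_H:=\{z:|\langle z,u\rangle|\le h\}$. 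Since $\R^d\setminus S_H=H^+\cup(-H^+)$ with $H^+:=\{\langle z,u\rangle>h\}$ and $\mu(-H^+)=\mu(H^+)$ by central symmetry, the Poisson void probability (given the spanning points) that all other points of $\Phi$ lie in $S_H$ equals $\exp(-\mu(\R^d\setminus S_H))=\exp(-2\mu(H^+))$.

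Now I apply the multivariate Mecke formula to the Poisson process $\Phi$, exactly as in the proof of Theorem~\ref{theo:T_expect}. Summing $\dist^a(F)\Vol_{d-1}^b(F)$ over facets $F$, passing to ordered spanning tuples (dividing by $d!$) and summing over the sign vectors $\eps\in\{-1,+1\}^d$, gives
\[
\E T_{a,b}^{d,d-1}(\sconv\Phi)=\frac1{d!}\int_{(\R^d)^d}\sum_{\eps\in\{-1,+1\}^d} w(\eps_1x_1,\dots,\eps_dx_d)\,e^{-2\mu(H_\eps^+)}\prod_{i=1}^d\mu(\dd x_i),
\]
where $w(y_1,\dots,y_d):=\dist^a(\aff\{y_1,\dots,y_d\})\,\Vol_{d-1}^b(\conv\{y_1,\dots,y_d\})$ and $H_\eps=\aff\{\eps_1x_1,\dots,\eps_dx_d\}$. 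Because $\mu$ is invariant under $x\mapsto-x$, the substitution $x_i\mapsto\eps_ix_i$ shows that each of the $2^d$ summands equals the same integral, whence
\[
\E T_{a,b}^{d,d-1}(\sconv\Phi)=\frac{2^d}{d!}\int_{(\R^d)^d} w(y_1,\dots,y_d)\,e^{-2\mu(H^+)}\prod_{i=1}^d\mu(\dd y_i).
\]
On the other hand, the same Mecke computation for $\Psi=\Pi_{d,\gamma}(2c)$ with intensity $2\mu$ yields $\E T_{a,b}^{d,d-1}(\conv\Psi)=\frac1{d!}\int w\,e^{-2\mu(H^+)}\prod_i(2\mu)(\dd y_i)$, and pulling the $2^d$ out of the $d$-fold product $\prod_i(2\mu)(\dd y_i)=2^d\prod_i\mu(\dd y_i)$ reproduces exactly the previous display; hence the two expectations coincide. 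The conditions $(\gamma-b)d+b-a>0$ and $\gamma-b>0$ are precisely those ensuring finiteness of this common integral in Theorem~\ref{theo:T_expect}, so the identity holds verbatim (both sides being infinite otherwise). The one step requiring genuine care — and the main obstacle — is the bookkeeping of the second paragraph: verifying that the correspondence between facets of $\sconv\Phi$ and sign-decorated $d$-tuples of $\Phi$ is exact (no antipodal pairs, reflected spanning points automatically admissible, general position so that all facets are simplices), so that the slab condition and the combinatorial factor $2^d$ are obtained with neither over- nor under-counting.
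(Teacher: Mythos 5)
Your proof is correct, and while its opening coincides with the paper's, the way you close the argument is genuinely different. Both you and the paper apply the multivariate Mecke equation~\eqref{eq:MeckeEquation} to sign-decorated ordered $d$-tuples of points of $\Pi_{d,\gamma}(c)$, use the central symmetry of the intensity via the substitution $\eps_i x_i \mapsto x_i$, and extract the factor $2^d$. From there the paper \emph{computes}: it replaces the non-absorption probability $p_{d,\gamma}$ by its symmetrized version $q_{d,\gamma}$, evaluates $q_{1,\gamma}(R)=\eee^{-\frac{2c}{\gamma\omega_{\gamma+1}}R^{-\gamma}}$ after reducing to one dimension, pushes through the Blaschke--Petkantschin computation of Theorem~\ref{theo:T_expect} to obtain the closed form~\eqref{eq:TabSymmeticProof}, and then compares it with~\eqref{eq:TabUnsymmetricProof}. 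You instead never evaluate anything: you translate the facet condition for $\sconv\Pi_{d,\gamma}(c)$ into the Poisson void event for the complement of the symmetric slab $S_H$, compute its probability $\eee^{-2\mu(H^+)}$ directly from $\mu(-H^+)=\mu(H^+)$, and match the resulting Mecke integrand term by term with that of $\conv\Pi_{d,\gamma}(2c)$, the sign factor being absorbed via $2^d\prod_i\mu(\dd y_i)=\prod_i(2\mu)(\dd y_i)$. This buys three things: the identity is established at the level of integrands, hence holds as an equality in $[0,+\infty]$ even without the finiteness hypotheses; you bypass both the projection stability (Lemma~\ref{lem:projection}) and the Blaschke--Petkantschin machinery entirely; and it isolates the true mechanism --- the symmetrized process $\Phi\cup(-\Phi)$ is \emph{not} a Poisson process with intensity $2\mu$, yet its facet-void probabilities (slab complements) agree with those of one (half-spaces under doubled intensity), which is exactly why the identity holds in expectation. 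What the paper's longer route buys in exchange is the explicit formula for $\E T_{a,b}^{d,d-1}(\sconv\Pi_{d,\gamma}(c))$ as a byproduct. Your bookkeeping in the second paragraph --- no antipodal pair on a facet since the facet hyperplane misses the origin, reflected spanning points automatically interior to $H^-$, and almost sure general position of sign-decorated tuples (a countable union of measure-zero events by Mecke) --- is precisely the care the step requires and is sound.
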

It is now straightforward to state the formulae for the expected facet number, volume, and intrinsic volumes of the symmetric convex hull of $\Pi_{d,\gamma}(c)$.

\section{Background material from stochastic geometry and theory of random measures}\label{sec:BackgroundMaterial}

In order to keep the paper self-contained we collect in this section some background material that is used throughout the proofs.

\subsection{Convergence of measures}
Let $S$ be a locally compact metric space. We denote by $\cM_S$ (respectively, $\cN_S$) the space of locally finite (respectively, locally finite integer-valued) measures on $S$. We supply $\cM_S$ and $\cN_S$ with the topology of vague convergence and recall that a sequence $(\mu_n)_{n\in\N}\subset\cM_S$ vaguely converges to a measure $\mu\in\cM_S$ provided that
$$
\lim_{n\to\infty}\int_S f(x)\,\mu_n(\dd x) = \int_S f(x)\,\mu(\dd x)
$$
for all continuous functions $f:S\to[0,\infty)$ with compact support. We shall write $\mu_n\overset{\rm v}{\longrightarrow}\mu$ in such a case. It is known from~\cite[Lemma 15.7.4]{Kallenberg:1983} that $\cN_S$ is a vaguely closed subset of $\cM_S$.

The vague topology turns $\cM_S$ and $\cN_S$ into Polish spaces (see~\cite[Lemma 15.7.7]{Kallenberg:1983}).
A \textit{random measure} (respectively, a \textit{point process}) is a random variable, defined on some probability space $(\Omega,\cA,\P)$, and taking values in $\cM_S$ (respectively, $\cN_S$).  In this paper we denote by  $\eta_n\overset{\rm w}{\longrightarrow}\eta$ the weak convergence of a sequence $(\eta_n)_{n\in\N}$ of random measures on $S$  to another random measure $\eta$, as $n\to\infty$.

\subsection{Poisson point processes}
Let $\mu$ be a locally finite measure on $S$ without atoms. A \textit{Poisson point process} $\Pi$ on $S$ with intensity measure $\mu$ is a random variable defined on some probability space $(\Omega,\cA,\P)$ taking values in the measurable space $\cN_S$ such that (i) $\Pi(B)$ is a Poisson random variable with mean $\mu(B)$ for each Borel set $B\subset S$ and (ii) the random variables $\Pi(B_1), \ldots, \Pi(B_n)$ are independent whenever the Borel sets $B_1,\ldots,B_n\subset S$ are pairwise disjoint. We remark that almost surely $\Pi$ can be represented as $\Pi=\sum_{i=1}^\kappa\delta_{x_i}$ with random points $x_1,x_2,\ldots\in S$ and a Poisson random variable $\kappa$ with mean $\mu(S)$ (which is interpreted as $+\infty$ if $\mu$ is not a finite measure). Here, $\delta_x$ stands for the unit mass at $x\in S$.

Let $k\in\N$ and denote by $\Pi_{\neq}^k$ the collection of $k$-tuples of distinct points charged by $\Pi$. It is a crucial fact that the Poisson point process $\Pi$ satisfies the multivariate \textit{Mecke equation}
\begin{multline}\label{eq:MeckeEquation}
\E \sum_{(x_1,\ldots,x_k)\in\Pi_{\neq}^k} f(x_1,\ldots,x_k;\Pi)\\
 = \int_{S}\cdots\int_{S} \E f(x_1,\ldots,x_k;\Pi+\delta_{x_1}+\ldots+\delta_{x_k})\,\mu(\dd x_1)\ldots\mu(\dd x_k)
\end{multline}
for any non-negative measurable function $f:S^k\times\cN_S\to\R$; see~\cite[Corollary 3.2.3]{SW08}. Here, $\E$ denotes expectation (i.e.\ integration) with respect to $\P$.

\subsection{Polytopes and cones}
For a set $A\subset\R^d$, we denote by $\conv A$ the convex hull of $A$. In particular, if $A$ is a finite set, $\conv A$ is called a (convex) \textit{polytope}. A \textit{face} of a polytope (or a general closed convex set) $P\subset\R^d$ is the intersection of $P$ with one of its supporting hyperplanes (which are hyperplanes $H$ intersecting the boundary of $P$ and having the property that $P$ is entirely contained in one of the closed half-spaces bounded by $H$). If the affine hull of a face has dimension $k\in\{0,1,\ldots,d-1\}$ we call it a \textit{$k$-face} or a face of dimension $k$. By $\cF_k(P)$ we denote the set of $k$-faces of a polytope $P$ and by $f_k(P)$ its cardinality. The \textit{$f$-vector} ${\bf f}(P)$ of $P$ is given by ${\bf f}(P):=(f_0(P),\ldots,f_{d-1}(P))$.

A (polyhedral) \textit{cone} is an intersection of finitely many closed half-spaces whose boundaries pass through the origin. If $C\subset\R^d$ is a polyhedral cone, we denote by $f_k(C)$ the number of $k$-dimensional faces of $C$, for $k\in \{0,1,\ldots,d-1\}$.

\subsection{Weak convergence of random compact convex sets}
We denote by $\cC^d$ the space of compact subsets of $\R^d$. The \textit{Hausdorff distance} ${\rm d}_H$ on this space is defined as
$$
{\rm d}_H(C,C') := \max\big\{\max_{x\in C}\min_{y\in C'}\|x-y\|,\max_{x\in C'}\min_{y\in C}\|x-y\|\big\},\qquad C,C'\in\cC^d,
$$
where $\|\,\cdot\,\|$ is the Euclidean norm on $\R^d$. We shall use the notation $C_n\overset{{\rm d}_H}{\longrightarrow}C_0$ to indicate that ${\rm d}_H(C_n,C_0)\to 0$, as $n\to\infty$, for a sequence $(C_n)_{n\in\N_0}\subset\cC^d$.

By $\cK^d$ we denote the space of compact convex subsets of $\R^d$, which is a closed subspace of $\cC^d$ with respect to the Hausdorff distance.
A \textit{random compact convex set} is a random variable $X$, defined on some probability space $(\Omega,\cA,\P)$, which takes values in the measurable space $\cK^d$. It is known from~\cite[Theorem 7.8]{molchanov_book} that the distribution of such a random set is uniquely determined by its containment functional
$$
C_X(K):=\P(X\subseteq K),\qquad K\in\cK^d.
$$
Distributional convergence of a sequence $(X_n)_{n\in\N}$ of random compact convex sets to another random compact convex set $X_0$ can be formulated in terms of the convergence of the containment functionals as follows. Namely, $X_n$ converges in distribution to $X_0$ weakly on $\cK^d$, as $n\to\infty$, if and only if $\lim_{n\to\infty} C_{X_n}(K) = C_{X_0}(K)$ for all $K\in\cK^d$ for which $C_{X_0}(K)=C_{X_0}({\rm int}(K))$, where ${\rm int}(K)$ denotes the interior of $K$; see~\cite[Theorem 7.12]{molchanov_book}. We shall indicate such convergence by $X_n\overset{\rm w}{\longrightarrow}X_0$ in this paper.

The next standard lemma will be used in the proof 
of Theorem~\ref{theo:conv_of_angle}. 
\begin{lemma}\label{lem:hausdorff_indicator}
Let $(K_n)_{n\in\N_0}\subset \mathcal K^d$ be deterministic compact convex sets such that $K_n \overset{{\rm d}_H}{\longrightarrow} K_0$.   Then, for every $x\in \R^d\backslash\partial K_0$ we have $\lim_{n\to\infty} \ind_{K_n}(x) = \ind_{K_0}(x)$.
\end{lemma}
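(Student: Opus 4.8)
The plan is to reduce the statement to two cases according to the position of $x$ relative to $K_0$, using throughout the elementary reformulation of Hausdorff convergence: ${\rm d}_H(K_n,K_0)\to 0$ means that for every $\delta>0$ there is an index $N$ such that, for all $n\ge N$, every point of $K_0$ lies within Euclidean distance $\delta$ of $K_n$, and every point of $K_n$ lies within distance $\delta$ of $K_0$. Since $K_0$ is compact and hence closed, the sets $\operatorname{int}K_0$, $\bd K_0$ and $\R^d\setminus K_0$ partition $\R^d$; thus a point $x\notin\bd K_0$ satisfies either $x\in\operatorname{int}K_0$ (so $\ind_{K_0}(x)=1$) or $x\in\R^d\setminus K_0$ (so $\ind_{K_0}(x)=0$). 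If $K_0$ happens to be lower-dimensional, then $\operatorname{int}K_0=\varnothing$ and only the second case arises. In each case I would show that $\ind_{K_n}(x)$ coincides with $\ind_{K_0}(x)$ for all large enough $n$, which proves the claim. (Since $K_0\neq\varnothing$, Hausdorff convergence forces $K_n\neq\varnothing$ for large $n$, so the separation argument below is legitimate.)

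Suppose first that $x\in\R^d\setminus K_0$. As $K_0$ is compact, $\rho:=\dist(x,K_0)>0$. I would fix $\delta\in(0,\rho)$ and take $N$ as above. For $n\ge N$, every point of $K_n$ lies within distance $\delta$ of $K_0$; hence if $x$ belonged to $K_n$ we would get $\dist(x,K_0)\le\delta<\rho$, a contradiction. Therefore $x\notin K_n$ and $\ind_{K_n}(x)=0=\ind_{K_0}(x)$ for all $n\ge N$.

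Now suppose $x\in\operatorname{int}K_0$, and pick $\varepsilon>0$ with the closed ball of radius $\varepsilon$ about $x$ contained in $K_0$. I would fix $\delta\in(0,\varepsilon)$ and take $N$ as above, and claim $x\in K_n$ for all $n\ge N$. Assuming otherwise, $K_n$ being a nonempty closed convex set not containing $x$, there is a unit vector $u$ with $\langle u,y\rangle<\langle u,x\rangle$ for every $y\in K_n$. The point $p:=x+\varepsilon u$ lies in $K_0$, so by Hausdorff convergence there is $z\in K_n$ with $\|z-p\|\le\delta$; using $\|u\|=1$ and the Cauchy--Schwarz inequality,
\[
\langle u,x\rangle > \langle u,z\rangle \ge \langle u,p\rangle-\|z-p\| \ge \langle u,x\rangle+\varepsilon-\delta,
\]
whence $\delta>\varepsilon$, contradicting the choice of $\delta$. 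Thus $x\in K_n$ and $\ind_{K_n}(x)=1=\ind_{K_0}(x)$ for $n\ge N$.

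The only genuinely convex-geometric step is this last separation argument, which is also precisely where the hypothesis $x\notin\bd K_0$ is essential: on the boundary one can place the approximating sets alternately just inside and just outside $x$, so the indicator need not converge there. I do not expect any step to present a serious obstacle; the only care required is to invoke the two inclusion directions of Hausdorff convergence in the right places (the inclusion $K_0\subseteq K_n+\delta\BB^d$ for the interior case and $K_n\subseteq K_0+\delta\BB^d$ for the exterior case).
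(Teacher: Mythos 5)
Your proof is correct. It follows the same overall decomposition as the paper (the exterior case $x\notin K_0$ and the interior case $x\in\operatorname{int}K_0$, with a hyperplane separation argument carrying the weight in the latter), but the execution differs in both cases in ways worth noting. For the exterior case the paper separates $x$ from $K_0$ by a hyperplane and then argues that an $\eps$-neighbourhood of $K_0$, hence eventually $K_n$, stays on the far side; you dispense with the hyperplane entirely and use only the metric inclusion $K_n\subseteq K_0+\delta\BB^d$ together with $\dist(x,K_0)>0$, which is more elementary. For the interior case the paper argues by contradiction over infinitely many $n$: it separates with unit normals $\theta_n$, extracts a convergent subsequence $\theta_n\to\theta$ (a compactness step), and tests Hausdorff convergence against the \emph{fixed} point $\eps\theta\in K_0$. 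You instead work with each large $n$ individually, letting the test point $p=x+\eps u$ depend on the separating direction $u=u_n$ for that $n$, and the uniform two-sided Hausdorff bound $K_0\subseteq K_n+\delta\BB^d$ immediately yields $\delta>\eps$, a contradiction. This removes the subsequence extraction and makes the argument quantitative (it shows $x\in K_n$ as soon as ${\rm d}_H(K_n,K_0)<\eps$), at no cost in generality; both routes rest on the same separation theorem, so the difference is one of economy rather than of substance.
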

\begin{proof}
Assume first $x\notin K_0$. Then there is a hyperplane $H$ such that $x$ and $K_0$ are contained in different open half-spaces $H_{>0}$ and $H_{<0}$ defined by $H$. For sufficiently small $\eps>0$, the $\eps$-neighborhood of $K_0$ is still contained in $H_{<0}$. Hence, for sufficiently large $n$, we have $K_n \subset H_{<0}$ and at the same time $x\in H_{>0}$. It follows that $\ind_{K_n}(x) = 0 = \ind_{K_0}(x)$ for sufficiently large $n$, which proves the claim. Suppose now that $x$ is in the interior of $K_0$ and without loss of generality that $x=0$.  We argue by contradiction and assume that $0\notin K_n$ for infinitely many $n$. By the hyperplane separation theorem, there is a unit vector $\theta_n\in\R^d$ such that $\langle z, \theta_n\rangle <0$ for all $z\in K_n$. By passing to a subsequence we may assume that $\theta_n\to \theta$ for some unit vector $\theta\in \R^d$. Since $0$ is in the interior of $K_0$, we can find $\eps>0$ such that $\eps \theta \in K_0$. The distance between $\eps \theta$ and $K_n$ is bounded from below by the distance between $\eps \theta$ and the half-space $\{z\in \R^d\colon \langle z, \theta_n \rangle <0\}$ containing $K_n$. Thus, the distance between $\eps \theta$ and $K_n$ is at least $\langle \eps \theta, \theta_n \rangle$ which is  larger than  $\eps /2$ for sufficiently large $n$. Therefore, $\eps \theta \in K_0$ but at the same time $\eps \theta$ is not contained in the $\eps/2$-neighborhood of $K_n$, a contradiction to the assumption $K_n \to K_0$ in the Hausdorff distance.
\end{proof}

When dealing with weak convergence, we shall frequently make use of the following Skorokhod representation theorem~\cite[Theorem~4.30 on p.~79]{KallenbergFMP}: Assume that $(X_n)_{n\in\N_0}$ are random elements with values in a separable metric space and $X_n$ converges weakly to $X_0$ as $n\to\infty$. Then there exist random elements $(X_n')_{n\in\N_0}$ defined on a common probability space $(\Omega',\mathcal{F}',\mathbb{P}')$ such that $X_n'$ has the same distribution as $X_n$ for all $n\in\N_0$, and $X_n'$ converges to $X_0'$ $\mathbb{P}'$-a.s.

\subsection{The affine Blaschke--Petkantschin formula}

For $k\in\{0,1,\ldots,d\}$ we let $G(d,k)$ and $A(d,k)$ be the spaces of $k$-dimensional linear and affine subspaces of $\R^d$, respectively. By $\nu_k$ we denote the unique probability measure on $G(d,k)$ which is invariant under the action of ${\rm SO}(d)$. The invariant measure $\mu_k$ on $A(d,k)$ is then given by
\begin{equation}\label{eq:DefMeasureMuk}
\mu_k(\,\cdot\,) = \int_{G(d,k)}\int_{L^\bot}{\bf 1}\{L+x\in\,\cdot\,\}\,\lambda_{L^\bot}(\dd x) \nu_{k}(\dd L),
\end{equation}
where $\lambda_{L^\bot}$ denotes the Lebesgue measure on $L^\bot$; see~\cite[pp.~168--169]{SW08}. Similarly, we shall write $\lambda_E$ for the Lebesgue measure on $E\in A(d,k)$.

The \textit{affine Blaschke--Petkantschin} formula is a so-called integral-geometric transformation formula and reads as follows; see~\cite[Theorem 7.2.7]{SW08}. For any non-negative measurable function $f:(\R^d)^{k+1}\to\R$ one has that
\begin{align}\label{eq:BlaschkePetkantschin}
&\int_{(\R^d)^{k+1}}f(x_0,\ldots,x_k)\,\dd(x_0,\ldots,x_k)\\
\nonumber & = b_{d,k}(k!)^{d-k}\int_{A(d,k)}\int_{E^{k+1}}f(x_0,\ldots,x_k)\,\Delta_k(x_0,\ldots,x_k)^{d-k}\,\lambda_E^{k+1}(\dd(x_0,\ldots,x_k))\mu_k(\dd E),
\end{align}
where the constant $b_{d,k}$ is given by
$$
b_{d,k} = \frac{\omega_{d+1-k}\cdots\omega_d}{\omega_1\cdots\omega_k}.
$$

\section{Proofs: Weak limit theorems and convergence of moments}\label{sec:ProofsWeakLimitTheorems}
\subsection{Continuity of functionals}
Our next lemma is an essential ingredient in the proof of Theorem~\ref{theo:conv_of_polytopes}, Theorem~\ref{theo:conv_of_faces} and Theorem~\ref{theo:conv_of_angle}. Let us recall that we denote by
$\cN:=\cN_{\R^d\cup\{\infty\}\backslash\{0\}}$ the space of locally finite integer-valued measures on $\R^d\cup\{\infty\}\backslash\{0\}$, where $\R^d\cup\{\infty\}$ is a one-point compactification of $\R^d$.

\begin{lemma}\label{lem:basic_convergence}
Assume that $(\eta_n)_{n\in\mathbb{N}_0}$ is a sequence of deterministic measures in $\cN$ and suppose that $\eta_n \overset{\rm v}{\longrightarrow}  \eta_0$, as $n\to\infty$. Suppose further that $\eta_0$ satisfies $\eta_0(\{\infty\}) =0$ and that the following two conditions are satisfied:
\begin{itemize}
\item[(a)] $\eta_0(H_+)>0$ for every open half-space $H_{+}\subset \mathbb{R}^d$ such that $0\in \partial H_+$,
\item[(b)] the atoms of $\eta_0$ are in general position, that is, no $k+2$ atoms of $\eta_0$ lie in the same $k$-dimensional affine subspace for all $k=1,\ldots,d-1$.
\end{itemize}
Then, $\conv \eta_0$ is a convex polytope containing $0$ in its interior. Moreover, as $n\to\infty$, we have the convergence
$$
\conv\eta_n \overset{{\rm d}_H}{\longrightarrow}  \conv\eta_0
$$
on the space $\cK^d$ as well as the convergence of the $f$-vectors
$$
{\bf f}(\conv\eta_n) \longrightarrow {\bf f}(\conv\eta_0).
$$
\end{lemma}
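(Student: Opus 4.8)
The plan is to establish the three assertions in the order stated; the mechanism throughout is that, although vague convergence gives no control over the possibly infinitely many atoms of $\eta_n$ that accumulate at the deleted origin, condition (a) forces $0$ into the interior of $\conv\eta_0$, so those atoms are eventually swallowed by the hull and never become vertices. To see that $\conv\eta_0$ is a polytope with $0$ in its interior, I would note that for each direction $u\in\Sd$ condition (a), applied to the half-space $\{x:\langle x,u\rangle>0\}$, yields an atom $a$ with $\langle a,u\rangle>0$, an inequality stable under small perturbations of $u$; by compactness of $\Sd$ finitely many atoms $a_1,\dots,a_m$ suffice, so that every $u$ has $\langle a_i,u\rangle>0$ for some $i$, which is precisely $0\in{\rm int}\,\conv\{a_1,\dots,a_m\}$. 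Fixing $\rho_0>0$ with $B(0,\rho_0)\subseteq\conv\{a_1,\dots,a_m\}$ and $\rho_0<\min_i\|a_i\|$, local finiteness of $\eta_0$ away from $0$ leaves only finitely many atoms outside $B(0,\rho_0)$, while every atom inside $B(0,\rho_0)$ already lies in $\conv\{a_1,\dots,a_m\}$ and is redundant; hence $\conv\eta_0$ is the convex hull of finitely many points and contains $0$ in its interior. General position (b) with $k=d-1$ further ensures that no non-vertex atom lies on $\partial\conv\eta_0$, so $\conv\eta_0$ is simplicial.

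Next I would deduce the Hausdorff convergence. Vague convergence provides, for each atom $v$ of $\eta_0$, atoms of $\eta_n$ converging to $v$; applying this to the finitely many vertices $v_1,\dots,v_p$ of $\conv\eta_0$ produces $v_j^{(n)}\to v_j$ with $\conv\{v_1^{(n)},\dots,v_p^{(n)}\}\subseteq\conv\eta_n$, which gives the inner bound $\conv\eta_0\subseteq(\conv\eta_n)_\eps$ for large $n$. For the outer bound I would use a test function equal to $1$ near $\infty$ to show, via $\eta_0(\{\infty\})=0$, that no mass of $\eta_n$ escapes to infinity, and then that any compact set bounded away from $0$ on which $\eta_0$ vanishes eventually carries no atoms of $\eta_n$; taking this set to be the complement of the $\eps$-neighborhood of $\conv\eta_0$ intersected with a large ball shows $\conv\eta_n\subseteq(\conv\eta_0)_\eps$ for large $n$. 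Together these yield $\conv\eta_n\overset{{\rm d}_H}{\longrightarrow}\conv\eta_0$.

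Finally, for the convergence of $f$-vectors I would show that, for large $n$, $\conv\eta_n=\conv\{v_1^{(n)},\dots,v_p^{(n)}\}$ with the same combinatorial type as $\conv\eta_0$. The \emph{absorption} of the uncontrolled atoms is the crux: since $B(0,\rho_0)\subseteq\conv\eta_0$, Hausdorff convergence gives $B(0,\rho_0/2)\subseteq\conv\eta_n$ eventually, so every atom of $\eta_n$ near $0$ is interior; likewise a small ball around each of the finitely many interior atoms of $\eta_0$ lies in $\conv\eta_n$ eventually, absorbing the atoms of $\eta_n$ near it; and near each vertex $v_j$ vague convergence leaves exactly one atom $v_j^{(n)}$ (general position rules out repeated atoms). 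Hence the only possible vertices are the $v_j^{(n)}$, so $\conv\eta_n=\conv\{v_1^{(n)},\dots,v_p^{(n)}\}$. Since these points are in general position and close to the $v_j$, I would invoke combinatorial stability of simplicial polytopes — a $d$-subset spans a facet iff all remaining points lie strictly on one side of its affine hull, an open condition preserved, together with affine independence, under small perturbations — to conclude that the perturbed polytope has the same face lattice, hence the same $f$-vector, as $\conv\eta_0$, giving ${\bf f}(\conv\eta_n)\to{\bf f}(\conv\eta_0)$.

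The main obstacle is the absorption argument of the last step: vague convergence on the punctured space says nothing about the infinitely many atoms of $\eta_n$ piling up at the excluded origin, so one cannot track them individually and must instead argue geometrically that a fixed ball around $0$, together with neighborhoods of the finitely many interior atoms, lies inside $\conv\eta_n$ for all large $n$, preventing any spurious vertex. Controlling the escape of mass to infinity, where $\eta_0(\{\infty\})=0$ enters, is a second and milder technical point.
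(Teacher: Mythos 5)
Your proof is correct, and it rests on the same two pillars as the paper's (condition (a) forces $0$ into the interior of $\conv\eta_0$, so the atoms accumulating at the origin are absorbed and never become vertices; general position stabilizes the face structure), but the architecture is genuinely different. The paper works throughout with \emph{all} atoms $A_1,\ldots,A_k$ of $\eta_0$ outside a small ball $B_r(0)$: it proves $\conv\eta_0=\conv\{A_1,\ldots,A_k\}$ by a separating-hyperplane trick, invokes Proposition 3.13 of \cite{sR87} to get that $\eta_n$ eventually has exactly $k$ atoms outside $\overline{B_r(0)}$ converging as a point set to $\{A_1,\ldots,A_k\}$, deduces Hausdorff convergence of the hulls from continuity of $\conv$ on $\cC^d$, and only then identifies $\conv\eta_n$ with the hull of those $k$ atoms by repeating the separation trick; the $f$-vector statement follows at the end. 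You invert this order: you first establish ${\rm d}_H$-convergence directly from vague convergence (inner bound from atoms approximating the vertices, outer bound from a test function that is $1$ near $\infty$ together with the fact that a compact set of $\eta_0$-measure zero eventually carries no $\eta_n$-atoms), and then use that convergence to run the absorption argument, which you need only for the $f$-vectors and which you localize at the vertices rather than at all atoms outside a ball. What your route buys is self-containedness -- no appeal to Resnick's proposition or to continuity of $\conv$ as a map on $\cC^d$ -- at the cost of a few standard facts left implicit: that $B(0,\rho_0)\subseteq K_0$ and ${\rm d}_H(K_n,K_0)\leq\eps$ with $K_n$ convex force $B(0,\rho_0-\eps)\subseteq K_n$ (a R{\aa}dstr\"om-cancellation or separation argument), and that the extreme points of $\conv\eta_n$ lie among its atoms (Minkowski's theorem), which is what upgrades ``no atom other than $v_1^{(n)},\ldots,v_p^{(n)}$ can be a vertex'' to $\conv\eta_n=\conv\{v_1^{(n)},\ldots,v_p^{(n)}\}$. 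One further point in your favour: your parenthetical remark that general position excludes repeated atoms is precisely the reading of (b) that both proofs need, since a vertex of multiplicity two would make the $f$-vector convergence fail.
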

\begin{proof}
Since the ball $B_r(0)$ is open, the set $\R^d\cup\{\infty\}\backslash B_r(0)$ is compact for every $r>0$ and thus $\eta_0$ has only a finite number of atoms outside of $B_r(0)$ by the local finiteness of $\eta_0$. Since $\eta_0(\{\infty\})=0$, the set of atoms of $\eta_0$ is bounded. Hence, $\conv \eta_0$ is a compact convex set. We show that it is in fact a polytope. By the supporting hyperplane theorem (see \cite[Chapter 1.3]{SchneiderBook}), Assumption (a) implies that the origin $0$ is an interior point of $\conv\eta_0$. Thus, there exists an open ball $B_{2r}(0)\subset \conv\eta_0$ with $r>0$. Let us denote by $A_1,\ldots,A_k$ the (finitely many) atoms of $\eta_0$ outside of $B_r(0)$. We claim that
\begin{equation}\label{eq:convex_hull_eta0}
\conv\eta_0=\conv\{A_1,\ldots,A_k\}
\end{equation}
and, in particular, $\conv\eta_0$ is a convex polytope.
To prove~\eqref{eq:convex_hull_eta0}, it suffices to show that $B_r(0)\subset \conv \{A_1,\ldots,A_k\}$. Assume that $x\in B_r(0)$ but $x\notin \conv \{A_1,\ldots,A_k\}$. By the separating hyperplane theorem (see again \cite[Chapter 1.3]{SchneiderBook}), there is an open half-space $G_+$ such that $x\notin G_+$ and  $\conv\{A_1,\ldots,A_k\} \subset G_+$. After applying an orthogonal transformation, we may assume that $G_+ = \{y\in\R^d \colon y_1 < a\}$, where $y_1$ is the first coordinate of $y\in\R^d$. Since $x\notin G_+$, its first coordinate satisfies $x_1\geq a$, hence $a < r$. Now,
$$
\conv \eta _0 \subset \conv (\{A_1,\ldots,A_k\} \cup B_r(0)) \subset \conv (G_+ \cup B_r(0)) \subset \{y\in \R^d\colon y_1\leq r\},
$$
which is in contradiction with $B_{2r}(0) \subset \conv \eta_0$. This proves~\eqref{eq:convex_hull_eta0}.

By Proposition 3.13 in~\cite{sR87}, the assumed vague convergence of $\eta_n$ to $\eta_0$, as $n\to\infty$, implies that for sufficiently large $n$, each $\eta_n$ has exactly $k$ atoms, say $\{A_1^{(n)},\ldots,A_k^{(n)}\}$, in $\R^d\backslash \overline{B_r(0)}$ and
\begin{equation}\label{eq:conv_of_extreme_points}
\{A_1^{(n)},\ldots,A_k^{(n)}\} \overset{{\rm d}_H}{\longrightarrow} \{A_1,\ldots,A_k\},
\end{equation}
as $n\to\infty$, on the space $\cC^d$. Since the mapping $\conv:\cC^d\to\cC^d$ is continuous with respect to the Hausdorff distance (see~\cite[Theorem 12.3.5]{SW08}), we also have that
$$
\conv\{A_1^{(n)},\ldots,A_k^{(n)}\} \overset{{\rm d}_H}{\longrightarrow} \conv \{A_1,\ldots,A_k\},
$$
as $n\to\infty$, on the space $\cC^d$ as well as on the space $\cK^d$. Now, since $B_{2r}(0)\subset\conv\eta_0=\conv\{A_1,\ldots,A_k\}$, this yields that $B_r(0)\subset \conv\{A_1^{(n)},\ldots,A_k^{(n)}\}$ for large $n$ and therefore,
\begin{equation}\label{eq:convex_hull_etan}
\conv\eta_n = \conv\{A_1^{(n)},\ldots,A_k^{(n)}\},
\end{equation}
for all sufficiently large $n$, which can be proved in the same way as~\eqref{eq:convex_hull_eta0}.

Assumption (b) implies that the points of $\{A_1,\ldots,A_k\}$ are in general position, which in conjunction with \eqref{eq:conv_of_extreme_points} yields that also the points of $\{A_1^{(n)},\ldots,A_k^{(n)}\}$ are in general position for sufficiently large $n$. Therefore, \eqref{eq:conv_of_extreme_points} implies that for each $k\in\{0,1,\ldots,d-1\}$ the number of $k$-dimensional faces of $\conv\{A_1^{(n)},\ldots,A_k^{(n)}\}$ is the same as the number of $k$-dimensional faces of
$\conv\{A_1,\ldots,A_k\}$ for all $k\in\{0,\ldots,d-1\}$ and large enough $n$. This completes the proof of the lemma.
\end{proof}

Since for each $\gamma>0$ and $c>0$, the Poisson point process $\Pi_{d,\gamma}(c)$ is an element of the space $\cN$ and almost surely satisfies the assumptions (a) and (b), Lemma~\ref{lem:basic_convergence} yields the following result.

\begin{corollary}\label{cor:polytope}
For each $\gamma>0$ and $c>0$, $\conv\Pi_{d,\gamma}(c)$ is almost surely a convex polytope containing $0$ in its interior.
\end{corollary}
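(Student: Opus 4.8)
The plan is to apply Lemma~\ref{lem:basic_convergence} pointwise. Although that lemma is phrased for a convergent sequence, its first conclusion --- that $\conv\eta_0$ is a convex polytope containing $0$ in its interior --- depends only on the single measure $\eta_0$ through the three properties $\eta_0(\{\infty\})=0$, (a) and (b), and not on the convergence $\eta_n\overset{\rm v}{\longrightarrow}\eta_0$. Hence it suffices to show that, when $\Pi_{d,\gamma}(c)$ is regarded as a random element of $\cN=\cN_{\R^d\cup\{\infty\}\bsl\{0\}}$, these three properties hold almost surely; intersecting these almost-sure events then yields the claim on a single event of full probability, on which the lemma delivers the corollary.

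The property $\eta_0(\{\infty\})=0$ is immediate: the intensity density $\|x\|^{-(d+\gamma)}$ is integrable near infinity, so almost surely only finitely many atoms lie outside any fixed ball and no atom sits at $\infty$. For property (a) I would exploit the rotational invariance of the intensity together with its divergence at the origin. Fix $\eps>0$; since $\int_{B_\eps(0)}\|x\|^{-(d+\gamma)}\,\dd x=+\infty$, the punctured ball $B_\eps(0)\bsl\{0\}$ carries infinite intensity mass and therefore contains infinitely many atoms almost surely. Passing to polar coordinates $x=r\theta$ with $\theta\in\Sd$, the intensity factorises into a radial part proportional to $r^{-(1+\gamma)}\,\dd r$ and the uniform surface measure on $\Sd$; by the marking theorem for Poisson processes the directions $\theta$ of the atoms are then i.i.d.\ uniform on $\Sd$. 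Being infinitely many, these directions are almost surely dense in $\Sd$, so every open half-space $H_+$ with $0\in\partial H_+$ contains (infinitely many) atoms, giving $\eta_0(H_+)>0$ simultaneously for all such $H_+$. This simultaneity over the uncountable family of half-spaces is the one genuine subtlety in the argument, and the density-of-directions argument is precisely what removes the need to handle each half-space separately.

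Property (b) follows from the multivariate Mecke equation~\eqref{eq:MeckeEquation} combined with absolute continuity of the intensity. For a fixed $k\in\{1,\ldots,d-1\}$, apply~\eqref{eq:MeckeEquation} with $f$ the indicator that $k+2$ distinct atoms lie in a common $k$-dimensional affine subspace; the expected number of such tuples then equals the integral over $(\R^d)^{k+2}$ of this indicator against the product of intensity densities. The configurations of $k+2$ points lying in a common $k$-flat form a Lebesgue-null subset of $(\R^d)^{k+2}$, and the intensity is absolutely continuous with respect to Lebesgue measure, so the integral vanishes. Hence the expected number of such tuples is zero, so almost surely no such tuple exists; taking the intersection over the finitely many values of $k$ yields general position almost surely. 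With all three properties established on a common event of full probability, the first conclusion of Lemma~\ref{lem:basic_convergence} applies and proves the corollary. The main obstacle I anticipate is purely the measurability/simultaneity bookkeeping in (a); properties $\eta_0(\{\infty\})=0$ and (b) are routine once the Mecke formula and the integrability of the density are invoked.
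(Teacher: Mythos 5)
Your proposal is correct and matches the paper's proof, which likewise verifies that $\Pi_{d,\gamma}(c)$ lies in $\cN$ and almost surely satisfies assumptions (a) and (b), and then invokes the polytope conclusion of Lemma~\ref{lem:basic_convergence}; the paper merely states these verifications as evident, whereas you fill in the details (density of directions via the polar factorisation for (a), the Mecke equation against a Lebesgue-null set for (b)), and both fillings are sound. Your observation that the first conclusion of the lemma depends only on $\eta_0$ is also the right reading (equivalently, one can apply the lemma to the constant sequence $\eta_n\equiv\eta_0$).
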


\subsection{Proofs of weak limit theorems}

We are now ready to prove Theorems~\ref{theo:conv_of_polytopes}, \ref{theo:conv_of_faces} and  \ref{theo:conv_of_angle}. The proofs rely on some auxiliary lemmas that are postponed to Section \ref{sec:AuxLemmas} at the end of the paper.

\begin{proof}[Proof of Theorem~\ref{theo:conv_of_polytopes}]
Recall that the mapping $\mathcal{P}:\mathbb{S}^{d}_{+}\cap \{x_0>0\}\to \mathbb{R}^{d}$ was defined by the equality \eqref{eq:DefMappingP}. For each $i\in\{1,\ldots,n\}$ let $\ell_i$ be the line in $\mathbb{R}^{d+1}$ passing through the origin and the point $U_i$. This line intersects the hyperplane $H_1:=\{x_0=1\}$ at the point $(1,\mathcal{P}(U_i))\in H_1$. This observation implies that
$$
C_n\cap H_1 = \conv \{(1,\mathcal{P}(U_i))\colon i=1,\ldots,n\}
$$
and, therefore,
\begin{equation}\label{eq:representation}
(T_nC_n\cap H_1) - e_0 = \conv\{n^{-1}\mathcal{P}(U_i)\colon i=1,\ldots,n\}.
\end{equation}
Hence, it is enough to show that
\begin{equation}\label{eq:resnick_conv_of_ch}
\conv \{n^{-1}\mathcal{P}(U_i)\colon i=1,\ldots,n\} \overset{\rm w}{\longrightarrow} \conv\Pi_{d,1}(2)
\end{equation}
on the space $\cK^d$. To prove this, we first note that as a consequence of Lemma~\ref{lem:lemma_reg_var} below and Proposition 3.21 in~\cite{sR87} we have
\begin{equation}\label{eq:resnick_conv_of_pp}
\sum_{i=1}^{n}\delta_{n^{-1}\mathcal{P}(U_i)} \overset{\rm w}{\longrightarrow} \Pi_{d,1}(2),\qquad\text{as }n\to\infty,
\end{equation}
weakly on the space $\mathcal N_{\R^d\backslash\{0\}}$. Now we can use the Skorokhod representation theorem to pass to the a.s.\ convergence on a new probability space, and then apply Lemma~\ref{lem:basic_convergence} pointwise. Going back to the original probability space, we  get the required convergence~\eqref{eq:resnick_conv_of_ch}.  The proof of Theorem~\ref{theo:conv_of_polytopes} is thus complete. 
\end{proof}

\begin{remark}
For $d=2$ the convergence \eqref{eq:resnick_conv_of_ch} also follows from Theorem 3.1 in~\cite{Davis+Mulrow+Resnick:1987}.
\end{remark}

\begin{proof}[Proof of Theorem~\ref{theo:conv_of_faces}]
Let $k\in\{1,\ldots,d\}$. From \eqref{eq:representation} we obtain the almost sure equality
$$
f_{k-1}(C_n \cap \mathbb S_+^d) = f_k(C_n)=f_{k-1}((T_nC_n\cap H_1)-e_0)=f_{k-1}( \conv\{n^{-1}\mathcal{P}(U_i)\colon i=1,\ldots,n\}).
$$
Passing in~\eqref{eq:resnick_conv_of_pp} to the a.s.\ convergence by the Skorokhod representation theorem, using Lemma~\ref{lem:basic_convergence} pointwise, and returning back to the original probability space yields 
$$
\mathbf f(\conv\{n^{-1}\mathcal{P}(U_i)\colon i=1,\ldots,n\})
 \overset{\rm d}{\longrightarrow}
\mathbf f(\conv\Pi_{d,1}(2)),
$$
which proves the desired statement. 
\end{proof}

\begin{proof}[Proof of Theorem~\ref{theo:conv_of_angle}]
We shall use the following alternative definition of the solid angle. For a convex cone $C\subset \{x_0\geq 0\} \subset \R^{d+1}$ the solid angle equals
$$
\alpha(C)={1\over 2}\,\mathbb{P}(U\in C\cap \mathbb{S}^d_{+}),
$$
where $U$ is a random vector with the uniform distribution on the half-sphere $\mathbb{S}^d_{+}$. We have
$$
2n\left(\frac{1}{2}-\alpha(C_n)\right)
=
n\big(1-\P(U\in C_n\cap\mathbb{S}_+^d | C_n) \big) = n\mathbb{P}(U\notin C_n\cap \mathbb{S}^d_{+}|C_n),
$$
where $U$ is independent of $C_n$ and $\P(\,\cdot\, |\,\cdot\,)$ denotes conditional probability.  Further,
\begin{align*}
n\mathbb{P}\big(U\notin C_n\cap \mathbb{S}^d_{+} | C_n\big) &=n\mathbb{P}\big((1,\mathcal{P}(U))\notin C_n\cap H_1| C_n\big)\\
&=n\mathbb{P}\big(\mathcal{P}(U)\notin \conv \{\mathcal{P}(U_i)\colon i=1,\ldots,n\}|U_1,\ldots,U_n\big)\\
&=\mu_n(\mathbb{R}^d\backslash \conv \{n^{-1}\mathcal{P}(U_i)\colon i=1,\ldots,n\})
,
\end{align*}
where the measure $\mu_n$ is given by $\mu_n(\cdot):=n\mathbb{P}(n^{-1}\mathcal{P}(U)\in \,\cdot\,)$. As a consequence of Proposition~\ref{prop:cauchy}, the Lebesgue density of $\mu_n$ is given by
\begin{equation}\label{eq:Densitypn}
p_n(x) = \frac{2n^{d+1}}{\omega_{d+1} (1+ n^2 \|x\|^2)^{\frac{d+1}{2}}}. 
\end{equation}
Denoting the random polytope $\conv \{n^{-1}\mathcal{P}(U_i)\colon i=1,\ldots,n\}$ by $L_n$,  we can write
$$
2n\left(\frac{1}{2}-\alpha(C_n)\right)
=\int_{\R^d}  (1- \ind_{L_n}(x)) p_n(x) \dd x. 
$$
Let also $L_0 := \conv \Pi_{d,1}(2)$. From~\eqref{eq:resnick_conv_of_ch} we know that $L_n$ converges to $L_0$ weakly on the space $\mathcal K^d$. By the Skorokhod representation theorem, on a new probability space we can define random convex sets $(L_n')_{n\in\N_0}$ such that $L_n'$ has the same distribution as $L_n$, for all $n\in\N_0$, and with probability one $L_n' \to L_0'$ in the Hausdorff metric. Let us fix some outcome $\omega$ in the new probability space outside the event where the convergence fails to hold or where $L_0'$ is not a polytope containing $0$ in its interior. The probability of this exceptional event is $0$; see Corollary~\ref{cor:polytope}.  With this convention, the deterministic polytopes $L_n'(\omega)$ converge to $L_0'(\omega)$ in the Hausdorff metric. From Lemma~\ref{lem:hausdorff_indicator} it follows that
$$ 
\lim_{n\to\infty} \ind_{L_n'(\omega)}(x) =  \ind_{L_0'(\omega)}(x)
\text{ for all } x\in \R^d \backslash \partial L_0'(\omega).
$$
Note that the Lebesgue measure of $\partial L_0'(\omega)$ is $0$ because $L_0'(\omega)$ is a polytope. The density $p_n(x)$ satisfies
$$
\lim_{n\to\infty} p_n(x) = \frac2{\omega_{d+1}} \frac{1}{\|x\|^{d+1}}
\quad
\text{and}
\quad
p_n(x) \leq \frac2{\omega_{d+1}} \frac{1}{\|x\|^{d+1}}
$$
for all  $x\in \R^d\backslash\{0\}$.
Taking everything together, we obtain  that for Lebesgue-a.e.\ $x\in \R^d$,
$$
\lim_{n\to\infty} (1- \ind_{L_n'(\omega)}(x)) p_n(x) = (1-\ind_{L_0'(\omega)}(x)) \frac2{\omega_{d+1}} \frac{1}{\|x\|^{d+1}}. 
$$
Also, for sufficiently large $n$, we have the integrable bound
$$
(1- \ind_{L_n'(\omega)}(x)) p_n(x) \leq  \ind_{\{\|x\|\geq r(\omega)/2\}} \frac2{\omega_{d+1}} \frac{1}{\|x\|^{d+1}},
$$
where $r(\omega)>0$ is the distance from $0$ to the boundary of $L_0'(\omega)$. The dominated convergence theorem yields
$$
\int_{\R^d}  (1- \ind_{L_n'(\omega)}(x)) p_n(x) \dd x 
\to 
\int_{\R^d}  (1-\ind_{L_0'(\omega)}(x)) \frac2{\omega_{d+1}} \frac{1}{\|x\|^{d+1}} \dd x 
=
\frac2{\omega_{d+1}} \int_{\R^d \backslash L_0'(\omega)}  \frac{\dd x }{\|x\|^{d+1}},
$$
as $n\to\infty$. 
We recall that this convergence holds for every outcome $\omega$ outside some event with probability zero. In particular, it implies the distributional convergence of the corresponding random variables. Returning back to the original probability space, we can replace $L_n'$ by $L_n$ for all $n\in\N_0$, thus obtaining
$$
2n\left(\frac{1}{2}-\alpha(C_n)\right) = \int_{\R^d}  (1- \ind_{L_n}(x)) p_n(x) \dd x 
 \overset{\rm d}{\longrightarrow}
\frac2{\omega_{d+1}} \int_{\R^d \backslash L_0}  \frac{\dd x }{\|x\|^{d+1}},
$$
as $n\to\infty$. Recall finally that $L_0 = \conv \Pi_{d,1}(2)$. The proof is complete. 
\end{proof}

\subsection{Convergence of moments: Proof of Theorem \ref{theo:ConvergenceOfExpectations}}
In view of Theorem \ref{theo:conv_of_faces} we need to show that the sequence $(f_{k}^m (C_n))_{n\in\N}$ is uniformly integrable for every $k=1,\ldots,d$ and $m\in\N$. This is equivalent to
\begin{equation}\label{eq:moment_boundedndess}
\sup_{n\in\N} \E f_{k}^m (C_n)<\infty
\end{equation}
for every $k=1,\ldots,d$ and $m\in\N$, because \eqref{eq:moment_boundedndess} for a fixed $m$ implies uniform integrability of $(f_{k}^\ell (C_n))_{n\in\N}$ for $0\leq\ell<m$.

To prove \eqref{eq:moment_boundedndess} we note that for an arbitrary (spherical) polytope  $P_n$ the number $f_k(P_n)$ of its $k$-dimensional faces satisfies
$$
f_k(P_n)\leq {f_0(P_n)\choose k+1}\leq f^{k+1}_0(P_n),\qquad k=0,\ldots,d-1.
$$
From this observation it follows that \eqref{eq:moment_boundedndess} is equivalent to
\begin{equation}\label{eq:moment_0_boundedndess}
\sup_{n\in\N} \E f_{0}^m (C_n\cap \bS_+^d)<\infty
\end{equation}
for every $m\in\N$. Recall that $\mathcal{P}:\mathbb{S}^{d}_{+}\cap \{x_0>0\}\to \mathbb{R}^{d}$ is the map defined by~\eqref{eq:DefMappingP}. Clearly, $f_{0} (C_n\cap \bS_+^d)$ coincides with the number of vertices of the convex hull of $\mathcal{P}(U_1),\ldots, \mathcal{P}(U_n)$ in $\R^d$. Write
\begin{align*}
\E f_{0}^m (C_n\cap \bS_+^d)&=
\E \left(\sum_{i=1}^{n}\ind_{\{\mathcal{P}(U_i)\notin \conv\{\mathcal{P}(U_j),j\neq i,j=1,\ldots,n\}\}}\right)^m\\
&=\sum_{i_1=1}^{n}\cdots\sum_{i_m=1}^{n}\P\left(\mathcal{P}(U_{i_k})\notin \conv\{\mathcal{P}(U_j),j\neq i_k,j=1,\ldots,n\},k=1,\ldots,m\right)\\
&\leq \sum_{i_1=1}^{n}\cdots\sum_{i_m=1}^{n}\P\left(\mathcal{P}(U_{i_1}),\mathcal{P}(U_{i_2}),\ldots,\mathcal{P}(U_{i_m})\notin \conv\{\mathcal{P}(U_j),j \notin\{i_1,i_2,\ldots,i_m\}\}\right).
\end{align*}
In view of this representation, the inequality \eqref{eq:moment_0_boundedndess} follows once we can show
that
$$
\P\left(\mathcal{P}(U_{1}),\mathcal{P}(U_{2}),\ldots,\mathcal{P}(U_{k})\notin \conv\{\mathcal{P}(U_{k+1}),\ldots,\mathcal{P}(U_n)\}\right)=O(n^{-k}),
$$
as $n\to\infty$, for every fixed $k\in\N$, where the constant in the Landau term $O(\,\cdot\,)$ might depend on $k$.  Denote by $K_n\subset\R^d$ the convex hull of the random points $\cP(U_1),\ldots,\cP(U_n)$. Fix $k\in\N$ and let $Y_1,Y_2,\ldots,Y_k$ be independent random variables  identically distributed according to the Cauchy-type distribution described in Proposition \ref{prop:cauchy}.  Assume also that $Y_1,\ldots,Y_k$ are independent of $K_n$.  We are going to show that, as $n\to\infty$,
$$
n^k\P\left(Y_1,\ldots,Y_k\notin K_n\right)=O(1).
$$
Note that the left-hand side can be written as
$$
n^k\P\left(Y_1,\ldots,Y_k\notin K_n\right)=n^k\E \left(\P^k \left(Y_1\notin K_n|K_n\right)\right)=\E \left(\frac{2n}{\omega_{d+1}}\int_{\R^d\backslash K_n}\frac{\dd x}{(1+\|x\|)^{\frac{d+1}{2}}}\right)^k.
$$
It suffices to show that
$$
\E\left[ \left(\frac{2n}{\omega_{d+1}}\int_{\R^d\backslash K_n}\frac{\dd x}{(1+\|x\|)^{\frac{d+1}{2}}}\right)^k \ind_{\{0\in K_n\}}\right] = O(1),
$$
as $n\to\infty$, because $\P(0\notin K_n) = O(\eee^{-c n})$ by Lemma~\ref{lem:NonabsorbtionCauchyPolytope_sphere}, below, with $r\downarrow 0$.
To bound the latter integral introduce the random variable
$$
\theta_n:=\min_{x\in\partial K_n}\|x\|
$$
and note that
$$
\E \left[\left(\frac{2n}{\omega_{d+1}}\int_{\R^d\backslash K_n}\frac{\dd x}{(1+\|x\|^2)^{\frac{d+1}{2}}}\right)^k \ind_{\{0\in K_n\}}\right]
\leq
\E \left(\frac{2n}{\omega_{d+1}}\int_{\R^d\backslash B_{\theta_n}(0)}\frac{\dd x}{(1+\|x\|^2)^{\frac{d+1}{2}}}\right)^k,
$$
where $B_{\theta_n}(0)$ is the ball of radius $\theta_n$ centered at the origin. From now on, for the sake of brevity, any constants only depending on  $d$ and $k$ will be denoted by $c_1$, $c_2$ etc.

Passing to polar coordinates in the expression for the above expectation we obtain
$$
I(n) :=
\E \left(\frac{2n}{\omega_{d+1}}\int_{\R^d\backslash  B_{\theta_n}(0)}\frac{\dd x}{(1+\|x\|^2)^{\frac{d+1}{2}}}\right)^k
=
\E \left(c_1 n \int_{\theta_n}^{\infty}\frac{r^{d-1}\dd r}{(1+r^2)^{\frac{d+1}{2}}}\right)^k.
$$
Note that
$$
\frac{r^{d-1}}{(1+r^2)^{\frac{d+1}{2}}}\leq \frac{1}{\max\{r^2,1\}},\qquad r>0,
$$
and therefore
$$
\int_{\theta_n}^{\infty}\frac{r^{d-1}\dd r}{(1+r^2)^{\frac{d+1}{2}}}\leq \int_{\theta_n}^{\infty} \frac{\dd r}{\max\{r^2,1\}}=
\begin{cases}
2-\theta_n, & \theta_n\leq 1,\\
\frac{1}{\theta_n}, & \theta_n>1.
\end{cases}
$$
Hence,
\begin{align*}
I(n)&\leq 2^kc_1^k n^k\P(\theta_n < 1)+ c_1^k \E \left[\left(\frac{n}{\theta_n}\right)^k\ind_{\{\theta_n\geq 1\}}\right]\\
&\leq 2^kc_1^k n^k\P(K_n\not\supset B_{1}(0))+ c_1^k \int_0^{\infty}\P\left(\left(\frac{n}{\theta_n}\right)^k\ind_{\{\theta_n\geq 1\}}>x\right)\dd x\\
&=2^kc_1^k n^k\P(K_n\not\supset B_{1}(0))+ c_1^k \int_0^{n^k}\P\left(1\leq \theta_n<nx^{-1/k}\right)\dd x\\
&\leq 2^kc_1^k n^k\P(K_n\not\supset B_{1}(0))+ c_1^k \int_0^{n^k}\P\left(K_n\not\supset B_{nx^{-1/k}}(0)\right)\dd x\\
&= 2^k c_1^k n^k\P\left(\frac{K_n}{n}\not\supset B_{n^{-1}}(0)\right)+ c_1^k \int_0^{n^k}\P\left(\frac{K_n}{n}\not\supset B_{x^{-1/k}}(0)\right)\dd x.
\end{align*}
Now we apply Lemma \ref{lem:NonabsorbtionCauchyPolytope_sphere} to bound both summands to conclude that
\begin{align*}
I(n)\leq c_2n^k \exp\{-c_3n\}+ c_2 \int_0^{n^k}\exp\left\{-\frac{1}{c_4x^{-1/k}+c_5n^{-1}}\right\}\dd x.
\end{align*}
The first summand, clearly, converges to zero and it remains to show that the integral on the right-hand side is bounded by a constant not depending on $n$. If $x\leq (c_4c_5^{-1}n)^k$, then $c_4x^{-1/k}+c_5n^{-1}\leq 2c_4x^{-1/k}$ and we have
\begin{align*}
\int_0^{(c_4c_5^{-1}n)^k}\exp\left\{-\frac{1}{c_4x^{-1/k}+c_5n^{-1}}\right\}\dd x\leq \int_0^{(c_4c_5^{-1}n)^k}\exp\left\{-\frac{1}{2c_4x^{-1/k}}\right\}\dd x\\
\leq \int_0^{\infty}\exp\left\{-\frac{1}{2c_4x^{-1/k}}\right\}\dd x<\infty.
\end{align*}
On the other hand, if $x\in ((c_4c_5^{-1}n)^k, n^k]$ (provided this interval is not empty), we have
$$
\int_{(c_4c_5^{-1}n)^k}^{n^k}\exp\left\{-\frac{1}{c_4x^{-1/k}+c_5n^{-1}}\right\}\dd x\leq \int_{(c_4c_5^{-1}n)^k}^{n^k}\exp\left\{-\frac{1}{c_5 n^{-1}+c_5n^{-1}}\right\}\dd x=O(n^k e^{-n/(2c_5)}),
$$
as $n\to\infty$. This completes the proof of the moment convergence.

\vspace*{2mm}
The formula for the expectation $\E f_{k-1}(\conv \Pi_{d,1}(2))$ in Theorem~\ref{theo:ConvergenceOfExpectations} follows from the Mecke equation \eqref{eq:MeckeEquation} applied with the function $f(x_1,\ldots,x_k;\Pi)=\ind_{\{(x_1,\ldots,x_k)\in \mathcal{F}_{k-1}(\conv \Pi)\}}$. The proof of Theorem~\ref{theo:ConvergenceOfExpectations} is complete.
\hfill $\Box$

\section{Proofs: Conic intrinsic volumes}\label{proofs:conic}

In this section we prove Theorems~\ref{theo:buchta} and~\ref{thm:ConicalIntrinsicVolumes}.
First of all, we prove the relationship \eqref{eq:ConicalMeanProjectionIntVol} between the conic mean projection volumes and the conic intrinsic volumes.
\begin{lemma}\label{lem:ConicalMeanProjection}
	For $k\in\{0,1,\ldots,d\}$ and a cone $C\subset\R^{d+1}$ we have that
	$$
	w_{k+1}(C) = \sum_{i=k+1}^{d+1}v_i(C).
	$$
\end{lemma}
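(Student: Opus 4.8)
The plan is to first establish the purely geometric identity $w_{k+1}(C)=h_{k+1}(C)+h_{k+2}(C)$ and then to read off the lemma from the conic Crofton formula \eqref{eq:ConicalIntVolGrassmannAngle}. Indeed, once that identity is available, writing
$$
h_{k+1}(C) = \sum_{\substack{i\geq 1\\ i\text{ odd}}} v_{k+i}(C), \qquad h_{k+2}(C) = \sum_{\substack{i\geq 1\\ i\text{ odd}}} v_{k+1+i}(C)
$$
and adding the two finite sums interleaves the odd- and even-index contributions into the full tail $\sum_{i=k+1}^{d+1} v_i(C)$, which is exactly the asserted value of $w_{k+1}(C)$. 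Thus all the geometry is concentrated in $w_{k+1}(C)=h_{k+1}(C)+h_{k+2}(C)$.

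To reach this, I would first reinterpret the defining integral as a mean solid angle. Since $P_L(C)$ is a convex cone contained in $L$, the ratio $\kappa_{k+1}^{-1}\Vol_{k+1}(P_L(C)\cap\mathbb{B}^{d+1})$ equals the normalized solid angle of $P_L(C)$ inside $L$, which I denote $\alpha_L(P_LC)$, so that $w_{k+1}(C)=\int_{G(d+1,k+1)}\alpha_L(P_LC)\,\nu_{k+1}(\dd L)$. Realizing this solid angle through a standard Gaussian vector $g$ in $\R^{d+1}$ and an independent $L\sim\nu_{k+1}$, and using that $P_L g\in P_L(C)$ precisely when the fibre $g+L^\perp$ meets $C$, I would rewrite
$$
w_{k+1}(C) = \P\big(P_L g\in P_L(C)\big), \qquad \big\{P_L g\in P_L(C)\big\}=\big\{(g+L^\perp)\cap C\neq\varnothing\big\}.
$$

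Next I would link this to Grassmann angles via the auxiliary subspace $M:=\operatorname{lin}(P_L g)\oplus L^\perp$. Conditionally on $L$ the line $\operatorname{lin}(P_L g)$ is uniform in $L$, so $M$ is $\nu_{d+1-k}$-distributed and $\P(C\cap M\neq\{0\})=2h_{k+1}(C)$ by \eqref{eq:h_def}; likewise $L^\perp\sim\nu_{d-k}$ gives $\P(C\cap L^\perp\neq\{0\})=2h_{k+2}(C)$. Splitting a nonzero point of $C\cap M$ according to the sign of its $P_L g$-component yields
$$
\{C\cap M\neq\{0\}\}=\{(g+L^\perp)\cap C\neq\varnothing\}\cup\{(-g+L^\perp)\cap C\neq\varnothing\}\cup\{C\cap L^\perp\neq\{0\}\}.
$$
Conditioning on $L$, the last event is independent of $g$, while for the first two an inclusion--exclusion together with the symmetry of the Gaussian gives, on the set $\{C\cap L^\perp=\{0\}\}$, the conditional probability $2\alpha_L(P_LC)-\ind_{\{\overline{P_LC}=L\}}$. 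Integrating over $L$ produces
$$
2h_{k+1}(C)=2h_{k+2}(C)+2\,\E\big[\ind_{\{C\cap L^\perp=\{0\}\}}\,\alpha_L(P_LC)\big]-\P\big(\{C\cap L^\perp=\{0\}\}\cap\{\overline{P_LC}=L\}\big),
$$
and comparison with $w_{k+1}(C)=\E[\alpha_L(P_LC)]$ reduces the whole claim to the single residual equality $2\,\E[\ind_{\{C\cap L^\perp\neq\{0\}\}}(1-\alpha_L(P_LC))]=\P(\{C\cap L^\perp=\{0\}\}\cap\{\overline{P_LC}=L\})$.

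The main obstacle is exactly this residual equality, i.e. the correct accounting of the degenerate configurations in which the fibre $L^\perp$ already meets $C$ or the projection $P_LC$ fills $L$. I would treat it through conic polar duality: the relation $\overline{P_LC}=(C^\circ\cap L)^\circ$ shows that $\{\overline{P_LC}=L\}=\{C^\circ\cap L=\{0\}\}$ and that the two projection events can overlap only there, and running the same decomposition for the polar cone $C^\circ$ with $L$ and $L^\perp$ interchanged and using $v_i(C)=v_{d+1-i}(C^\circ)$ should match the two sides. I expect this duality step to be the only genuinely delicate point; note finally that for the cones $C_n$ studied here, which are almost surely pointed, both degenerate events are null, so the residual equality is vacuous and only the generic part of the argument is actually needed.
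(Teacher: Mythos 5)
Your overall strategy is sound and genuinely different from the paper's: the paper passes to the half-sphere, identifies $w_{k+1}(C)$ with the spherical mean projection volume $W_k(C\cap\mathbb{S}^d)$, and simply cites the spherical formula $W_k(K)=\sum_{i=k}^d v_i(K)$ from Schneider--Weil, whereas you aim at a self-contained Gaussian argument proving $w_{k+1}(C)=h_{k+1}(C)+h_{k+2}(C)$ first and then invoking the conic Crofton formula. Up to and including your displayed identity for $2h_{k+1}(C)$ everything checks out: the interpretation $w_{k+1}(C)=\E[\alpha_L(P_LC)]$, the fibre criterion $\{P_Lg\in P_LC\}=\{(g+L^\perp)\cap C\neq\varnothing\}$, the fact that $M=\operatorname{lin}(P_Lg)\oplus L^\perp$ is $\nu_{d+1-k}$-distributed, the three-event decomposition of $\{C\cap M\neq\{0\}\}$, and the inclusion--exclusion giving $2\alpha_L(P_LC)-\ind_{\{\overline{P_LC}=L\}}$ are all correct. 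The gap is at the end: the residual equality $2\,\E\bigl[\ind_{\{C\cap L^\perp\neq\{0\}\}}\bigl(1-\alpha_L(P_LC)\bigr)\bigr]=\P\bigl(\{C\cap L^\perp=\{0\}\}\cap\{\overline{P_LC}=L\}\bigr)$ is never proved (you only say a duality computation ``should'' match the sides), and your fallback claim --- that for the pointed cones $C_n$ both degenerate events are null, so the equality is vacuous --- is false. For a full-dimensional pointed cone and $k\leq d-1$ one has $\P(C\cap L^\perp\neq\{0\})=2h_{k+2}(C)>0$ and $\P(\overline{P_LC}=L)=\P(C^\circ\cap L=\{0\})>0$ (the polar $C^\circ$ is pointed, so an open, hence positive-measure, set of $(k+1)$-dimensional subspaces misses it). Neither event is null, for $C_n$ or for any full-dimensional cone.

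What is actually true, and what you would need to supply, is that \emph{both sides of the residual equality are zero} for every closed convex cone $C$ that is not a linear subspace, but for two different reasons. The right-hand side vanishes for deterministic reasons: if $\overline{P_LC}=L$ then $P_LC=L$ (a convex set and its closure have the same relative interior), i.e.\ $C+L^\perp=\R^{d+1}$; choosing $x_0\in C$ with $-x_0\notin C$ and writing $-x_0=c+m$ with $c\in C$, $m\in L^\perp$, one gets $0\neq x_0+c\in C\cap L^\perp$, so $\{\overline{P_LC}=L\}\subseteq\{C\cap L^\perp\neq\{0\}\}$ and the event on the right is empty. The left-hand side, by contrast, vanishes only \emph{almost surely}, and this genuinely requires a general-position (``touching probability zero'') argument: a.s.\ on $\{C\cap L^\perp\neq\{0\}\}$ the subspace $L^\perp$ meets $\relint C$, and then $P_LC$ contains a neighbourhood of the origin in $P_L\operatorname{lin}(C)$, which a.s.\ on this event equals $L$, forcing $\alpha_L(P_LC)=1$. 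This cannot be upgraded to a pointwise statement: for the octant in $\R^3$ with $L^\perp$ spanned by one of its edges, $C\cap L^\perp\neq\{0\}$ while $P_LC$ is a quadrant, so the integrand is strictly positive on a (null) set of subspaces. With these two facts inserted your argument closes and yields a nice self-contained alternative to the paper's citation; without them it has a real gap, and the justification you do offer for skipping them is incorrect.
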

\begin{proof}
	We let $\cS_k$ be the space of $k$-dimensional great subspheres of $\mathbb{S}^d$, supplied with the unique rotation invariant Haar probability measure $\tau_k$. For a spherically convex set $K\subset\mathbb{S}^d$ and $S\in\cS_k$ we denote by $K|S$ the spherical projection of $K$ onto $S$, see \cite[p.\ 263]{SW08}. The spherical mean projection volume of $K$ is given by
	$$
	W_k(K) := {1\over\omega_{k+1}}\int_{\cS_k}\sigma_k(K|S)\,\tau_k(\dd S),
	$$
where $\sigma_k$ is the $k$-dimensional Lebesgue measure on $S\in \cS_k$.
	Putting $C:=\pos K$ and using the fact that $\tau_k$ is the probability distribution of $L\cap \bS^d$, where $L\in G(d+1,k+1)$ is distributed according to the Haar measure $\nu_{k+1}$, we obtain
	\begin{align*}
	W_k(K) = {1\over\omega_{k+1}}\int_{\cS_k}\sigma_k(K|S)\,\tau_k(\dd S) = \frac 1 {\kappa_{k+1}}\int_{G(d+1,k+1)}\Vol_{k+1}(P_L(C)\cap\mathbb{B}^{d+1})\,\nu_{k+1}(\dd L).
	\end{align*}
This leads to the  equality $W_k(K)=w_{k+1}(C)$. On the other hand, from \cite[p.\ 263]{SW08} we have the relationship
	$$
	W_k(K) = \sum_{i=k}^{d}v_i(K)
	$$
with the spherical intrinsic volumes $v_i(K):=v_{i+1}(C)$.  This yields the required formula for $w_{k+1}(C)$.
\end{proof}

\begin{proof}[Proof of Theorem \ref{theo:buchta}]
We shall derive formulae for the expectations of Grassmann angles and the $f$-vectors of $C_n$ and then obtain Theorem~\ref{theo:buchta} by comparing these formulae.

\vspace*{2mm}
\noindent
\textsc{Step 1.}
We are interested in the expected Grassmann angle
$$
\E h_{k+1}(C_n)={1\over 2}\P(C_n\cap L\neq\{0\}),
$$
where $L\in G(d+1,d+1-k)$ is a random subspace with distribution $\nu_{d+1-k}$, and $k\in \{1,\ldots,d\}$.

Recall that $C_n = \pos \{U_1,\ldots,U_n\}$, where $U_1,\ldots, U_n$ are i.i.d.\ random points distributed uniformly on $\bS^d_+$. Observe that $L$ can be generated as a linear hull of $d+1-k$ i.i.d.\ random points $V_1,\ldots,V_{d+1-k}$ that are distributed uniformly on $\bS^d_+$ and independent of the $U_i$'s.

Applying the mapping $\cP$ defined by \eqref{eq:DefMappingP} together with Proposition \ref{prop:cauchy} we see that
$$
\E h_{k+1}(C_n) = \frac 12 \P\big(\conv\{\cP(U_1),\ldots, \cP(U_n)\} \cap \aff\{Z_1,\ldots,Z_{d+1-k}\}\neq\varnothing\big),
$$
where $Z_1 := \cP(V_1),\ldots, Z_{d+1-k} := \cP(V_{d+1-k})$ are independent random points in $\R^d$  distributed according to the Cauchy-type distribution described in Proposition \ref{prop:cauchy}.
Thus,
\begin{multline*}
\frac 12 - \E h_{k+1}(C_n) =
\frac 12
\int_{(\R^d)^{d+1-k}}\P(\aff \{x_1,\ldots,x_{d+1-k}\}\cap \conv\{\cP(U_{1}),\ldots, \cP(U_n)\} =\varnothing)\\
\times \prod_{i=1}^{d+1-k} {(2/\omega_{d+1})\,\dd x_i\over(1+\|x_i\|^2)^{d+1\over 2}}.
\end{multline*}

\vspace*{2mm}
\noindent
\textsc{Step 2.}
Let us now derive a formula for $\E f_k(C_n)$ or, equivalently, the expected number of $(k-1)$-dimensional faces of the random polytope $K_n:=\conv\{\mathcal {P}(U_1),\ldots,\mathcal{P}(U_n)\}$.
We have
$$
\E f_k(C_n) = \E f_{k-1}(K_n) = \E\sum_{1\leq i_1<\ldots<i_k\leq n}\ind_{\{\conv\{\cP(U_{i_1}),\ldots,\cP(U_{i_k})\}\in\cF_{k-1}(K_n)\}}.
$$
Since $\cP(U_1),\ldots,\cP(U_n)$ are independent and identically distributed according to the Cauchy-type distribution described in Proposition \ref{prop:cauchy}, we have that
\begin{align*}
\E f_k(C_n) = {n\choose k}\int_{(\R^d)^k}\P(\conv\{x_1,\ldots,x_k\}\in\cF_{k-1}(K_n)\,&|\,\cP(U_{1})=x_1,\ldots,\cP(U_{k})=x_k)\\
&\times\prod_{i=1}^k {(2/\omega_{d+1})\, \dd x_i\over(1+\|x_i\|^2)^{d+1\over 2}}.
\end{align*}
 Next, observe that conditionally on $\cP(U_{1})=x_1,\ldots,\cP(U_{k})=x_k$, we have $\conv\{x_1,\ldots,x_k\}\in\cF_{k-1}(K_n)$ if and only if $\aff \{x_1,\ldots,x_k\}\cap \conv\{\cP(U_{k+1}),\ldots, \cP(U_n)\} = \varnothing$. Therefore,
\begin{multline}\label{eq:E_f_k_explicit}
\E f_{k}(C_n)
=
{n\choose k}\int_{(\R^d)^k}\P(\aff \{x_1,\ldots,x_k\}\cap \conv\{\cP(U_{k+1}),\ldots, \cP(U_n)\} =\varnothing)\\
\times \prod_{i=1}^k {(2/\omega_{d+1})\, \dd x_i\over(1+\|x_i\|^2)^{d+1\over 2}}.
\end{multline}

\vspace*{2mm}
\noindent
\textsc{Step 3.}
Comparing the formulae obtained in Steps 1 and 2, we arrive at
$$
2 \binom {n+d+1-k}{d+1-k} \Big(\frac 12 - \E h_{k+1}(C_n)\Big) = \E f_{d+1-k} (C_{n+d+1-k}),
$$
which completes the proof.
\end{proof}

\begin{proof}[Proof of Theorem \ref{thm:ConicalIntrinsicVolumes}]
We first prove the asymptotic formula for $h_{k+1}$. For $k=0$ the result is trivial since $h_1(C_n)=1/2$, so let $k\in \{1,\ldots,d\}$.  We use Theorem~\ref{theo:buchta} together with Theorem~\ref{theo:ConvergenceOfExpectations} to obtain
\begin{equation}\label{eq:h_k_lim_proof}
n^{d+1-k}\Big({1\over 2}-\E h_{k+1}(C_n)\Big) = \frac 12 n^{d+1-k} \binom {n+d+1-k} {d+1-k}^{-1} \E f_{d-1-k} (C_{n+d+1-k}) \to B_{d+1-k,d},
\end{equation}
as $n\to\infty$. To deduce the result for the conic intrinsic volumes, recall~\eqref{eq:ConicalIntVolGrassmannAngle} and note that it implies, for $\ell\in\{0,1,\ldots,d-1\}$,
$$
\E v_\ell(C_n) = \E h_\ell(C_n) - \E h_{\ell+2}(C_n) = \Big({1\over 2}-\E h_{\ell+2}(C_n)\Big) - \Big({1\over 2}-\E h_\ell(C_n)\Big).
$$
So,
\begin{align*}
\lim_{n\to\infty} n^{d-\ell}\,\E v_\ell(C_n) = \lim_{n\to\infty}n^{d-\ell}\Big({1\over 2}-\E h_{\ell+2}(C_n)\Big) -\lim_{n\to\infty} n^{d-\ell}\Big({1\over 2}-\E h_\ell(C_n)\Big).
\end{align*}
According to~\eqref{eq:h_k_lim_proof}, the first limit equals $B_{d-\ell,d}$, while the second one is $0$ (indeed, the sequence goes to $0$ like a constant multiple of $n^{-2}$, as $n\to\infty$).

Finally, the asymptotic formulae for the mean projection volumes can be deduced in a similar way from~\eqref{eq:ConicalMeanProjectionIntVol}. Namely, for all $r\in \{0,1,\ldots,d-1\}$ we have $w_{r+1}(C_n) = h_{r+1}(C_n) + h_{r+2}(C_n)$, hence
$$
\lim_{n\to\infty}n^{d-r} \big(1 - \E w_{r+1}(C_n)\big) = \lim_{n\to\infty} n^{d-r} \Big(\frac 12 - \E h_{r+1}(C_n)\Big) +  \lim_{n\to\infty} n^{d-r} \Big(\frac 12 - \E h_{r+2}(C_n)\Big).
$$
By~\eqref{eq:h_k_lim_proof}, the second limit equals $B_{d-r,d}$, whereas the first one is $0$.
\end{proof}

\begin{proof}[Proof of Theorem \ref{thm:WeakConvergenceGrassmannAngles}]
The proof of this result is similar to the one of Theorem \ref{theo:conv_of_angle} and for this reason we restrict ourselves to a sketch. Let $U_1,\ldots,U_n$ be independent and uniformly distributed random points on $\SSd_+$ which generate the random cone $C_n$, i.e., $C_n=\pos(U_1,\ldots,U_n)$. Also, let $V_1,\ldots,V_{d+1-k}$ be independent and uniformly distributed random points on $\SSd_+$, which are independent from the $U_i$'s. Since the $(d+1-k)$-dimensional linear subspace generated by $V_1,\ldots,V_{d+1-k}$ is uniformly distributed in $G(d+1,d+1-k)$, the definition of the Grassmann angle implies that
$$
h_{k+1}(C_n) = {1\over 2}\P\big(C_n\cap{\rm lin}(V_1,\ldots,V_{d+1-k})\neq\{0\}\,|\,C_n\big).
$$
Applying now the map $\cP$ from the upper half-sphere to $\R^d$, this can be rewritten as
\begin{align*}
h_{k+1}(C_n) = {1\over 2}\P\big(\conv(\cP(U_1),\ldots,\cP(U_n))\cap\aff(\cP(V_1),\ldots,\cP(V_{d+1-k})\neq\varnothing\,|\,U_1,\ldots,U_n\big).
\end{align*}
Hence, denoting by $L_n$ the convex hull of the points $\cP(U_1)/n,\ldots,\cP(U_n)/n$ we arrive at
\begin{align*}
n^{d+1-k}\Big({1\over 2}-h_{k+1}(C_n)\Big) &= {1\over 2}n^{d+1-k}\,\P\bigg(L_n\cap\aff\Big({\cP(V_1)\over n},\ldots,{\cP(V_{d+1-k})\over n}\Big)=\varnothing\,\Big|\,L_n\bigg)\\
&={1\over 2}\int_{(\R^d)^{d+1-k}}\ind_{\{L_n\cap\aff(x_1,\ldots,x_{d+1-k})=\varnothing\}}\,\prod_{i=1}^{d+1-k}p_n(x_i)\,\dint x_i\,,
\end{align*}
where $p_n(x)$ is the function defined by \eqref{eq:Densitypn}. From this point on we can argue as in the proof of Theorem \ref{theo:conv_of_angle}. Namely, $p_n(x)$ converges to the function $(2/\omega_{d+1})\|x\|^{-(d+1)}$ for all $x\in\R^d$, while the indicator function converges to $\ind_{\{\conv \Pi_{d,1}(2)\cap\aff\{x_1,\ldots,x_{d+1-k}\} = \varnothing\}}$, as $n\to\infty$, since the random polytopes $L_n$ converge weakly to $\conv \Pi_{d,1}(2)$ on the space $\cK^d$. So, a dominated convergence argument completes the proof.
\end{proof}

\section{Proofs: Functionals of the Poisson process}\label{sec:ProofsPPP}

\subsection{Invariance property}
In our proof we shall use the following projection stability. It says that the projection of a Poisson point processes with a power-law intensity measure as in \eqref{eq:density} onto a linear subspace is again a Poisson point process of the same type within this subspace.

\begin{lemma}\label{lem:projection}
Let $\gamma>0$, $c>0$ and $k\in\{1,\ldots,d-1\}$. The orthogonal projection of $\Pi_{d,\gamma}(c)$ onto any $k$-dimensional linear subspace $L$ of $\R^d$ has the same law as $\Pi_{k, \gamma}(c)$, where we identify $L$ with $\R^k$.
\end{lemma}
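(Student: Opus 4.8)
The plan is to invoke the mapping theorem for Poisson point processes: the image of a Poisson process under a measurable map is again a Poisson process whose intensity measure is the pushforward of the original one. I would therefore identify the projected process $P_L\,\Pi_{d,\gamma}(c)$ as a Poisson process on $L\cong\R^k$ and compute its intensity density, then match it against the density \eqref{eq:density} of $\Pi_{k,\gamma}(c)$. Since the intensity measure $\mu$ of $\Pi_{d,\gamma}(c)$ depends on $x$ only through $\|x\|$, it is rotation invariant, so its pushforward under $P_L$ depends on $L$ only through $\dim L = k$. Hence I may assume without loss of generality that $L=\R^k\times\{0\}$ and write $x=(y,z)$ with $y\in\R^k$, $z\in\R^{d-k}$ and $\|x\|^2=\|y\|^2+\|z\|^2$, so that $P_L(y,z)=y$.

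Before computing I would dispose of two measure-theoretic points. First, the set of points projecting to $0\in\R^k$ is $L^\perp$, a $(d-k)$-dimensional subspace and hence Lebesgue-null in $\R^d$; as $\mu$ is absolutely continuous we get $\mu(L^\perp)=0$, so almost surely no atom of $\Pi_{d,\gamma}(c)$ is mapped to the origin and the projected process lives on $\R^k\bsl\{0\}$. Second, for the pushforward to define a genuine Poisson process on $\R^k\bsl\{0\}$ it must be absolutely continuous and locally finite there; both will follow once the density is computed (the relevant integral converges precisely because $\gamma>0$).

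The core step is to integrate out the orthogonal complement. For a nonnegative test function $g$ on $\R^k$,
$$
\int_{\R^k} g\,\dd (P_L)_*\mu = \frac{c}{\omega_{d+\gamma}}\int_{\R^k}\int_{\R^{d-k}} \frac{g(y)}{(\|y\|^2+\|z\|^2)^{(d+\gamma)/2}}\,\dd z\,\dd y,
$$
so the pushforward density at $y$ equals $\tfrac{c}{\omega_{d+\gamma}}\int_{\R^{d-k}}(\|y\|^2+\|z\|^2)^{-(d+\gamma)/2}\,\dd z$. Passing to polar coordinates in $\R^{d-k}$ (picking up the factor $\omega_{d-k}$) and substituting $s=\|y\|\,t$ pulls out $\|y\|^{(d-k)-(d+\gamma)}=\|y\|^{-(k+\gamma)}$, exactly the desired power, and reduces the remaining $t$-integral to a Beta integral equal to $\tfrac12 B\!\big(\tfrac{d-k}{2},\tfrac{k+\gamma}{2}\big)=\tfrac12\,\Gamma\!\big(\tfrac{d-k}{2}\big)\Gamma\!\big(\tfrac{k+\gamma}{2}\big)\big/\Gamma\!\big(\tfrac{d+\gamma}{2}\big)$. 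Collecting constants gives the density
$$
\frac{\omega_{d-k}}{2\,\omega_{d+\gamma}}\,\frac{\Gamma\!\big(\tfrac{d-k}{2}\big)\Gamma\!\big(\tfrac{k+\gamma}{2}\big)}{\Gamma\!\big(\tfrac{d+\gamma}{2}\big)}\,\frac{c}{\|y\|^{k+\gamma}}.
$$

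Finally I would verify that this constant simplifies to $c/\omega_{k+\gamma}$. Substituting $\omega_m=2\pi^{m/2}/\Gamma(m/2)$ throughout, the ratio $\omega_{d-k}/\omega_{d+\gamma}$ contributes $\pi^{-(k+\gamma)/2}\,\Gamma\!\big(\tfrac{d+\gamma}{2}\big)\big/\Gamma\!\big(\tfrac{d-k}{2}\big)$; multiplying by the Beta factor cancels the $\Gamma\!\big(\tfrac{d-k}{2}\big)$ and $\Gamma\!\big(\tfrac{d+\gamma}{2}\big)$ terms and leaves $\Gamma\!\big(\tfrac{k+\gamma}{2}\big)\big/\big(2\pi^{(k+\gamma)/2}\big)=1/\omega_{k+\gamma}$, exactly the constant in \eqref{eq:density} in dimension $k$. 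Thus $P_L\,\Pi_{d,\gamma}(c)$ has the intensity of $\Pi_{k,\gamma}(c)$, proving the claim. There is no genuine obstacle here: the only care needed is in checking convergence of the inner integral (which is what forces and is guaranteed by $\gamma>0$, and which simultaneously yields local finiteness of the pushforward away from the origin) and in confirming that the gamma-function constants collapse correctly.
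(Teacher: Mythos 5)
Your proposal is correct and follows essentially the same route as the paper: both compute the pushforward intensity of the projected Poisson process and match it to the power-law density via a Beta integral, the only difference being that you integrate out all $d-k$ complementary coordinates at once (polar coordinates in $L^\perp$), whereas the paper treats the hyperplane case $k=d-1$ with a one-dimensional integral and then inducts on the codimension. Your explicit verification of the mapping-theorem hypotheses (the $\mu$-null set $L^\perp$ projecting to the origin, non-atomicity, and local finiteness away from $0$) is a point the paper leaves implicit, and your Gamma-function bookkeeping checks out.
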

\begin{proof}
First suppose that $k=d-1$. By rotational symmetry we may assume that we project onto the hyperplane $\{x_1=0\}$. The intensity of the projected Poisson point process at $(0,x_2,\ldots,x_d)$ with $x_2^2+\ldots+x_d^2=a^2$ equals
$$
\frac c {\omega_{d+\gamma}} \int_{-\infty}^{+\infty} \frac{\dd x_1}{(a^2+x_1^2)^{d+\gamma\over 2}}
=
\frac c {\omega_{d+\gamma}} \int_{-\infty}^{+\infty} \frac{a \dd y}{a^{d+\gamma}  (1+y^2)^{d+\gamma\over 2}}
=
 \frac {c\,a^{1-d-\gamma}} {\omega_{d+\gamma}}\int_{-\infty}^\infty \frac{\dd y}{(1+y^2)^{d+\gamma\over 2}},
$$
where we used the change of variables $y = x_1/a$. Applying the substitution $y^2=t$ the last integral equals
$$
\int_{-\infty}^\infty \frac{\dd y}{(1+y^2)^{d+\gamma\over 2}} = \int_0^\infty {t^{-{1\over 2}}\over (1+t)^{d+\gamma\over 2}}\,\dd t = \sqrt{\pi}\,{\Gamma(\frac{d+\gamma-1}{2})\over \Gamma(\frac{d+\gamma}{2})}
$$
by definition of Euler's beta function and its relationship to the gamma function. Hence, the intensity of the projected Poisson point process is
$$
\frac {c\,a^{1-d-\gamma}} {\omega_{d+\gamma}} \sqrt \pi\,\frac{ \Gamma(\frac{d+\gamma-1}{2})}{\Gamma(\frac{d+\gamma}{2})}
=
\frac {c} {\omega_{d+\gamma-1}} \frac 1 {a^{d+\gamma-1}}
$$
by definition of $\omega_{d+\gamma}$ and $\omega_{d+\gamma-1}$. Arguing now inductively, we arrive at the desired claim.
\end{proof}

\subsection{Expected \texorpdfstring{$T$}{T}-functional: Proof of Theorem~\ref{theo:T_expect}}\label{subsec:proof_exp_T_funct}
We are now ready to prove Theorems~\ref{theo:T_expect}, \ref{theo:T_expect_symm} and Proposition~\ref{cor:IntrinsicVolumes}.
\begin{proof}[Proof of Theorem~\ref{theo:T_expect}]
To simplify the notation, we shall write $\Pi_{d,\gamma}$ for $\Pi_{d,\gamma}(c)$ in this proof and keep $c>0$ fixed. Recall that $\conv \Pi_{d,\gamma}$ denotes the convex hull of all points of the Poisson process $\Pi_{d,\gamma}$. By Corollary~\ref{cor:polytope}, $\conv \Pi_{d,\gamma}$ is almost surely a convex polytope. Also recall that
$$
T_{a,b}^{d,k}(\conv\Pi_{d,\gamma}) = \sum_{F \in \mathcal{F}_{k}(\conv \Pi_{d,\gamma})} \dist^a(F) \Vol_k^b(F).
$$
Let us denote by $\Delta_{k-1}(x_1,\ldots,x_{k})$ the $(k-1)$-dimensional volume of the simplex with vertices $x_1,\ldots,x_{k}\in\R^d$. We denote by $E= E(x_1,\ldots,x_k)\in A(d,k-1)$ the $(k-1)$-dimensional affine subspace spanned by the points $x_1,\ldots,x_{k}$. Let also $\dist(E)$ be the distance from $E$ to the origin. By the multivariate Mecke formula for Poisson point processes \eqref{eq:MeckeEquation}, we have
\begin{multline*}
\E T_{a,b}^{d,k-1}(\conv\Pi_{d,\gamma})={1\over k!}
 \int_{(\R^d)^k}\Delta_{k-1}^b (x_1,\ldots,x_k) \,\dist^a(E)\\
\times\P\left( \conv\{x_1,\ldots,x_k\}\in \mathcal F_{k-1} (\conv\tilde \Pi_{d,\gamma})\right) \prod_{i=1}^k \frac{c\,\dd x_i}{\omega_{d+\gamma}\|x_i\|^{d+\gamma}},
\end{multline*}
where $\tilde \Pi_{d,\gamma} := \Pi_{d,\gamma} + \sum_{i=1}^k \delta_{x_i}$.  Let the linear subspace $E^\bot$ be the unique orthogonal complement of $E$ and $P_{E^\bot}$ the orthogonal projection onto $E^\bot$.  Note that $P_{E^\bot}x_1 = \ldots= P_{E^\bot}x_k$.  Clearly, the simplex $\conv\{x_1,\ldots,x_k\}$ is a $(k-1)$-dimensional face of $\conv\tilde \Pi_{d,\gamma}$ if and only if $P_{E^\bot}x_1$ is not contained in $P_{E^\bot}\conv\Pi_{d,\gamma}$. Define the non-absorption probability
\begin{equation}\label{eq:AbsobptionProbability}
p_{d,\gamma}(R) := \P(R e_1\notin \conv\Pi_{d,\gamma}), \quad R>0,
\end{equation}
where $e_1$ is any vector of unit length in $\R^d$.
By Lemma~\ref{lem:projection}, $P_{E^\bot}\Pi_{d,\gamma}$ has  the same distribution as $\Pi_{d+1-k, \gamma}$, where we identify $E^\bot$ with $\R^{d+1-k}$. Hence,
\begin{equation}\label{eq:E_T_aux}
\begin{split}
\E T_{a,b}^{d,k-1}(\conv\Pi_{d,\gamma})
={1\over k!}
\int_{(\R^d)^k}
\Delta_{k-1}^b (x_1,\ldots,x_k) \, &\dist^a(E) \\
&\times p_{d+1-k,\gamma}(\dist(E)) \prod_{i=1}^k \frac{c\,\dd x_i}{\omega_{d+\gamma}\|x_i\|^{d+\gamma}}.
\end{split}
\end{equation}
Next, we use the affine Blaschke--Petkantschin formula \eqref{eq:BlaschkePetkantschin}:
\begin{multline*}
\E T_{a,b}^{d,k-1}(\conv\Pi_{d,\gamma})
=
\frac{c^k  ((k-1)!)^{d+1-k} b_{d,k-1}}{k!\,\omega_{d+\gamma}^k} \int_{A(d,k-1)} \int_{E^k}\Delta_{k-1}^{b+d+1-k}(x_1,\ldots,x_k) \\
\times \dist^a(E) \, p_{d+1-k,\gamma}(\dist(E))
\left(\prod_{i=1}^k \frac{1}{\|x_i\|^{d+\gamma}}\right)
\dd \lambda_E^k(x_1,\ldots,x_k) \mu_{k-1}(\dd E).
\end{multline*}
Since any two affine subspaces of $\R^d$ of the same dimension and with the same distance to the origin can be transformed to each other by an orthogonal transformation, we can introduce the function
\begin{equation}\label{eq:DefFunctionh}
h(\dist(E)) := \int_{E^k} \Delta_{k-1}^{b+d+1-k}(x_1,\ldots,x_k) \left(\prod_{i=1}^k \frac{1}{\|x_i\|^{d+\gamma}}\right)  \dd \lambda_E^k(x_1,\ldots,x_k),
\end{equation}
which indeed depends on $E$ only through $\dist(E)$. With this notation we arrive at
\begin{align*}
\E T_{a,b}^{d,k-1}(\conv\Pi_{d,\gamma})
&=
\frac{c^k  (k-1)!^{d+1-k} b_{d,k-1}}{k!\,\omega_{d+\gamma}^k} \\
&\qquad\times \int_{A(d,k-1)}  \dist^a(E)\, p_{d+1-k,\gamma}(\dist(E)) h(\dist(E))\, \mu_{k-1}(\dd E).
\end{align*}
Let $\beta := b + d - k +1$. We compute
\begin{align*}
h(r)
&= \int_{(\R^{k-1})^k}  \Delta^{\beta}_{k-1}(y_1,\ldots,y_k) \prod_{i=1}^k  \frac{\dd y_i}{(r^2 + \|y_i\|^2)^{d+\gamma\over 2}}\\
&= \int_{(\R^{k-1})^k}  r^{(k-1)\beta}\Delta^{\beta}_{k-1}(z_1,\ldots,z_k) \prod_{i=1}^k  \frac{r^{k-1}\dd z_i}{r^{d+\gamma}(1 + \|z_i\|^2)^{d+\gamma\over 2}}\\
&=  r^{(k-1)k - (d+\gamma)k + \beta(k-1)} \int_{(\R^{k-1})^k} \Delta^{\beta}_{k-1}(z_1,\ldots,z_k)  \prod_{i=1}^k \frac{\dd z_i}{(1 + \|z_i\|^2)^{d+\gamma\over 2}},
\end{align*}
where we have used the change of variables $y_i = rz_i$.  Thus, the function $h$ satisfies the scaling property
$$
h(r) = r^{(k-1)k - (d+\gamma)k + \beta(k-1)} h(1).
$$
To compute the value of $h(1)$, let $Z_1,\ldots,Z_k$ be independent random variables on $\R^{k-1}$ with the so-called beta$^{\prime}$-density $f(x)$ as in~\cite{beta_polytopes}, that is,
$$
f(x) = \frac {\omega_{d+1-k+\gamma}}{\omega_{d+\gamma}}(1+ \|x\|^2)^{-\frac{d+\gamma}{2}}, \quad x\in\R^{k-1}.
$$
Recall that $\Delta_{k-1}(Z_1,\ldots,Z_k)$ is the volume of the simplex with vertices $Z_1,\ldots,Z_k$.
Then, we can interpret $h(1)$ as follows
$$
h(1) =\frac {\omega_{d+\gamma}^k}{\omega_{d+1-k+\gamma}^k} \E \Delta_{k-1}^{\beta} (Z_1,\ldots,Z_k).
$$
The moments of $\Delta_{k-1}(Z_1,\ldots,Z_k)$ have been calculated by Miles~\cite[Eqn.~(74)]{miles} and we have the explicit formula
$$
\E \Delta_{k-1}^{\beta} (Z_1,\ldots,Z_k) =
{1\over ((k-1)!)^{\beta}} \frac{\Gamma\left( \left(\frac{d+1-k+\gamma}{2} \right)k-\frac{k-1}{2}\beta \right)}{\Gamma\left( \left( \frac{d - k+1-\beta+\gamma}{2} \right)k \right)} \left( \frac{\Gamma\left(\frac{d+1-k-\beta+\gamma}{2} \right)}{\Gamma\left(\frac{d+1-k+\gamma}{2} \right)} \right)^{k} \prod_{i=1}^{k-1} \frac{\Gamma\left( \frac{i+\beta}{2} \right)}{\Gamma\left( \frac{i}{2} \right)}
$$
provided that $d - k+1-\beta+\gamma>0$.
In fact, Miles stated his result for integer moments only, but it also holds for arbitrary moments as was argued in~\cite{beta_polytopes}.

Let us consider the case $k=d$. Then $\beta=b+1$ and the above formulae simplify to
\begin{equation}\label{eq:hScaling}
h(r) = r^{(b-\gamma)d - b - 1} h(1)
\end{equation}
and
\begin{equation}\label{eq:h1}
h(1)=
\Big(\frac {\omega_{d+\gamma}}{\omega_{1+\gamma}}\Big)^d
{1\over ((d-1)!)^{b+1}}
\frac{\Gamma\left(\frac{\gamma-b}{2}d + \frac{b+1}{2} \right)}{\Gamma\left(\frac{\gamma-b}{2}d \right)} \left( \frac{\Gamma\left(\frac{\gamma-b}{2} \right)}{\Gamma\left(\frac{\gamma+1}{2} \right)} \right)^{d} \prod_{i=1}^{d-1} \frac{\Gamma\left( \frac{i+b+1}{2} \right)}{\Gamma\left( \frac{i}{2} \right)}
\end{equation}
provided that $\gamma- b >0$. We also have $h(r) = +\infty$, $r>0$, if $\gamma \leq b$.  Since
$$
b_{d,d-1} = {\omega_d\over 2} ={\pi^{d/2} \over  \Gamma({d\over 2})},
$$
the above formulae yield
$$
\E T_{a,b}^{d,d-1}(\conv\Pi_{d,\gamma})
=
\frac{c^{d}  (d-1)! \omega_d }{2d!\,\omega_{d+\gamma}^d} \int_{A(d,d-1)}  \dist^a(E) \, p_{1,\gamma}(\dist(E)) h(\dist(E))\, \mu_{d-1}(\dd E).
$$
Now, recalling the definition of $p_{1,\gamma}(R)$ from \eqref{eq:AbsobptionProbability} we obtain
\begin{equation}\label{eq:Absorp1Dim}
p_{1,\gamma}(R) = \P(R \notin \conv \Pi_{1,\gamma}) = \P(\Pi_{1,\gamma}[R,\infty) = 0)
=
\eee^{- \frac c {\omega_{\gamma+1}}\int_R^\infty \frac {\dd x} {x^{\gamma+1}}  }
=
\eee^{- \frac c {\gamma \omega_{\gamma+1}} R^{-\gamma}}.
\end{equation}
Hence,
\begin{align*}
\E T_{a,b}^{d,d-1}(\conv\Pi_{d,\gamma})
&=
\frac{c^{d}  (d-1)! \omega_d }{2d!\,\omega_{d+\gamma}^d} h(1) \\
&\qquad\times\int_{A(d,d-1)}  \dist^{a-b-1 + (b-\gamma)d}(E) \, \eee^{- \frac c {\gamma \omega_{\gamma+1}} \dist^{-\gamma}(E)} \,\mu_{d-1}(\dd E).
\end{align*}
By the definition of the measure $\mu_{d-1}$, we obtain
\begin{equation}\label{eq:TabUnsymmetricProof}
\E T_{a,b}^{d,d-1}(\conv\Pi_{d,\gamma})
=
\frac{c^{d}  (d-1)! \omega_d }{d!\,\omega_{d+\gamma}^d} h(1) \int_{0}^\infty  x^{a-b-1 + (b-\gamma)d} \eee^{- \frac c {\gamma \omega_{\gamma+1}} x^{-\gamma}}\dd x.
\end{equation}
Evaluating the integral, we get
$$
\E T_{a,b}^{d,d-1}(\conv\Pi_{d,\gamma})
=
\frac{c^{d}  (d-1)! \omega_d }{d!\,\omega_{d+\gamma}^d} h(1)
\gamma^{-1} \left(\frac{c}{\gamma \omega_{\gamma+1}}\right)^{\frac{a-b+ (b-\gamma)d}{\gamma}} \Gamma \left(\frac{(\gamma-b)d + b - a}{\gamma}\right)
$$
under the condition $(\gamma-b)d + b - a>0$. Otherwise, the integral equals $+\infty$. Applying formula \eqref{eq:h1} completes the proof.
\end{proof}

\begin{proof}[Proof of Proposition~\ref{cor:IntrinsicVolumes}]
Lemma~\ref{lem:projection} implies that for any $L\in G(d,k)$, the projected random polytope $P_L \conv \Pi_{d,\gamma}$ has the same distribution as $\conv \Pi_{k,\gamma}$ if we identify $L$ with $\R^k$. Using this together with the definition of intrinsic volumes and Fubini's theorem we get
\begin{align*}
\E V_k(\conv\Pi_{d,\gamma}) &= {d\choose k}{\kappa_d\over \kappa_k\kappa_{d-k}}\E\int_{G(d,k)}\Vol_k(P_L \conv\Pi_{d,\gamma})\,\nu_k(\dd L)\\
&= {d\choose k}{\kappa_d\over \kappa_k\kappa_{d-k}}\int_{G(d,k)}\E\Vol_k(P_L \conv\Pi_{d,\gamma})\,\nu_k(\dd L)\\
&= {d\choose k}{\kappa_d\over \kappa_k\kappa_{d-k}}\E\Vol_k(\conv\Pi_{k,\gamma}),
\end{align*}
since $\nu_k$ is a probability measure. Now, Corollary~\ref{cor:volume_expect} can be used to complete the proof.
\end{proof}

\begin{proof}[Proof of Theorem~\ref{theo:T_expect_symm}]
We keep the notation $\Pi_{d,\gamma}$ for $\Pi_{d,\gamma}(c)$. Recall that $\sconv \Pi_{d,\gamma}$ denotes the convex hull of all points of the form $\pm x$, where $x$ is a point of $\Pi_{d,\gamma}$. By Corollary~\ref{cor:polytope}, $\sconv \Pi_{d,\gamma}$ is a convex polytope a.s. Its $(k-1)$-dimensional faces have the form $\conv\{\eps_1x_1,\ldots,\eps_k x_k\}$, where $x_1,\ldots,x_k$ are distinct points from $\Pi_{d,\gamma}$ and $\eps_1,\ldots,\eps_k\in  \{+1,-1\}$.  Recalling that
$$
T_{a,b}^{d,k-1}(\sconv\Pi_{d,\gamma}) = \sum_{F \in \mathcal{F}_{k-1}(\sconv \Pi_{d,\gamma})} \dist^{a}(F) \Vol_{k-1}^b(F)
$$
we can write
\begin{multline*}
\E T_{a,b}^{d,k-1}(\sconv\Pi_{d,\gamma})
=
\frac 1{k!} \; \E \sum_{(\eps_1,\ldots,\eps_k)\in \{+1,-1\}^k} \sum_{(x_1,\ldots,x_k)\in \Pi_{d,\gamma,\neq}^k} \dist^a(\aff\{\eps_1x_1,\ldots,\eps_kx_k\})\\ \times\Delta_{k-1}^b(\eps_1x_1,\ldots,\eps_k x_k) \ind_{\{\conv\{\eps_1x_1,\ldots,\eps_k x_k\} \in \mathcal F_{k-1}(\sconv\Pi_{d,\gamma})\}}.
\end{multline*}
Interchanging the expectation and the sum over $(\eps_1,\ldots,\eps_k)$ and using the Mecke formula~\eqref{eq:MeckeEquation}, we obtain
\begin{multline*}
\E T_{a,b}^{d,k-1}(\sconv\Pi_{d,\gamma})  = \frac 1{k!} \sum_{(\eps_1,\ldots,\eps_k)\in \{+1,-1\}^k} \E
\int_{(\R^d)^{k}} \dist^a(\aff\{\eps_1x_1,\ldots,\eps_kx_k\})\\
\times \Delta_{k-1}^b(\eps_1x_1,\ldots,\eps_k x_k) \ind_{\{\conv\{\eps_1x_1,\ldots,\eps_k x_k\} \in \mathcal F_{k-1}(\sconv\tilde \Pi_{d,\gamma})\}}\prod_{i=1}^k \frac{c\, \dd x_i}{\omega_{d+\gamma} \|x_i\|^{d+\gamma}},
\end{multline*}
where $\tilde \Pi_{d,\gamma} = \Pi_{d,\gamma} + \delta_{x_1}+ \ldots + \delta_{x_k}$.
Interchanging the integral and the expectation and noting that the expectation of an indicator function is the probability of the corresponding event, we get
\begin{multline*}
\E T_{a,b}^{d,k-1}(\sconv\Pi_{d,\gamma})  = \frac 1{k!} \sum_{(\eps_1,\ldots,\eps_k)\in \{+1,-1\}^k}
\int_{(\R^d)^{k}} \dist^a(\aff\{\eps_1x_1,\ldots,\eps_kx_k\})\\
\times \Delta_{k-1}^b(\eps_1x_1,\ldots,\eps_k x_k)\; \P \left(\conv\{\eps_1x_1,\ldots,\eps_k x_k\} \in \mathcal F_{k-1}(\sconv\tilde \Pi_{d,\gamma})\right)\prod_{i=1}^k \frac{c \,\dd x_i}{\omega_{d+\gamma} \|x_i\|^{d+\gamma}}.
\end{multline*}
Now observe that
$$
\sconv \tilde \Pi_{d,\gamma} = \sconv\{\Pi_{d,\gamma} + \delta_{x_1}+ \ldots + \delta_{x_k}\} = \sconv\{\Pi_{d,\gamma} + \delta_{\eps_1 x_1}+ \ldots + \delta_{\eps_k x_k}\}.
$$
Noting that the integral remains invariant under the change of variables $\eps_1x_1\mapsto x_1,\ldots, \eps_k x_k\mapsto x_k$, we arrive at
\begin{multline*}
\E T_{a,b}^{d,k-1}(\sconv\Pi_{d,\gamma})  = \frac {2^k}{k!}
\int_{(\R^d)^{k}} \dist^a(\aff\{x_1,\ldots,x_k\})\\
\times \Delta_{k-1}^b(x_1,\ldots,x_k)\; \P \left(\conv\{x_1,\ldots,x_k\} \in \mathcal F_{k-1}(\sconv\tilde \Pi_{d,\gamma})\right)\prod_{i=1}^k \frac{c \, \dd x_i}{\omega_{d+\gamma} \|y_i\|^{d+\gamma}}.
\end{multline*}
From now on we can argue exactly as in the proof of Theorem~\ref{theo:T_expect}, but an additional factor of $2^k$ appears throughout and the non-absorption probability $p_{d,\gamma}(R)$ has to be replaced by its symmetrized version
$$
q_{d,\gamma}(R) := \P(R e_1 \notin \sconv \Pi_{d,\gamma}), \quad R>0.
$$
In particular, in the special case $k=d$, we arrive at
$$
\E T_{a,b}^{d,d-1}(\sconv\Pi_{d,\gamma})
=
\frac{(2c)^{d}  (d-1)! \omega_d }{2d!\,\omega_{d+\gamma}^d} \int_{A(d,d-1)}  \dist^a(E) \, q_{1,\gamma}(\dist(E)) h(\dist(E))\, \mu_{d-1}(\dd E).
$$
The non-absorption probability can easily be calculated as follows:
\begin{align*}
q_{1,\gamma}(R) = \P(R \notin \sconv \Pi_{1,\gamma}) &= \P(\Pi_{1,\gamma}[R,\infty) = \Pi_{1,\gamma}(-\infty, -R] = 0)
\\
&= \P(\Pi_{1,\gamma}[R,\infty)  = 0)^2
=
\eee^{- \frac{2c} {\omega_{\gamma+1}}\int_R^\infty \frac {\dd x} {x^{\gamma+1}}  }
=
\eee^{- \frac {2c} {\gamma \omega_{\gamma+1}} R^{-\gamma}}.
\end{align*}
By the definition of the measure $\mu_{d-1}$, we obtain
\begin{equation}\label{eq:TabSymmeticProof}
\E T_{a,b}^{d,d-1}(\sconv\Pi_{d,\gamma})
=
\frac{(2c)^{d}  (d-1)! \omega_d }{d!\,\omega_{d+\gamma}^d} h(1) \int_{0}^\infty  x^{a-b-1 + (b-\gamma)d} \eee^{- \frac {2c} {\gamma \omega_{\gamma+1}} x^{-\gamma}}\dd x,
\end{equation}
where $h(1)$ is given by \eqref{eq:h1}. Now a comparison of \eqref{eq:TabSymmeticProof} with \eqref{eq:TabUnsymmetricProof} in the proof of Theorem \ref{theo:T_expect} completes the proof.
\end{proof}

\begin{proof}[Proof of \eqref{eq:ConstantBd}]
We compute the constant $B_{d,d}$. Using the Blaschke--Petkantschin formula \eqref{eq:BlaschkePetkantschin} with $k=d-1$ we see that
\begin{align*}
B_{d,d} = {1\over 2}\Big({2\over\omega_{d+1}}\Big)^d\,{\omega_d\over 2}(d-1)!\int_{A(d,d-1)}\int_{E^d}&\P( \conv \Pi_{d,1}(2)\cap E=\varnothing)\Delta_{d-1}(x_1,\ldots,x_d)\\
&\times\prod_{i=1}^{d}{\dd x_i\over\|x_i\|^{d+1}}\,\mu_{d-1}(\dd E).
\end{align*}
The probability has already been computed in \eqref{eq:Absorp1Dim}:
$$
\P( \conv \Pi_{d,1}(2)\cap E=\varnothing) = e^{-{1\over\pi r}}
$$
if $r>0$ denotes the distance of $E$ to the origin. Thus, using the definition \eqref{eq:h1} of $h(1)$ and the scaling relation \eqref{eq:hScaling} (with $b=0$ and $\gamma=1$), we conclude that
\begin{align*}
B_{d,d}  &= \Big({2\over\omega_{d+1}}\Big)^d{\omega_d\over 2}\,(d-1)!\,h(1)\int_{0}^\infty e^{-{1\over \pi r}}\,r^{-(d+1)}\,\dd r\\
&= \Big({2\over\omega_{d+1}}\Big)^d{\omega_d\over 2}\,(d-1)!\,h(1)\,\pi^d (d-1)!=(d-1)!{\pi^{d-{1\over 2}}\Gamma({d+1\over 2})\over\Gamma({d\over 2})}= (2\pi)^{d-1}\Gamma\Big({d+1\over 2}\Big)^2,
\end{align*}
where in the last step we have used Legendre's duplication formula. This completes the proof.
\end{proof}

\section{Auxiliary lemmas}\label{sec:AuxLemmas}

We collect here additional technical lemmas that have been used in the arguments in the previous sections.

\subsection{Upper bound on the absorption probability}
The following lemmas are needed to prepare the proof of Lemma~\ref{lem:NonabsorbtionCauchyPolytope_sphere} that estimates the probability that the convex cone $C_n$ does not contain a small circular cone around the vector $e_0 = (1,0,\ldots,0)$. Lemma~\ref{lem:NonabsorbtionCauchyPolytope_sphere} was used in the proof of Theorem~\ref{theo:ConvergenceOfExpectations}. 

\begin{lemma}\label{lem:OriginInConvexCombination}
Suppose that for each $(\varepsilon_1,\dots,\varepsilon_d) \in \{-1,+1\}^d$ a point in $\R^d$ is given whose coordinates have the same signs as $\varepsilon_1,\dots,\varepsilon_d$. Then, the convex hull of these $2^d$ points contains the origin.
\end{lemma}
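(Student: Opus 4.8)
The plan is to argue by contradiction using the separating hyperplane theorem. For a sign vector $\varepsilon = (\varepsilon_1,\dots,\varepsilon_d) \in \{-1,+1\}^d$ denote by $p^{(\varepsilon)} \in \R^d$ the associated point, so that the $j$-th coordinate $p^{(\varepsilon)}_j$ is strictly positive when $\varepsilon_j = +1$ and strictly negative when $\varepsilon_j = -1$; in particular every coordinate is nonzero. Suppose, for contradiction, that $0 \notin \conv\{p^{(\varepsilon)} : \varepsilon \in \{-1,+1\}^d\}$. This convex hull is the convex hull of finitely many points, hence compact and convex, so the separation theorem (for a point and a disjoint compact convex set) furnishes a vector $v \in \R^d \setminus \{0\}$ with $\langle v, p^{(\varepsilon)}\rangle > 0$ for every $\varepsilon$.

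The crux of the argument is then to exhibit a single sign vector for which this inequality fails. I would choose $\varepsilon_j := -\operatorname{sgn}(v_j)$ whenever $v_j \neq 0$, and set $\varepsilon_j := +1$ otherwise. Writing $p := p^{(\varepsilon)}$ for the corresponding point and expanding $\langle v, p\rangle = \sum_{j=1}^d v_j\, p_j$, I would check that each summand is non-positive: if $v_j = 0$ the term vanishes, while if $v_j \neq 0$ then $p_j$ has sign $\varepsilon_j = -\operatorname{sgn}(v_j)$, so $v_j\, p_j < 0$. Since $v \neq 0$, at least one index satisfies $v_j \neq 0$, which forces $\langle v, p\rangle < 0$, contradicting $\langle v, p\rangle > 0$. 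This contradiction establishes the claim.

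The argument is short and I do not expect a serious obstacle; the only point demanding care is strictness. I would emphasize that the hypothesis ``same signs as $\varepsilon_j$'' is read as strict positivity/negativity, so that all coordinates $p^{(\varepsilon)}_j$ are nonzero, and that $v \neq 0$ supplies the one index where $v_j\, p_j$ is \emph{strictly} negative; these two facts are exactly what upgrade the sum from ``$\leq 0$'' to ``$< 0$''. An alternative and equally clean route is induction on $d$: pairing the two points whose first $d-1$ signs agree but whose last signs differ, the segment joining them meets the hyperplane $\{x_d = 0\}$ at a convex combination whose first $d-1$ coordinates still carry the signs $\varepsilon_1,\dots,\varepsilon_{d-1}$ (being a strictly positive combination of two reals of that common sign); applying the induction hypothesis in $\R^{d-1}$ to these $2^{d-1}$ crossing points, whose convex hull lies inside the original one, then recovers the origin.
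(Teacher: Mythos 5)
Your proposal is correct, and your main argument takes a genuinely different route from the paper. The paper proves the lemma by induction on $d$: it applies the inductive hypothesis separately to the $2^{d-1}$ points with $\varepsilon_1=+1$ and to those with $\varepsilon_1=-1$, producing two convex combinations $a_+$ and $a_-$ whose coordinates all vanish except the first (positive, respectively negative), and then writes $0$ as a convex combination of $a_+$ and $a_-$. Your primary argument instead goes through duality: strict separation of $0$ from the compact convex hull yields $v\neq 0$ with $\langle v,p^{(\varepsilon)}\rangle>0$ for all $\varepsilon$, and the sign vector $\varepsilon_j=-\operatorname{sgn}(v_j)$ (with $\varepsilon_j=+1$ when $v_j=0$) produces a point with $\langle v,p^{(\varepsilon)}\rangle<0$, a contradiction. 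This is sound --- you correctly flag that strictness of the coordinate signs plus $v\neq 0$ is what makes the final inequality strict, and in fact even weak separation ($\langle v,p\rangle\geq 0$ for all $p$, $v\neq 0$) would already suffice against your strict reverse inequality. The separation argument is arguably shorter and makes the role of the sign hypothesis completely transparent, at the modest cost of invoking the separation theorem rather than staying purely combinatorial; the paper's induction is more constructive, exhibiting an explicit convex combination, which fits its subsequent use in Lemma~\ref{lem:Quadrants} where the point $(r,0,\dots,0)$ is built the same way. Your alternative inductive route is essentially the paper's proof in mirrored form: the paper splits on the first coordinate and uses the inductive hypothesis to annihilate coordinates $2,\dots,d$ within each half, whereas you pair points differing only in the last sign and intersect each segment with $\{x_d=0\}$ before applying the inductive hypothesis --- both reduce $2^d$ points to the $(d-1)$-dimensional statement in one step.
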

\begin{proof}
We argue by induction over the dimension $d$. The claim obviously holds for $d=1$. Suppose it is true for dimension $d-1$. Then we can take $2^{d-1}$ points corresponding to $\varepsilon_1=1$ and construct a convex combination $a_+$ of these points such that all coordinates of $a_+$ vanish except the first one (which is positive). Similarly, taking $2^{d-1}$ points corresponding to $\varepsilon_1=-1$ we construct a convex combination $a_-$ with negative first coordinate and all other coordinates being $0$.   Clearly, the origin can now be written as a convex combination of these two points $a_+$ and $a_-$.
\end{proof}
\begin{lemma}\label{lem:Quadrants}
For $r\geq 0$ and $\varepsilon_2,\ldots,\varepsilon_d \in\{-1,+1\}$ define the set
$$
A_{\varepsilon_2,\dots,\varepsilon_d}(r):=\{ (z_1,\dots,z_d) \in \R^d: z_1>r,\, \varepsilon_2 z_2>0,\dots,\varepsilon_d z_d >0 \}.
$$
Suppose that for every choice of $(\varepsilon_2,\dots,\varepsilon_d)$ a point in $A_{\varepsilon_2,\dots,\varepsilon_d}(r)$ and another point in $-A_{\varepsilon_2,\dots,\varepsilon_d}(r)$ are given. Then $(r,0,\dots,0)$ can be represented as a convex combination of these points.
\end{lemma}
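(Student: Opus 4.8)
The plan is to reduce the statement to the preceding Lemma~\ref{lem:OriginInConvexCombination} by an affine change of variables. First I would translate the whole configuration by $-(r,0,\ldots,0)$, so that the target point $(r,0,\ldots,0)$ becomes the origin; it then suffices to show that the origin lies in the convex hull of the $2^d$ translated points. The crucial feature of this translation is that it leaves the coordinates $z_2,\ldots,z_d$ unchanged and only shifts $z_1$.

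Next I would determine the sign pattern of each translated point. A point originally in $A_{\varepsilon_2,\ldots,\varepsilon_d}(r)$ has $z_1>r$, hence after subtracting $r$ its first coordinate is strictly positive, while its remaining coordinates keep the signs $\varepsilon_2,\ldots,\varepsilon_d$; thus it lies in the open orthant with sign vector $(+,\varepsilon_2,\ldots,\varepsilon_d)$. A point in $-A_{\varepsilon_2,\ldots,\varepsilon_d}(r)$ has $z_1<-r$ and coordinates $z_i$ of sign $-\varepsilon_i$ for $i\geq 2$; since $r\geq 0$, subtracting $r$ keeps the first coordinate strictly negative, so the translated point lies in the open orthant with sign vector $(-,-\varepsilon_2,\ldots,-\varepsilon_d)$.

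The key combinatorial observation is that, as $(\varepsilon_2,\ldots,\varepsilon_d)$ ranges over $\{-1,+1\}^{d-1}$, the sign vectors $(+,\varepsilon_2,\ldots,\varepsilon_d)$ enumerate exactly the $2^{d-1}$ orthants with positive first coordinate, while the sign vectors $(-,-\varepsilon_2,\ldots,-\varepsilon_d)$ enumerate exactly the $2^{d-1}$ orthants with negative first coordinate. Together these exhaust all $2^d$ orthants of $\R^d$, and the $2^d$ translated points furnish exactly one representative, with the prescribed strict signs, in each. Lemma~\ref{lem:OriginInConvexCombination} then applies verbatim and yields that the origin is contained in their convex hull; translating back by $+(r,0,\ldots,0)$ gives the claim.

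I would expect the only point requiring care to be the bookkeeping in the previous paragraph, namely checking that the two families of sign vectors are disjoint and jointly surjective onto $\{-1,+1\}^d$, and that strictness of the first-coordinate signs survives the shift by $r$ (which is exactly where the hypothesis $r\geq 0$ enters). Everything else is a direct invocation of Lemma~\ref{lem:OriginInConvexCombination}.
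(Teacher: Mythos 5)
Your proof is correct, and it takes a genuinely different (though closely related) route from the paper's. The paper splits the $2^d$ given points into two groups of $2^{d-1}$: applying the argument of Lemma~\ref{lem:OriginInConvexCombination} to the last $d-1$ coordinates within the group drawn from the sets $A_{\varepsilon_2,\ldots,\varepsilon_d}(r)$, it builds a convex combination $a_+$ lying on the $z_1$-axis with first coordinate larger than $r$; doing the same with the group from the sets $-A_{\varepsilon_2,\ldots,\varepsilon_d}(r)$ yields $a_-$ on the axis with first coordinate smaller than $-r$; then $(r,0,\ldots,0)$ is obtained by a one-dimensional interpolation between $a_+$ and $a_-$, which is where $r\geq 0$ enters there. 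You instead translate by $-(r,0,\ldots,0)$ and invoke Lemma~\ref{lem:OriginInConvexCombination} exactly once, in full dimension $d$, after verifying that the $2^d$ translated points realize every sign pattern in $\{-1,+1\}^d$ exactly once; your bookkeeping is accurate on all the points that matter, namely that the shift preserves strict positivity of the first coordinate for points coming from $A_{\varepsilon_2,\ldots,\varepsilon_d}(r)$, that $r\geq 0$ guarantees strict negativity for points from $-A_{\varepsilon_2,\ldots,\varepsilon_d}(r)$ (their first coordinate is below $-r$, hence below $-2r\leq 0$ after the shift), and that $(\varepsilon_2,\ldots,\varepsilon_d)\mapsto(-\varepsilon_2,\ldots,-\varepsilon_d)$ is a bijection of $\{-1,+1\}^{d-1}$, so the two families of sign vectors are disjoint and exhaust all orthants. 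What each approach buys: yours is a clean black-box reduction showing that Lemma~\ref{lem:Quadrants} is Lemma~\ref{lem:OriginInConvexCombination} in disguise after an affine change of coordinates, with all the work concentrated in a transparent combinatorial check; the paper's version re-runs a lower-dimensional instance of the lemma twice plus an interpolation step, which is slightly longer but keeps the target point and the role of the threshold $r$ explicitly visible throughout (the axis points it constructs clear $\pm r$, so the final segment visibly contains $(r,0,\ldots,0)$).
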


\begin{proof}
By Lemma \ref{lem:OriginInConvexCombination} we can take all points in $A_{\varepsilon_2,\dots,\varepsilon_d}(r)$ or all points in $-A_{\varepsilon_2,\dots,\varepsilon_d}(r)$, respectively, corresponding to all choices of $\varepsilon_2,\dots,\varepsilon_d$ and construct a convex combination of these points such that all coordinates are zero except the first one (which is larger than $r$ or smaller than $-r$, respectively). Obviously, there exists a convex combination of these two points which is equal to $(r,0,\ldots,0)$.
\end{proof}
\begin{lemma}\label{lem:absorbtionQuadrant}
Fix $\varepsilon_2,\ldots,\varepsilon_d\in\{-1,+1\}$ and let $\xi=(\xi_1,\ldots,\xi_d) \in \R^d$ be a random vector with Cauchy-type distribution as in Proposition \ref{prop:cauchy}. Then for all $r>0$ and $n\in\N$ we have
\[
\P\left( \frac{\xi}{n} \in A_{\varepsilon_2,\dots,\varepsilon_d}(r) \right) \geq \frac{1}{\pi 2^{d-1}}\frac{1}{rn+1}.
\]
\end{lemma}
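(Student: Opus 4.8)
The plan is to collapse the $d$-dimensional probability onto a one-dimensional tail of the first coordinate via the symmetry of the Cauchy-type density, and then to bound that tail by an elementary estimate. First I would rewrite the event in terms of the coordinates of $\xi$: by the definition of $A_{\varepsilon_2,\dots,\varepsilon_d}(r)$ and the scaling by $n$,
\[
\Big\{\tfrac{\xi}{n}\in A_{\varepsilon_2,\dots,\varepsilon_d}(r)\Big\}
=\{\xi_1>rn,\ \varepsilon_2\xi_2>0,\ \dots,\ \varepsilon_d\xi_d>0\}.
\]
The density of $\xi$ in Proposition~\ref{prop:cauchy} depends on $x$ only through $\|x\|$, hence it is even in each single coordinate. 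Therefore, for fixed $x_1$, each of the $2^{d-1}$ sign-orthants in the variables $x_2,\dots,x_d$ carries the same mass, and since the coordinate hyperplanes are Lebesgue-null this gives the exact factorization
\[
\P\Big(\tfrac{\xi}{n}\in A_{\varepsilon_2,\dots,\varepsilon_d}(r)\Big)=\frac{1}{2^{d-1}}\,\P(\xi_1>rn),
\]
independently of the chosen signs $(\varepsilon_2,\dots,\varepsilon_d)$.

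Second I would identify the marginal law of $\xi_1$. Integrating the density of Proposition~\ref{prop:cauchy} over $x_2,\dots,x_d$ and substituting $y=\sqrt{1+t^2}\,u$ in $\R^{d-1}$ reveals that the marginal density of $\xi_1$ is a constant multiple of $(1+t^2)^{(d-1-(d+1))/2}=(1+t^2)^{-1}$; equivalently, this marginalization is the distributional analogue of the projection property in Lemma~\ref{lem:projection}. A probability density proportional to $(1+t^2)^{-1}$ that integrates to one over $\R$, where $\int_\R (1+t^2)^{-1}\,\dd t=\pi$, must be the standard Cauchy density $\tfrac1\pi(1+t^2)^{-1}$. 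Consequently, for $rn>0$,
\[
\P(\xi_1>rn)=\frac1\pi\int_{rn}^{\infty}\frac{\dd t}{1+t^2}=\frac1\pi\arctan\frac{1}{rn}.
\]

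Finally I would invoke the elementary inequality $\arctan x\ge \tfrac{x}{1+x}$ for $x\ge 0$, which holds because both sides vanish at $x=0$ and $\tfrac{\dd}{\dd x}\arctan x=(1+x^2)^{-1}\ge (1+x)^{-2}=\tfrac{\dd}{\dd x}\tfrac{x}{1+x}$, the last inequality being $(1+x)^2\ge 1+x^2$. Taking $x=1/(rn)$ gives $\arctan\tfrac{1}{rn}\ge \tfrac{1}{rn+1}$, and combining the three displays yields
\[
\P\Big(\tfrac{\xi}{n}\in A_{\varepsilon_2,\dots,\varepsilon_d}(r)\Big)=\frac{1}{2^{d-1}}\cdot\frac1\pi\arctan\frac{1}{rn}\ge \frac{1}{\pi 2^{d-1}}\cdot\frac{1}{rn+1},
\]
which is the assertion (the case $d=1$ being included, with an empty sign pattern and $2^{d-1}=1$). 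I do not expect a genuine obstacle: the only points deserving care are justifying the conditional-symmetry factorization so as to obtain the \emph{exact} factor $2^{-(d-1)}$ (this uses the absolute continuity of the law of $\xi$) and pinning down the normalizing constant of the marginal, while the tail estimate is a one-line calculus bound.
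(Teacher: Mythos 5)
Your proof is correct and takes essentially the same route as the paper: sign-symmetry in the coordinates $x_2,\dots,x_d$ reduces the event to the tail $\P(\xi_1>rn)$ with the exact factor $2^{-(d-1)}$, the first coordinate is standard Cauchy, and an elementary estimate on the arctangent tail finishes. The only cosmetic differences are that the paper cites the Cauchy marginal (Lemma 4.3(b) of the beta-polytopes paper) instead of integrating it out as you do, and writes the final inequality as $\arctan(x)\leq \frac{\pi}{2}-\frac{1}{x+1}$, which under $x=1/y$ is exactly your $\arctan(y)\geq \frac{y}{1+y}$.
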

\begin{proof}
Every coordinate of $\xi$ has a one-dimensional Cauchy distribution; see, e.g., Lemma 4.3(b) in~\cite{beta_polytopes}. Since the distribution of $\xi$ is the same as that of $(\pm\xi_1,\ldots,\pm\xi_d)$ for every choice of signs,
\[
\P\left( \frac{\xi}{n} \in A_{\varepsilon_2,\dots,\varepsilon_d}(r) \right) = \left( \frac{1}{2} \right)^{d-1} \P(\xi_1 > rn) = \left( \frac{1}{2} \right)^{d-1}\left(\frac{1}{2}-\frac{1}{\pi}\arctan(rn)\right) \geq \frac{1}{\pi 2^{d-1}}\frac{1}{rn+1}
\]
by the inequality $\arctan(x)\leq \frac{\pi}{2}-\frac{1}{x+1}$ which holds for all $x\geq 0$.
\end{proof}

\begin{lemma}\label{lem:NonabsorbtionCauchyPolytope}
Let $\xi^{(1)},\dots,\xi^{(n)} \in \R^d$ be independent random vectors with a Cauchy-type distribution as in Proposition \ref{prop:cauchy}. Then, there exist constants $c_1,c_2>0$ only depending on $d$ such that, for all $r>0$ and $n\in\N$,
\[
\P\bigg( re_1 \notin \conv\bigg\{  \frac{\xi^{(1)}}{n},\dots,\frac{\xi^{(n)}}{n} \bigg\} \bigg)
\leq
c_1\exp\bigg( -\frac{c_2}{r+\frac{1}{n}} \bigg).
\]
\end{lemma}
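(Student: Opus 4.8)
The plan is to combine the three preparatory lemmas with a single union bound. The geometric heart of the matter is Lemma~\ref{lem:Quadrants}: if, for every sign pattern $(\varepsilon_2,\dots,\varepsilon_d)\in\{-1,+1\}^{d-1}$, at least one of the rescaled points $\xi^{(i)}/n$ lies in $A_{\varepsilon_2,\dots,\varepsilon_d}(r)$ and at least one lies in $-A_{\varepsilon_2,\dots,\varepsilon_d}(r)$, then $re_1$ is a convex combination of these sampled points, so that $re_1\in\conv\{\xi^{(1)}/n,\dots,\xi^{(n)}/n\}$. Contrapositively, the event $\{re_1\notin\conv\{\xi^{(1)}/n,\dots,\xi^{(n)}/n\}\}$ is contained in the union, over the $2^d$ regions $\pm A_{\varepsilon_2,\dots,\varepsilon_d}(r)$, of the events that the region in question is missed by all $n$ sampled points.

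First I would fix one of these $2^d$ regions, call it $A$, and bound the probability that $A$ contains none of the points $\xi^{(1)}/n,\dots,\xi^{(n)}/n$. By independence this probability equals $(1-p)^n$, where $p:=\P(\xi/n\in A)$. Lemma~\ref{lem:absorbtionQuadrant} gives the bound $p\ge \frac{1}{\pi 2^{d-1}}\frac{1}{rn+1}$ when $A=A_{\varepsilon_2,\dots,\varepsilon_d}(r)$; the same bound holds for $A=-A_{\varepsilon_2,\dots,\varepsilon_d}(r)$ because $\xi$ and $-\xi$ have the same (radially symmetric) Cauchy-type law, so $\P(\xi/n\in -A_{\varepsilon_2,\dots,\varepsilon_d}(r))=\P(\xi/n\in A_{\varepsilon_2,\dots,\varepsilon_d}(r))$. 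Using $(1-p)^n\le e^{-np}$ together with the identity $\frac{n}{rn+1}=\frac{1}{r+1/n}$, this yields
\[
(1-p)^n\le \exp\Big(-\frac{1}{\pi 2^{d-1}}\frac{1}{r+\frac1n}\Big).
\]

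A union bound over all $2^d$ regions then gives
\[
\P\Big( re_1\notin\conv\big\{\xi^{(1)}/n,\dots,\xi^{(n)}/n\big\}\Big)\le 2^d\exp\Big(-\frac{1}{\pi 2^{d-1}}\frac{1}{r+\frac1n}\Big),
\]
so the claim holds with $c_1=2^d$ and $c_2=\frac{1}{\pi 2^{d-1}}$, both depending only on $d$.

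I do not expect any serious obstacle, since Lemmas~\ref{lem:OriginInConvexCombination}--\ref{lem:absorbtionQuadrant} have already isolated all the real work. The only point requiring care is the logical reduction in the first step: one must check that the hypotheses of Lemma~\ref{lem:Quadrants} are met precisely when each of the $2^d$ cones $\pm A_{\varepsilon_2,\dots,\varepsilon_d}(r)$ is hit, and that there are indeed $2^{d-1}+2^{d-1}=2^d$ such cones (the $2^{d-1}$ choices of $(\varepsilon_2,\dots,\varepsilon_d)$ together with the two signs of the first coordinate), so that the constant $c_1$ comes out as $2^d$.
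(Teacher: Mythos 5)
Your proposal is correct and follows essentially the same route as the paper's proof: reduce via Lemma~\ref{lem:Quadrants} to the event that one of the $2^d$ cones $\pm A_{\varepsilon_2,\dots,\varepsilon_d}(r)$ is missed, apply a union bound together with the lower bound of Lemma~\ref{lem:absorbtionQuadrant} (extended to the reflected cones by the symmetry of the Cauchy-type law), and finish with $(1-p)^n\le e^{-np}$, obtaining the same constants $c_1=2^d$ and $c_2=\frac{1}{\pi 2^{d-1}}$.
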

\begin{proof}
By Lemma \ref{lem:Quadrants}, if each of the $2^d$ sets of the form $A_{\varepsilon_2,\ldots,\varepsilon_d}(r)$ and $-A_{\varepsilon_2,\ldots,\varepsilon_d}(r)$ for $\varepsilon_2,\ldots,\varepsilon_d\in\{-1,+1\}$ contains an element from the set $\{\xi^{(1)}/n,\ldots,\xi^{(n)}/n\}$, then we can write $re_1$ as a convex combination of these points. Taking the complementary event we can estimate
\begin{align*}
&\P\bigg( re_1 \notin \conv\bigg\{  \frac{\xi^{(1)}}{n},\dots,\frac{\xi^{(n)}}{n} \bigg\} \bigg)\\
&\qquad\qquad\leq \P\bigg(\text{for some }(\varepsilon_1,\ldots,\varepsilon_d)\in\{-1,+1\}^d:\varepsilon_1A_{\varepsilon_2,\ldots,\varepsilon_d}(r)\cap \bigg\{  \frac{\xi^{(1)}}{n},\dots,\frac{\xi^{(n)}}{n} \bigg\}=\varnothing\bigg).
\end{align*}
Taking the union bound, using Lemma \ref{lem:absorbtionQuadrant} and writing $c_2 = \frac 1 {\pi 2^{d-1}}$ for the constant from this lemma we arrive at
\begin{multline*}
\P\bigg( re_1 \notin \conv\bigg\{  \frac{\xi^{(1)}}{n},\dots,\frac{\xi^{(n)}}{n} \bigg\} \bigg)  \leq 2^d\, \P\bigg( \bigg\{ \frac{\xi^{(1)}}{n},\dots,\frac{\xi^{(n)}}{n} \bigg\} \cap A_{+1,\dots,+1}(r) = \varnothing \bigg)\\
= 2^d \,\left( 1- \P\left( \frac{\xi^{(1)}}{n} \in A_{+1,\dots,+1}(r) \right) \right)^n\leq 2^d \,\left( 1- \frac{c_2}{rn+1} \right)^n\leq 2^d\exp\left(-\frac{c_2n}{rn+1}\right),
\end{multline*}
where the last inequality follows since $\log(1-x)\leq -x$ for $x<1$. Putting $c_1:=2^d$ completes the proof.
\end{proof}

\begin{lemma}\label{lem:NonabsorbtionCauchyPolytope_sphere}
Let $\xi^{(1)},\dots,\xi^{(n)} \in \R^d$ be as in Lemma \ref{lem:NonabsorbtionCauchyPolytope}. Then, there exist constants $c_1,c_2,c_3>0$ only depending on $d$ such that, for all $r>0$ and $n\in\N$,
\[
\P\bigg(\conv\bigg\{  \frac{\xi^{(1)}}{n},\dots,\frac{\xi^{(n)}}{n} \bigg\}\not\supset B_{r}(0) \bigg)
\leq
c_1\exp\bigg( -\frac{1}{c_2r+\frac{c_3}{n}} \bigg).
\]
\end{lemma}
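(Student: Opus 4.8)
The plan is to upgrade the single-point non-absorption estimate of Lemma~\ref{lem:NonabsorbtionCauchyPolytope} to a statement about the inclusion of an entire ball, by means of a covering argument on the unit sphere combined with the rotational invariance of the Cauchy-type law. Throughout write $K_n := \conv\{\xi^{(1)}/n,\dots,\xi^{(n)}/n\}$.

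First I would record the elementary geometric fact that a convex body contains $B_r(0)$ precisely when its support function is everywhere at least $r$. Since $\xi^{(i)},-\xi^{(i)}$ and all sign flips share the same law, one has $ru\in K_n$ and $-ru\in K_n$ simultaneously, and the inclusion $B_r(0)\subseteq K_n$ is equivalent to $ru\in K_n$ for every unit vector $u\in\Sd$: from $ru,-ru\in K_n$ and convexity one recovers $0\in K_n$ and then the whole ball.

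Next I would discretize the sphere. Fix the angle $\theta$ with $\cos\theta=1/2$ and choose a finite $\theta$-net $u_1,\dots,u_N\in\Sd$, meaning that every $u\in\Sd$ satisfies $\langle u,u_j\rangle\ge 1/2$ for some $j$; the cardinality $N=N(d)$ depends only on $d$ by compactness of $\Sd$. The key claim is that if $R u_j\in K_n$ for all $j$, then $B_{R/2}(0)\subseteq K_n$. Indeed, the polytope $P:=\conv\{R u_j : j=1,\dots,N\}$ is contained in $K_n$, and its \emph{support function} satisfies $h_P(w)=R\max_j\langle u_j,w\rangle\ge R\cos\theta=R/2$ for every unit $w$, whence $B_{R/2}(0)\subseteq P\subseteq K_n$ by the characterization of the first step. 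Applying this with $R=2r$ and taking contrapositives gives $\{B_r(0)\not\subseteq K_n\}\subseteq\bigcup_{j=1}^N\{2r\,u_j\notin K_n\}$.

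Finally I would apply the union bound and invoke rotational invariance. Since the $\xi^{(i)}$ are i.i.d.\ with the rotationally symmetric density of Proposition~\ref{prop:cauchy}, the law of $K_n$ is invariant under rotations, so $\P(2r\,u_j\notin K_n)=\P(2r\,e_1\notin K_n)$ for every $j$. Hence
$$
\P\big(B_r(0)\not\subseteq K_n\big)\le N\,\P\big(2r\,e_1\notin K_n\big)\le Nc_1\exp\Big(-\frac{c_2}{2r+1/n}\Big)
$$
by Lemma~\ref{lem:NonabsorbtionCauchyPolytope}. Absorbing $N$ and the factor $2$ into new constants yields the assertion, since $\frac{c_2}{2r+1/n}=\frac{1}{(2/c_2)r+(1/c_2)/n}$, so that one may take the new constants to be $Nc_1$, $2/c_2$ and $1/c_2$, all depending only on $d$. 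The only genuinely non-routine point is the reduction from the continuum of directions to finitely many: the covering argument shows that controlling the absorption of the $N$ net points $2r\,u_j$ suffices to capture a ball of radius $r$, and the support-function estimate quantifies the loss incurred (the factor $\cos\theta$); rotational invariance and the union bound are then immediate.
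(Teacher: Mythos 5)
Your proposal is correct and takes essentially the same approach as the paper: the paper reduces the ball containment to the $2d$ points $\pm\, rC(d)e_j$ (the vertices of a cross-polytope, i.e.\ an explicit finite net), applies symmetry and the union bound, and then invokes Lemma~\ref{lem:NonabsorbtionCauchyPolytope} at a rescaled radius --- precisely the structure of your argument with the $N(d)$-point spherical net and the support-function estimate. One harmless blemish: your opening claim that sign-flip invariance \emph{in law} yields $ru\in K_n$ and $-ru\in K_n$ ``simultaneously'' is not a valid pathwise deduction (and is unnecessary, since a convex set contains $B_r(0)$ iff it contains the sphere $r\,\mathbb{S}^{d-1}$), but nothing in your actual chain of reasoning relies on it.
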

\begin{proof}
Let $e_1,\ldots,e_d$ be the standard orthonormal basis of $\R^d$.
Pick a constant $C(d)$ such that for all $r>0$  the cross-polytope $\conv\{\pm rC(d)e_j,j=1,\ldots,d\}$ contains $B_r(0)$. Then
\begin{align*}
&\P\bigg(\conv\bigg\{  \frac{\xi^{(1)}}{n},\dots,\frac{\xi^{(n)}}{n} \bigg\}\not\supset B_{r}(0) \bigg)\\
&\leq \P\bigg( \eps rC(d)e_j\notin \conv\bigg\{  \frac{\xi^{(1)}}{n},\dots,\frac{\xi^{(n)}}{n} \bigg\}\text{ for some } j=1,\ldots,d \text{ and } \eps\in \{+1,-1\} \bigg)\\
&\leq 2d\, \P\bigg(rC(d)e_1\notin \conv\bigg\{  \frac{\xi^{(1)}}{n},\dots,\frac{\xi^{(n)}}{n} \bigg\}\bigg).
\end{align*}
The claim now follows from Lemma \ref{lem:NonabsorbtionCauchyPolytope} with $rC(d)$ in place of $r$.
\end{proof}

\subsection{Some properties of the uniform distribution on the half-sphere}

\begin{lemma}
Let $U:=(\xi_0,\xi_1,\ldots,\xi_d)$ be a random vector with the uniform distribution on the $d$-dimensional half-sphere $\mathbb S_+^{d}$. Then  $\xi_0$ has probability density
\begin{equation}\label{eq:first_coordinate_density}
t\mapsto\frac{2\Gamma\left(\frac{d+1}{2}\right)}{\sqrt{\pi}\Gamma\left(\frac{d}{2}\right)}(1-t^2)^{{d\over 2}-1},\qquad t\in [0,1].
\end{equation}
\end{lemma}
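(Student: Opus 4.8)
The plan is to slice the half-sphere by the horizontal hyperplanes $\{x_0 = t\}$ and read off the resulting surface-area density in the variable $t=\xi_0$. Recall that the uniform distribution on $\SSd_+$ is by definition the normalized $d$-dimensional Hausdorff (surface) measure $\mathcal H^d$. First I would parametrize $\SSd_+\cap\{x_0>0\}$ by $(t,\theta)\in(0,1)\times\Sd$ through the map $\Phi(t,\theta)=(t,\sqrt{1-t^2}\,\theta)$, so that the slice at height $t$ is the scaled sphere $\sqrt{1-t^2}\,\Sd$; the omitted boundary values $t=0$ and $t=1$ form an $\mathcal H^d$-null set and may be ignored.

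Next I would compute the surface-area element of $\Phi$ via the Gram matrix of $D\Phi$. A short computation shows that the induced metric is block-diagonal: one finds $|\partial_t\Phi|^2=1/(1-t^2)$, the $t$-direction is orthogonal to the angular directions (because $\theta\perp T_\theta\Sd$, killing the cross terms), and the angular block equals $(1-t^2)$ times the round metric $g$ on $\Sd$. Hence the Gram determinant is $(1-t^2)^{-1}(1-t^2)^{d-1}\det g=(1-t^2)^{d-2}\det g$, and the surface measure factorizes as
\[
\mathcal H^d(\dd x) = (1-t^2)^{\frac d2-1}\,\dd t\,\sigma(\dd\theta),
\]
where $\sigma$ is the surface measure on $\Sd$ of total mass $\omega_d$. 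Integrating out $\theta$ shows that the law of $\xi_0$ is proportional to $(1-t^2)^{\frac d2-1}\,\dd t$ on $[0,1]$.

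Finally I would fix the normalizing constant. By the substitution $u=t^2$ and the definition of Euler's beta function,
\[
\int_0^1 (1-t^2)^{\frac d2-1}\,\dd t = \tfrac12 B\!\left(\tfrac12,\tfrac d2\right) = \frac{\sqrt\pi\,\Gamma(\tfrac d2)}{2\,\Gamma(\tfrac{d+1}{2})},
\]
so dividing $(1-t^2)^{\frac d2-1}$ by this integral yields exactly $\frac{2\Gamma(\frac{d+1}{2})}{\sqrt\pi\,\Gamma(\frac d2)}(1-t^2)^{\frac d2-1}$, which is \eqref{eq:first_coordinate_density}. As a consistency check, this constant equals $2\omega_d/\omega_{d+1}$, reflecting that the angular mass is $\omega_d$ while the half-sphere has total surface area $\omega_{d+1}/2$. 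The only mildly technical point is the Gram-determinant computation; the rest is a one-line beta integral and bookkeeping, so I do not expect any genuine obstacle.
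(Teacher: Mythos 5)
Your proposal is correct and takes essentially the same approach as the paper: both compute the marginal of $x_0$ by slicing the half-sphere with the hyperplanes $\{x_0=t\}$, so that the surface measure factorizes as $(1-t^2)^{d/2-1}\,\dd t\,\sigma(\dd\theta)$ over $[0,1]\times\mathbb{S}^{d-1}$. The only difference is one of self-containedness: the paper cites this slice-integration formula from Axler--Bourdon--Ramey (obtaining the normalizing constant directly as $2\omega_d/\omega_{d+1}$), whereas you derive it from scratch via the Gram determinant of the parametrization $(t,\theta)\mapsto(t,\sqrt{1-t^2}\,\theta)$ and then fix the constant by a beta integral --- both computations are carried out correctly.
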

\begin{proof}
This follows from the slice integration formula for spheres~\cite[Corollary A.5]{AxlerEtalHamonicFunctionTheory}, according to which the distribution function of $\xi_0$ equals
\begin{align*}
2\int_{\Sd_+}\ind_{\{x_0<t\}}\, \bar \sigma(\dd x) = {2\omega_d\over\omega_{d+1}}\int_0^t(1-x^2)^{d-3\over 2}\,\dd t,\qquad t\in[0,1],
\end{align*}
where $\bar \sigma$ is the normalized spherical Lebesgue measure on $\Sd$. Differentiation with respect to $t$ and the definitions of $\omega_{d}$ and $\omega_{d+1}$ yield \eqref{eq:first_coordinate_density}. 
\end{proof}

Recall from \eqref{eq:DefMappingP} the definition of the mapping $\cP:\mathbb{S}_+^d\cap\{x_0>0\}\to\R^d$.

\begin{lemma}\label{lem:lemma_reg_var}
Let $U:=(\xi_0,\xi_1,\ldots,\xi_d)$ be a random vector with the uniform distribution on the $d$-dimensional half-sphere $\mathbb S_+^{d}$. Then the distribution of the vector $\mathcal{P}(U)=(\xi_1/\xi_0,\ldots,\xi_d/\xi_0)$ is regularly varying in $\mathbb{R}^d$ and we have the vague convergence
\begin{equation}\label{eq:reg_var}
n\mathbb{P}\big(n^{-1}\mathcal{P}(U) \in \cdot \big)\overset{\rm v}{\longrightarrow} \nu(\cdot)
\end{equation}
on $\cM_{\R^d\backslash \{0\}}$, as $n\to\infty$, where $\nu$ is a measure on $\mathbb{R}^d\backslash \{0\}$ with density \eqref{eq:density} and with $\gamma=1$ and $c=2$.
\end{lemma}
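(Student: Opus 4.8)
The plan is to exploit the explicit density of $\cP(U)$ furnished by Proposition~\ref{prop:cauchy} and to reduce the vague convergence \eqref{eq:reg_var} to a pointwise convergence of densities together with a uniform integrable majorant. Recall that multivariate regular variation of a probability distribution is, by definition, precisely the existence of a normalising sequence (here $a_n=n$) and a non-null limit measure $\nu$ on $\R^d\backslash\{0\}$ for which $n\,\P(n^{-1}\cP(U)\in\,\cdot\,)\overset{\rm v}{\longrightarrow}\nu$. Hence establishing \eqref{eq:reg_var} simultaneously proves that $\cP(U)$ is regularly varying, and no separate argument for the first assertion is needed.

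First I would write down the density of the rescaled measure $\mu_n(\,\cdot\,):=n\,\P(n^{-1}\cP(U)\in\,\cdot\,)$. Since, by Proposition~\ref{prop:cauchy}, $\cP(U)$ has density $x\mapsto (2/\omega_{d+1})(1+\|x\|^2)^{-(d+1)/2}$, the vector $n^{-1}\cP(U)$ has density $x\mapsto n^d(2/\omega_{d+1})(1+n^2\|x\|^2)^{-(d+1)/2}$, so $\mu_n$ is absolutely continuous with the Lebesgue density
$$
p_n(x)=\frac{2n^{d+1}}{\omega_{d+1}}\,\frac{1}{(1+n^2\|x\|^2)^{(d+1)/2}},
$$
which is precisely the function already recorded in \eqref{eq:Densitypn}. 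For every fixed $x\in\R^d\backslash\{0\}$ one has $1+n^2\|x\|^2\sim n^2\|x\|^2$ as $n\to\infty$, so $p_n(x)\to (2/\omega_{d+1})\|x\|^{-(d+1)}$, which is exactly the density in \eqref{eq:density} with $\gamma=1$ and $c=2$, i.e.\ the intensity of $\Pi_{d,1}(2)$. Moreover, dropping the summand $1$ in the denominator only enlarges the fraction, giving the clean uniform bound $p_n(x)\le (2/\omega_{d+1})\|x\|^{-(d+1)}$ valid for all $n$ and all $x\neq 0$.

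To pass from pointwise convergence of densities to vague convergence of measures, I would test against an arbitrary continuous $f\colon\R^d\backslash\{0\}\to[0,\infty)$ with compact support. The crucial observation is that $\operatorname{supp} f$ is a compact subset of $\R^d\backslash\{0\}$, hence contained in an annulus $\{r_1\le\|x\|\le r_2\}$ with $0<r_1<r_2<\infty$; there the majorant satisfies $(2/\omega_{d+1})\|x\|^{-(d+1)}\le (2/\omega_{d+1})r_1^{-(d+1)}$, so $f\,p_n$ is dominated by the integrable function $\|f\|_\infty(2/\omega_{d+1})r_1^{-(d+1)}\ind_{\operatorname{supp} f}$. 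The dominated convergence theorem then yields
$$
\int_{\R^d}f(x)\,\mu_n(\dd x)=\int_{\R^d}f(x)\,p_n(x)\,\dd x\;\longrightarrow\;\int_{\R^d}f(x)\,\frac{2}{\omega_{d+1}}\,\frac{\dd x}{\|x\|^{d+1}},
$$
which is $\int f\,\dd\nu$ for the measure $\nu$ of the statement. As $f$ was arbitrary, this is the asserted vague convergence \eqref{eq:reg_var}.

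The argument is entirely routine; the only point that genuinely requires care—and the reason the singularity of $\|x\|^{-(d+1)}$ at the origin causes no difficulty—is that vague convergence on the punctured space $\R^d\backslash\{0\}$ demands only test functions whose support is bounded away from both $0$ and $\infty$. On such supports the non-integrable tail at the origin and the mass escaping to infinity are invisible, so the elementary domination above suffices and no truncation or tightness estimate is needed.
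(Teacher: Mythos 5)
Your proposal is correct: the density computation, the pointwise limit, the uniform majorant $p_n(x)\le (2/\omega_{d+1})\|x\|^{-(d+1)}$, and the dominated-convergence step against test functions supported in an annulus are all valid, and identifying regular variation with the existence of the vague limit \eqref{eq:reg_var} is the standard definition, so no separate argument is needed there.

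However, your route differs from the paper's. The paper does not touch the multivariate density at all in this proof: it uses Proposition~\ref{prop:cauchy} only to conclude that the law of $\cP(U)$ is spherically symmetric, and then reduces \eqref{eq:reg_var} to the one-dimensional tail convergence $n\,\P(n^{-1}\|\cP(U)\|>r)\to\nu(\{\|x\|>r\})$ for every $r>0$ (an equivalence that is valid for rotation-invariant measures, though the paper leaves this step implicit). The tail itself is computed purely geometrically, via $\|\cP(U)\|>nr \iff 1-\xi_0^2>n^2r^2\xi_0^2 \iff \xi_0<(n^2r^2+1)^{-1/2}$, using the density \eqref{eq:first_coordinate_density} of the height coordinate $\xi_0$; the constant is then matched to $\nu$ by a spherical-coordinates integration. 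Your argument instead verifies the definition of vague convergence directly from the scaled densities $p_n$ of \eqref{eq:Densitypn} — essentially the same pointwise-limit-plus-domination facts the paper itself exploits later, in the proof of Theorem~\ref{theo:conv_of_angle}. What each approach buys: yours is more self-contained (no appeal to the symmetry-plus-tails equivalence, no need for the lemma on the law of $\xi_0$), while the paper's is aligned with the standard regular-variation formalism of Resnick that is subsequently cited (Proposition 3.21 there) to convert \eqref{eq:reg_var} into Poisson process convergence, and it would go through even without an explicit formula for the joint density of $\cP(U)$.
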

\begin{proof}

From Proposition \ref{prop:cauchy} we know that the distribution of $\mathcal{P}(U)$ is spherically symmetric in $\mathbb{R}^d$. Whence, \eqref{eq:reg_var} is equivalent to
$$
\lim_{n\to\infty}n\mathbb{P}(n^{-1}\|\mathcal{P}(U)\|>r)=\nu(\{x\in\mathbb{R}^d:\|x\|>r\})=\frac{2}{\omega_{d+1}}\int_{\{\|x\|>r\}}\frac{\dd x}{\|x\|^{d+1}}
$$
for every $r>0$. We have
\begin{align*}
n\mathbb{P}(n^{-1}\|\mathcal{P}(U)\|>r)&=n\mathbb{P}(\xi_1^2+\cdots+\xi_d^2>n^2r^2\xi_0^2)=n\mathbb{P}(1-\xi_0^2>n^2r^2\xi_0^2)\\
&=n\mathbb{P}(\xi_0<(n^2r^2+1)^{-1/2})\longrightarrow \frac{2\Gamma\left(\frac{d+1}{2}\right)}{\sqrt{\pi}\Gamma\left(\frac{d}{2}\right)}{1\over r},
\end{align*}
as $n\to\infty$, having utilized formula \eqref{eq:first_coordinate_density} in the last passage. It remains to verify that
$$
\frac{2}{\omega_{d+1}}\int_{\{\|x\|>r\}}\frac{\dd x}{\|x\|^{d+1}}=\frac{2\Gamma\left(\frac{d+1}{2}\right)}{\sqrt{\pi}\Gamma\left(\frac{d}{2}\right)}{1\over r}.
$$
This is done by transformation into spherical coordinates:
\begin{align*}
\frac{2}{\omega_{d+1}}\int_{\{\|x\|>r\}}\frac{\dd x}{\|x\|^{d+1}} = {2\omega_d\over\omega_{d+1}}\int_r^\infty {\dd s\over s^2} = {2\omega_d\over\omega_{d+1}}{1\over r}=\frac{2\Gamma\left(\frac{d+1}{2}\right)}{\sqrt{\pi}\Gamma\left(\frac{d}{2}\right)}{1\over r},
\end{align*}
where we used the definition of $\omega_d$. The proof is complete.
\end{proof}

\section*{Acknowledgement}
We would like to that the referee, whose comments helped us to improved our text.\\
The work of AM was supported by the return fellowship of the Alexander von Humboldt foundation. DT was supported by the Deutsche Forschungsgemeinschaft (DFG) via RTG 2131 {\it High-Dimensional Phenomena in Probability -- Fluctuations and Discontinuity}. ZK and CT were supported by the DFG Scientific Network {\it Cumulants, Concentration and Superconcentration}.

\addcontentsline{toc}{section}{References}
\bibliography{convex_hull_PPP}
\bibliographystyle{plainnat}

\vspace{1cm}

\footnotesize

\textsc{Zakhar Kabluchko:} Institut f\"ur Mathematische Stochastik, Westf\"alische Wilhelms-Universit\"at M\"unster\\
\textit{E-mail}: \texttt{zakhar.kabluchko@uni-muenster.de}

\bigskip

\textsc{Alexander Marynych:} Faculty of Computer Science and Cybernetics, Taras Shev\-chen\-ko National University of Kyiv\\
\textit{E-mail}: \texttt{marynych@unicyb.kiev.ua}

\bigskip

\textsc{Daniel Temesvari:} Fakult\"at f\"ur Mathematik, Ruhr-Universit\"at Bochum\\
\textit{E-mail}: \texttt{daniel.temesvari@rub.de}

\bigskip

\textsc{Christoph Th\"ale:} Fakult\"at f\"ur Mathematik, Ruhr-Universit\"at Bochum\\
\textit{E-mail}: \texttt{christoph.thaele@rub.de}

\end{document}